\email{hasannas@math.tu-berlin.de}}
\begin{document}
\thispagestyle{plain}
\TitleHeader

\section{Introduction}
\label{sec:introduction}

Phase retrieval is an ill-posed inverse problem consisting in the
recovery of signals or images from phaseless measurements like the
magnitude of the Fourier transform or the absolute values of inner
products with respect to given sampling vectors. Phaseless
reconstructions appear naturally in many applications like X-ray
crystallography \cite{Hau91,KH91,millane1990phase}, astronomy
\cite{BS79,fienup1987phase}, laser optics \cite{SSD+06,SST04} and
audio processing \cite{dell2000, flan66, laro99}. The mathematical
analysis of this ill-posed problem has been studied intensively during
the last decades, see for instance
\cite{ADGY19,BP15,BP20,BBE17,BS79,grohs2019mathematics,HHLO83,KK14,KST95,SECC+15,
  alaifari2021stability} and references therein.

In this paper, we consider phase retrieval in the context of dynamical
sampling.  Dynamical sampling is a novel research direction motivated
by the work of Vetterli \etal\ \cite{LV09,RCLV11} and was introduced
in \cite{ACMT17,AHP19,AK16,AP17}. The topic instantly attracted
attention in the scientific community, see for instance
\cite{cabrelli2020dynamical, christensen2020frame,
  christensen2019frame, martin2021continuous, philipp2017bessel,
  aceska2017scalability, ulanovskii2021reconstruction,
  martin2021continuous, AGH+20, tang2017} for further
studies. Formulated in the setting of finite-dimensional spaces, the
main question in dynamical sampling is to find conditions on the
system $\Mat A \in \BC^{d \times d}$ and the sampling vectors
$\{\Vek\phi_i\}_{i=0}^{J-1} \subset \BC^d$ such that each signal
$\Vek x\in \BC^d$ can be stably recovered from the spatiotemporal
samples
\begin{equation*}
  \bigl\{ \langle \Vek x , \Mat A^\ell\Vek\phi_i\rangle
  \bigr\}_{\ell,i=0}^{L-1,J-1}
\end{equation*}
or such that $\{\Mat A^\ell \Vek \phi_i\}_{\ell,i=0}^{L-1,J-1}$ forms
a frame for some $L, J \in \BN$ .  Note that many structured
measurements like the discrete Gabor transform may be interpreted as
dynamical samples.  For the Gabor transform, $\Mat A$ would be a
diagonal matrix corresponding to the modulation operator, and
$\Vek \phi_i$ would be shifts of a window function.  We refer to
\cite{ACMT17,AK16} for motivations about this particular question.

Different from the classical finite-dimensional dynamical sampling, we
consider the phaseless measurements
\begin{equation*}
  \bigl\{
  \absn{\iProdn{\Vek x}{\Mat A^\ell \Vek \phi_i}}^2
  \bigr\}_{\ell,i=0}^{L-1,J-1}
\end{equation*}
for some $L, J\in\BN$. The main question is again: under which
conditions on $\Mat A$ and $\Vek \phi_i$ can $\Vek x$ be recovered
from the given measurements.  Due to the loss of the phase, this
problem becomes far more challenging since the recovery is now
severely ill posed in advance.

\paragraph{Relation to existing literature}

Phase retrieval in dynamical sampling has already been studied.  In
\cite{aldroubi2020phaseless,aldroubi2017phase}, the authors pose
conditions on the operator $\Mat A$ defined on a real Hilbert space
and on the sampling vectors $\Vek\phi_i$ to ensure that the dynamical
phase retrieval problem has a unique solution.  The main strategy is
here to ensure that the sequence
$\{\Mat A^\ell \Vek\phi_i\}_{\ell=i=0}^{L_i-1,J-1}$ has the
complementary property meaning that each subset or its complement
spans the entire space.  The restriction to the real-valued problem is
crucial since the complementary property is not sufficient to allow
phase retrieval in the complex case.  Further, the results are of a
theoretical nature, and the question how to recover the signal
numerical remains open.  

An approach for a numerical recovery procedure based on polarization
identities has been considered in \cite{BH21}, where the measurement
vectors $\Vek \phi_i$ have been designed to allow phase
retrieval.  The key idea has been to consider interfering measurement
vectors that allow the recovery of the missing phase by polarization
such that we obtain a classical dynamical sampling problem, which can
be solved in a second step.  The presented reconstruction technique
works for almost all real or complex signals.

\paragraph{Contributions}

Besides the recovery of the real or complex signal $\Vek x$, we want
to recover the unknown operator $\Mat A$ from a certain class in
advance.  For instance, if the operator
$\Mat A \coloneqq \Circ \Vek a$ corresponds to the convolution with
$\Vek a$, we want to recover the signal $\Vek a$ or the spectrum
$\hat{\Vek a}$ of $\Mat A$, where $\hat\cdot$ denotes the discrete
Fourier transform.  The theoretical requirements to allow phase
retrieval besides system identification is our main contribution and
focus of this paper.  The combination of phase retrieval, dynamical
sampling, and system identification is to our knowledge a
new research topic.  Our work horse to establish the recovery
guarantees for phase and system is Prony's method, which allow us to
recover the wanted entities from the given measurements.  As a result,
all our proofs contain analytic recovery methods.  The required
assumptions are satisfied by almost all signals, spectra, and sampling
vectors.  Using several sampling vectors, phase retrieval and system
identification is possible from only linearly many samples.  The basic
idea here generalizes to the infinite-dimensional setting.  Moreover,
we study the sensitivity of the applied Prony method resulting in
error bounds that are interesting by their own outside the context of
dynamical sampling.  On this basis, we moreover study the sensitivity
of the proposed analytic recovery procedures.
 
\paragraph{Roadmap}

This paper is organized as follows. In Section
\ref{sec:preliminary-notes}, we introduce the required notations.  In
Section \ref{sec:appr-prony-meth}, we recall Prony's method, and we
explain how this method enables us to recover the missing
information. In Section \ref{sec:excl-phase-retr}, we provide
conditions to retrieve an unknown signal when the underlying dynamical
frame is known. Section \ref{sec:excl-syst-ident} is devoted to the
system identification in case that the signal $\Vek x$ is already
known.  In Section~\ref{sec:simult-phase-system}, we suppose that both
the signal and the spectrum of $\Mat A$ are unknown.  In particular,
we establish recovery guarantees when the operator $\Mat A$
corresponds to a convolution with a low-pass filter as kernel.  In
Section \ref{sec:multi-samp-vec}, we consider multiple sampling
vectors, which finally allow us to recover both -- signal and
operator. In Section \ref{sec:infinite-dim}, we adapt our results to
the infinite-dimensional setting.  The sensitivity of the analytic
reconstructions is investigated in Section \ref{sec:sens-analysis}. In
Section \ref{sec:numerical-examples}, we provide numerical examples to
accompany our theoretical results. Section \ref{sec:conclusion}
concludes the paper with a number of final remarks.

\section{Preliminary notes}
\label{sec:preliminary-notes}

In this section, we introduce the notations and definitions that are
needed throughout this paper.  All finite-dimensional vectors and
matrices are stated in bold print.  The zero matrix of dimension
$L\times K$ is denoted by $\Mat 0 \coloneqq \Mat 0_{L,K}$ and the
($d \times d$)-dimensional identity by $\Mat I \coloneqq \Mat I_d$.
If the dimension is clear within the context, we usually skip the
indices.

A matrix $\Mat A \in \BC^{d\times d}$ is called diagonalizable if
there exist an invertible matrix $\Mat S$, whose columns consists of
eigenvectors of $\Mat A$, and a diagonal matrix $\Mat \Lambda$ with
the eigenvalues of $\Mat A$ on its diagonal, such that
$\Mat A = \Mat S \Mat \Lambda \Mat S^{-1}$.  Throughout the paper, we
always use this eigenvalue decomposition, where $\Mat S$ does not have
to be orthogonal implying that the columns of $\Mat S$ only form a
(maybe non-orthogonal) basis.  Further, if the eigenvalues are
pairwise distinct, we say that a given vector $\Vek\phi\in\BC^d$
\emph{depends on all eigenspaces} of $\Mat A$ if $\Mat S^{-1}\Vek\phi$
does not vanish anywhere, \ie\ if all coordinate to the basis in
$\Mat S$ are non-zero.  Note that in this case $\Mat S$ is unique up
to permutation and global phase of the columns.

For $\Vek a \in \BC^d$, we denote by $\Circ(\Vek a)$ the circulant matrix
whose first column is $\Vek a$. Note that the
multiplication with $\Circ(\Vek a)$ results in the convolution with
$\Vek a$, \ie\ $\Circ(\Vek a) \, \Vek x = \Vek a * \Vek x$.  All
circulant matrices are diagonalizable with respect to the discrete
Fourier transform.  More precisely, we have
$\Circ(\Vek a)= \nicefrac{1}{d} \, \Mat F \diag(\hat{\Vek a}) \, \Mat
F^{-1}$, where
$\Mat F = (\e^{\nicefrac{-2\uppi \I j k}{d}})_{j,k=0}^{d-1}$ denotes
the Fourier matrix and $\hat{\Vek a} \coloneqq \Mat F \Vek a$ the
discrete Fourier transform.

Given a vector $\Vek\beta \in \BC^{K}$ and $L\in \BN$, we define the
rectangular Vandermonde matrix $\Mat V_L\in \BC^{L\times K}$ by
\begin{equation*}
  \Mat V_L \coloneqq \Mat V_L(\Vek \beta) \coloneqq
  (\beta_k^\ell)_{\ell,k = 0}^{L-1,K-1}.
  \addmathskip
\end{equation*}
For $L=K$, we drop the subscript and denote the Vandermonde
matrix by $\Mat V$ or $\Mat V(\Vek\beta)$.

Recall that the finite-dimensional $p$-norm is defined as
\begin{equation*}
  \| \Vek x\|_p
  = \biggl(\sum_{k=0}^{d-1} | x_k |^p \biggr)^{\nicefrac1p}
  \qquad \text{for}\qquad
  \Vek x\in \BC^d
  \quad\text{and}\quad
  p \in [1,\infty).
\end{equation*}
Moreover, the maximum norm is defined by
$ \| \Vek x\|_{\infty} = \max_k | x_k |$.  Against this background and
for notational convenience, we define the minimum norm
$ \| \Vek x \|_{-\infty} =\min_k |x_k |$ although this expression is
clearly no norm.

The non-zero complex numbers are denote by $\BC_*$.  Without loss of
generality, we always choose the phase $\arg(\cdot)$ of a complex
number within the interval $[-\uppi, \uppi)$.  Especially for
calculations with phases, we denote by $\cdot \Mod 2\uppi$ the
remainder within $[-\uppi, \uppi)$, \ie\ we add or subtract a multiple
of $2\uppi$ to obtain an number in the considered interval.

For a given vector $\Vek x=(x_0,\dots,x_{d-1})$, we call the set of
relative phases $\arg(x_j \bar x_k)$ the \textit{winding direction} of
$\Vek x$.  Figuratively, the winding direction describes how the phase
is changing by traveling through the components of $\Vek x$.  We say
that a vector $\Vek x$ can be uniquely recovered up to the winding
direction if the relatives phases are only reconstructable up to a
global sign.  If $x_0$ is real, a vector
with the opposite winding direction can be computed by conjugating all
components of $\Vek x$, \ie\ changing the sign of all relative phases.

Finally, we denote by $ \#[ \cdot ]$ the cardinality of a set.

\section{The approximate Prony method}
\label{sec:appr-prony-meth}

In a nutshell, Prony's method \cite{Pro95} allows us to recover the
non-zero coefficients $\eta_k \in \BC_*$ and the
pairwise distinct bases $\beta_k \in \BC_*$ of an
exponential sum
\begin{equation}
  \label{eq:exp-sum}
  f(t) \coloneqq \sum_{k=0}^{K-1} \eta_k \, \beta_k^t
\end{equation}
from the equispaced sampled data $h_\ell \coloneqq f(\ell)$ with
$\ell = 0, \dots, 2K-1$.  The so-called Prony polynomial
$P \colon \BC \to \BC $ is the monic polynomial whose zeros are the
unknown bases, \ie\
$P(z) \coloneqq \sum_{k=0}^K \gamma_k z^k = \prod_{k=0}^{K-1} (z -
\beta_k)$ with $\gamma_K = 1$.  Considering the linear equations
\begin{equation}
  \label{eq:hankel-prony}
  \sum_{k=0}^K \gamma_k \, h_{\ell + k} =
  \sum_{j=0}^{K-1} \eta_j \beta_j^\ell \, P(\beta_j) = 0, \qquad \ell = 0,
  \dots, K-1,
\end{equation}
one may calculate the coefficients $\gamma_k$ of the Prony polynomial
by solving a linear equation system.  Knowing the Prony polynomial, we
may extract the unknown bases $\beta_k$ via its roots.  The
coefficients $\eta_k$ of the exponential sum are determined by an
over-determined linear equation system.  To improve the numerical
performance, the number of measurements may be increased
\cite{BM05,PDV05,PT10}.  On the basis of the rectangular Hankel matrix
\begin{equation}
  \label{eq:hankel}
  \Mat H \coloneqq
  \bigl( h_{\ell + k} \bigr)_{\ell,k = 0}^{L-K-1, K}
  \qquad\text{with}\qquad
  L \ge 2K,
\end{equation}
the coefficients of the Prony polynomial are determined by the
kernel of $\Mat H$.  

\begin{lemma}\label{lem:singular-value-and-roots}
  For the exact samples $h_\ell$ with $\ell = 0, \dots, L-1$, the
  rectangular Hankel matrix \eqref{eq:hankel} is of rank $K$, and the
  following assertions are equivalent:
  \begin{enumerate}[\upshape(i),nosep]
  \item the polynomial $P(z) \coloneqq \sum_{\ell=0}^K \gamma_\ell
    z^\ell$ has the $K$ distinct roots
    $\beta_0, \dots, \beta_{K-1}$,
  \item the vector $\Vek \gamma \coloneqq (\gamma_\ell)_{\ell = 0}^K$
    spans $\ker(\Mat H)$, \ie\ $\Mat H \Vek \gamma
    = \Vek 0$.
  \end{enumerate}
\end{lemma}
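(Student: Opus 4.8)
The plan is to exploit the interplay of the Hankel structure of $\Mat H$ with the exponential form of the samples to obtain a Vandermonde factorization, from which both the rank and the kernel can be read off directly. Expanding a single entry gives $h_{\ell+k} = \sum_{j=0}^{K-1}\eta_j\beta_j^{\ell+k} = \sum_{j=0}^{K-1}\beta_j^\ell\,\eta_j\,\beta_j^k$, which I recognize as the matrix product
\begin{equation*}
  \Mat H = \Mat V_{L-K}(\Vek\beta)\,\Mat D\,\Mat V_{K+1}(\Vek\beta)^\top,
\end{equation*}
with $\Vek\beta \coloneqq (\beta_0,\dots,\beta_{K-1})$, the diagonal matrix $\Mat D \coloneqq \diag(\eta_0,\dots,\eta_{K-1}) \in \BC^{K\times K}$, and the rectangular Vandermonde matrices $\Mat V_{L-K}(\Vek\beta)\in\BC^{(L-K)\times K}$ and $\Mat V_{K+1}(\Vek\beta)\in\BC^{(K+1)\times K}$ from Section~\ref{sec:preliminary-notes}. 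This reduces the entire statement to properties of the three factors.

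For the rank, I would invoke the standard fact that a rectangular Vandermonde matrix with pairwise distinct nodes and at least as many rows as columns has full column rank $K$, since any $K$ of its rows form a square Vandermonde matrix with non-vanishing determinant $\prod_{i<j}(\beta_j-\beta_i)$. Because $L\ge 2K$ forces $L-K\ge K$, the factor $\Mat V_{L-K}(\Vek\beta)$ has full column rank $K$, and since $K+1>K$ the factor $\Mat V_{K+1}(\Vek\beta)$ has full column rank $K$ as well, so its transpose has full row rank $K$. As $\Mat D$ is invertible, $\Mat V_{L-K}(\Vek\beta)\,\Mat D$ is still injective; since multiplication by an injective matrix from the left preserves the rank, $\Mat H$ has rank $K$, as claimed.

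To obtain the equivalence, I note that $\Mat H$ has $K+1$ columns and rank $K$, so by rank--nullity $\ker\Mat H$ is one-dimensional; a vector $\Vek\gamma$ therefore spans $\ker\Mat H$ exactly when $\Vek\gamma\neq\Vek0$ and $\Mat H\Vek\gamma=\Vek0$. Using $(\Mat V_{K+1}(\Vek\beta)^\top\Vek\gamma)_j = \sum_{k=0}^{K}\gamma_k\beta_j^k = P(\beta_j)$, the factorization yields $\Mat H\Vek\gamma = \Mat V_{L-K}(\Vek\beta)\,\Mat D\,(P(\beta_j))_{j=0}^{K-1}$, and since $\Mat V_{L-K}(\Vek\beta)\,\Mat D$ is injective, $\Mat H\Vek\gamma=\Vek0$ is equivalent to $P(\beta_j)=0$ for every $j$. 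For (i)$\Rightarrow$(ii), a polynomial of degree at most $K$ with $K$ distinct roots is necessarily nonzero, so $\Vek\gamma\neq\Vek0$ lies in the one-dimensional kernel and hence spans it. For (ii)$\Rightarrow$(i), $\Vek\gamma$ spanning the kernel gives $P(\beta_j)=0$ for all $j$, i.e.\ $K$ distinct roots; since $\Vek\gamma\neq\Vek0$ makes $P$ nonzero of degree at most $K$, these are precisely its $K$ roots.

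I expect the only genuinely technical ingredient to be the full-rank behaviour of the rectangular Vandermonde factors; once that is in place, everything else is bookkeeping around the factorization. It is worth emphasising that the hypothesis $L\ge 2K$ enters exactly to supply the $L-K\ge K$ rows that make $\Mat V_{L-K}(\Vek\beta)$ injective, which is what pins the rank of $\Mat H$ at $K$ and forces the kernel to be one-dimensional.
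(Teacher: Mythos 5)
Your proposal is correct and follows essentially the same route as the paper: the factorization $\Mat H = \Mat V_{L-K}(\Vek\beta)\diag(\Vek\eta)\Mat V_{K+1}^\T(\Vek\beta)$, the full-rank argument for $\rank\Mat H = K$, and the identity $\Mat H\Vek\gamma = \Mat V_{L-K}(\Vek\beta)(\eta_j P(\beta_j))_{j=0}^{K-1}$ giving the equivalence. You simply spell out the rank--nullity step and the two directions of the equivalence in more detail than the paper does.
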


\begin{proof}
  With $\Vek \eta \coloneqq (\eta_k)_{k=0}^{K-1}$ and
  $\Vek \beta \coloneqq ( \beta_k)_{k=0}^{K-1}$, we may factorize the
  Hankel matrix \eqref{eq:hankel} into
  \begin{equation*}
    \Mat H = \Vek V_{L-K}(\Vek \beta) \diag(\Vek \eta) \, \Mat
    V_{K+1}^\T(\Vek \beta).
  \end{equation*}
  Since the occurring Vandermonde and diagonal matrices have full
  rank, we have\linebreak $\rank \Mat H = K$ meaning $\dim(\ker(\Mat H)) = 1$.
  Thus, $\Mat H$ possesses the simple singular value zero.
  Considering \eqref{eq:hankel-prony} for $\ell = 0, \dots, L-K-1$,
  we obtain
  \begin{equation*}
    \Mat H \Vek \gamma = \Vek V_{L-K}(\Vek \beta) \, \bigl( \eta_j \,
    P(\beta_j) \bigr)_{j=0}^{K-1}.
  \end{equation*}
  Since the Vandermonde matrix $\Mat V_{L-K}$ has full rank due to
  the assumptions on \eqref{eq:exp-sum}, the equivalence follows
  immediately.  \qed
\end{proof}

\thref{lem:singular-value-and-roots} is the theoretical justification
why Prony's method always yields the parameters of \eqref{eq:exp-sum}
for exact measurements $h_\ell$.  In practice, the measurements
$\tilde h_\ell \coloneqq h_\ell + e_\ell$ are disturbed by some small
error $e_\ell$; so we have only access to the disturbed rectangular
Hankel matrix
\begin{equation}
  \label{eq:dist-hankel}
  \tilde{\Mat H} \coloneqq \Mat H + \Mat E
  = \bigl( h_{\ell + k} + e_{\ell + k} \bigr)_{\ell,k = 0}^{L-K-1, K}
  \qquad\text{with}\qquad
  L \ge 2K,
\end{equation}
where $\Mat E \coloneqq (e_{\ell + k})_{\ell,k=0}^{L-K-1,K}$ is the
rectangular error Hankel matrix.  If $L > 2K$, the kernel of the
perturbed Hankel matrix $\tilde{\Mat H}$ will be trivial almost
surely.  For this reason, Potts \& Tasche \cite{PT10} suppose to
approximate the kernel using the singular value decomposition.  This
approach is supported by the Lidskii--Weyl perturbation theorem for
singular values, see \cite[Prob~III.6.13]{Bha97} or \cite{LM99},
yielding
\begin{equation}
  \label{eq:bound-sing-val}
  \max_{k=0,\dots,K}
  \absn{\sigma_{k}(\tilde{\Mat H}) - \sigma_{k}(\Mat H)}
  \le \pNormn{\tilde{\Mat H} - \Mat H}_2
  \le \pNormn{\Mat E}_2.
\end{equation}
If the non-zero singular values of $\Mat H$ are greater than
$2 \pNormn{\Mat E}$, the singular vector to the smallest singular
value of $\tilde{\Mat H}$ seems to be a valid approximation for
$\Vek \gamma$.  Summarized, we obtain the so-called approximate Prony
method \cite[Alg~3.3]{PT10} here written down for complex exponential
sums.

\begin{algorithm}[Approximate Prony method]
  \label{alg:app-prony}
  \emph{Input}:
  $\tilde{\Vek h} \coloneqq (\tilde h_\ell)_{\ell=0}^{L-1} \in
  \BC^{L}$ with $L > 2K$.
  \begin{enumerate}[(i),nosep]
  \item Compute the right singular vector $\tilde{\Vek \gamma}$ to the smallest
    singular value $\sigma_K$ of $\tilde{\Mat H}$.
  \item Determine the roots
    $\tilde{\Vek \beta} \coloneqq (\tilde{\beta}_k)_{k=0}^{K-1}$ of
    $\tilde{P}(z) = \sum_{k=0}^K \tilde{\gamma}_k z^k$.
  \item Compute the least-squares solution of $\Mat V_{L}(\tilde{\Vek \beta})
    \, \tilde{\Vek \eta }= \tilde{\Vek h}$.
  \end{enumerate}
  \emph{Output}: $\tilde{\Vek \eta} \in \BC^K$, $\tilde{\Vek \beta} \in \BC^K$.
\end{algorithm}

Finally, we would like to note that alternative methods to obtain
unknown bases from the exponential sum in \eqref{eq:exp-sum} can be
employed, for instance matrix pencil methods
\cite{hua1990matrix,hua1991svd}, ESPRIT estimation methods
\cite{roy1986esprit}, and Cadzow denoising method
\cite{cadzow1988signal}.

\section{Exclusive phase retrieval}
\label{sec:excl-phase-retr}

In the following, we assume that $\Mat A \in \BC^{d \times d}$ is
diagonalizable, \ie\ $\Mat A = \Mat S \Mat \Lambda \Mat S^{-1}$.  For
a fixed signal $\Vek x \in \BC^d$ and a fixed sampling vector
$\Vek \phi \in \BC^d$, the given phaseless measurements are then of
the form
\begin{equation}
  \label{eq:phaseless-meas}
  \absn{\iProdn{\Vek x}{\Mat A^\ell \Vek \phi}}^2
  = \absn{\iProdn{\Vek y}{\Mat \Lambda^\ell \Vek \psi}}^2
  = \absbb{ \sum_{k=0}^{d-1}  \lambda_k^\ell
    \underbracket{\bar y_k \psi_k }_{\eqqcolon c_k}}^2
  = \sum_{j,k =0}^{d-1} c_j \bar c_k \, (\lambda_j \bar \lambda_k)^\ell,
\end{equation}
where $\Vek y \coloneqq \Mat S^* \Vek x$ and
$\Vek \psi \coloneqq \Mat S^{-1} \Vek \phi$.  Notice that
\eqref{eq:phaseless-meas} is an exponential sum with coefficients
$c_j \bar c_k$ and bases $\lambda_j \bar\lambda_k$.  In the following,
we require that the exponential sum has exactly $d^2$ unique bases.
Therefore, we call $M \coloneqq \{\mu_0, \dots, \mu_{d-1}\} \subset \BC$,
\begin{itemize}
\item \emph{collision-free} if the products $\mu_j \bar \mu_k$
  are pairwise distinct for $j,k \in \{0, \dots, d-1\}$.
\item \emph{absolutely collision-free} if $M$ is collision-free and if
  the products
  $\absn{\mu_j}\absn{\mu_k}$ are pairwise distinct for $j > k$.
\end{itemize}
Note that a matrix with collision-free eigenvalues is always
invertible, and that the eigenvalue decomposition becomes unique up to
permutations and global phases of the columns of $\Mat S$.
If the system or the matrix $\Mat A$ is known, we can usually recover
the signal $\Vek x$ using one sampling vector $\Vek \phi$.

\begin{theorem}
  \label{thm:rec-full}
  Let $\Mat A \in \BC^{d \times d}$ be known and diagonalizable with
  collision-free eigenvalues, and let $\Vek \phi \in \BC^d$ depend on
  all eigenspaces of $\Mat A$.  Then every $\Vek x \in \BC^d$ can be
  recovered from the samples
  $\{ \absn{\iProdn{\Vek x}{\Mat A^\ell \Vek \phi}} \}_{\ell
    =0}^{d^2-1}$ up to global phase.
\end{theorem}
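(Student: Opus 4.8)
The plan is to exploit the fact that, since $\Mat A$ is known, so are all bases $\lambda_j \bar\lambda_k$ of the exponential sum in \eqref{eq:phaseless-meas}; hence only the coefficients $c_j \bar c_k$ remain to be determined, which is a purely linear problem and explains why $d^2$ samples suffice (rather than the $2d^2$ of a full Prony recovery). Concretely, I would first square the data and rewrite it by \eqref{eq:phaseless-meas} as $\absn{\iProdn{\Vek x}{\Mat A^\ell \Vek \phi}}^2 = \sum_{j,k=0}^{d-1} c_j \bar c_k\,(\lambda_j \bar\lambda_k)^\ell$, where $c_k = \bar y_k \psi_k$ with $\Vek y = \Mat S^* \Vek x$ and $\Vek \psi = \Mat S^{-1} \Vek \phi$. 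Because the eigenvalues are collision-free, the $d^2$ products $\lambda_j \bar\lambda_k$ are pairwise distinct; enumerating them as a vector $\Vek \beta \in \BC^{d^2}$ and the associated coefficients as $\Vek \eta \in \BC^{d^2}$, the samples for $\ell = 0, \dots, d^2 - 1$ constitute the square Vandermonde system $\Mat V(\Vek \beta)^\T \Vek \eta = (\absn{\iProdn{\Vek x}{\Mat A^\ell \Vek \phi}}^2)_{\ell=0}^{d^2-1}$. Since a Vandermonde matrix with pairwise distinct nodes is invertible, solving this system recovers every coefficient $c_j \bar c_k$. In the language of the previous section, knowing $\Mat A$ renders the base-finding step of Prony's method superfluous and leaves only its linear coefficient-determination step.

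Next I would assemble the recovered numbers into the Hermitian rank-one matrix $\Vek c\, \Vek c^* = (c_j \bar c_k)_{j,k=0}^{d-1}$ and recover $\Vek c$ up to a global unimodular factor. If $\Vek x \neq \Vek 0$, then $\Vek c \neq \Vek 0$, so there is an index $j_0$ with $c_{j_0} \neq 0$; fixing the global phase so that $c_{j_0} > 0$ yields $c_k = (c_k \bar c_{j_0})/c_{j_0}$ for every $k$, where the entries with $c_k = 0$ are returned as zero automatically. Finally, since $\Vek \phi$ depends on all eigenspaces of $\Mat A$, we have $\psi_k \neq 0$ for all $k$, so $\bar y_k = c_k / \psi_k$ determines $\Vek y$ up to global phase, and $\Vek x = (\Mat S^*)^{-1} \Vek y$ up to global phase, as $\Mat S$ and $\Mat \Lambda$ are known once $\Mat A$ is fixed.

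The main point requiring care is the bookkeeping between the two index sets: the Vandermonde step returns exactly one coefficient per distinct base, so I must verify that $(j,k) \mapsto \lambda_j \bar\lambda_k$ is a bijection onto the $d^2$ bases. This is precisely what the collision-free hypothesis provides, and it is what keeps the Vandermonde system square and nonsingular. A secondary subtlety is the vanishing-coefficient case: one must confirm that the rank-one structure of $\Vek c\, \Vek c^*$ still pins down $\Vek c$ (hence $\Vek x$) up to a single global phase even when some $c_k = 0$, which the argument above handles by anchoring the phase at a nonzero entry $c_{j_0}$.
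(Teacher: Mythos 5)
Your proposal is correct and follows essentially the same route as the paper: reduce to the exponential sum \eqref{eq:phaseless-meas}, solve the square Vandermonde system for the coefficients $c_j\bar c_k$ (which is possible because the bases are known and pairwise distinct by collision-freedom), extract $\Vek c$ up to global phase from the rank-one data, and invert via $\psi_k\ne 0$ and $\Mat S$. Your explicit anchoring of the phase at a nonzero entry $c_{j_0}$ to handle possibly vanishing coefficients $c_k=\bar y_k\psi_k$ is a small but welcome refinement over the paper's brief wording.
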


\begin{proof}
  Assume that $\Mat A$ has the eigenvalue decomposition
  $\Mat A = \Mat S \Mat \Lambda \Mat S^{-1}$, and denote the
  coordinates of $\Vek \phi$ with respect to $\Mat S$ by
  $\Vek \psi \coloneqq \Mat S^{-1} \Vek \phi$.  The given measurements
  have the form
  \begin{equation*}
    \absn{\iProdn{\Vek x}{\Mat A^\ell \Vek \phi}}^2
    = \sum_{j,k =0}^{d-1} c_j \bar c_k \, (\lambda_j \bar \lambda_k)^\ell
  \end{equation*}
  with $c_k = \bar y_k \psi_k$ as shown in \eqref{eq:phaseless-meas}.
  Due to the distinctness of the products $\lambda_j \bar \lambda_k$,
  the coefficients $c_j \bar c_k$ may be calculated by solving a
  linear equation system based on an invertible Vandermonde matrix.
  The products $c_j \bar c_k$ contain the absolute values $\absn{c_k}$
  and the relative phases $\arg(c_j \bar c_k)$; so the
  factors $c_k$ are determined up to global phase.  Since the
  components of $\Vek \psi$ are non-zero, and since $\Mat S$ is
  invertible, we finally obtain $\Vek x$ up to global phase.  \qed
\end{proof}

\begin{corollary}
  \label{cor:rec-full}
  For almost all $\Vek a \in \BC^d$ and almost all $\Vek \phi \in
  \BC^d$, the signal $\Vek x \in \BC^d$ can be recovered from the
  samples $\{ \absn{\iProdn{\Vek x}{(\Circ \Vek a)^\ell \Vek \phi}}
  \}_{\ell = 0}^{d^2-1}$ up to global phase.
\end{corollary}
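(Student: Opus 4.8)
The plan is to deduce this corollary from Theorem~\ref{thm:rec-full} by verifying that the two hypotheses of the theorem — namely that $\Circ\Vek a$ has collision-free eigenvalues, and that $\Vek\phi$ depends on all eigenspaces of $\Circ\Vek a$ — hold for almost all $\Vek a$ and almost all $\Vek\phi$ in $\BC^d$. Since $\Circ\Vek a$ is diagonalizable for every $\Vek a$ (it is diagonalized by the Fourier matrix $\Mat F$, as recalled in Section~\ref{sec:preliminary-notes}), the only obstacles to applying Theorem~\ref{thm:rec-full} are these two genericity conditions, and "almost all" is to be understood in the sense of Lebesgue measure, so I will show that the exceptional sets are contained in the zero sets of finitely many non-trivial polynomials.

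I would first treat the collision-free condition. The eigenvalues of $\Circ\Vek a$ are exactly the entries of the spectrum $\hat{\Vek a} = \Mat F\Vek a$, so the eigenvalues are $\lambda_k = \hat a_k$, which depend linearly on $\Vek a$. Collision-freeness requires the products $\lambda_j\bar\lambda_k$ to be pairwise distinct; each coincidence $\lambda_j\bar\lambda_k = \lambda_{j'}\bar\lambda_{k'}$ is a polynomial condition in the entries of $\hat{\Vek a}$ (and their conjugates), hence a non-trivial real-analytic condition in the real and imaginary parts of $\Vek a$. The key step here is to exhibit a single $\Vek a$ for which all these products are genuinely distinct, so that none of the finitely many difference polynomials vanishes identically; the zero set of a non-trivial real-analytic function has Lebesgue measure zero, so the union over the finitely many index pairs is still a null set. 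A convenient witness is to choose $\Vek a$ so that $\hat{\Vek a}$ has entries with pairwise distinct moduli and pairwise incommensurable arguments, which forces the products $\lambda_j\bar\lambda_k$ apart.

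Next I would handle the dependence on all eigenspaces. With $\Mat S = \nicefrac{1}{d}\,\Mat F$ (up to the normalization in the circulant decomposition), the condition that $\Vek\phi$ depends on all eigenspaces is that $\Mat S^{-1}\Vek\phi = \Mat F^{-1}\Vek\phi$ has no vanishing component, i.e. $(\Mat F^{-1}\Vek\phi)_k \ne 0$ for every $k$. Each such component is a non-trivial linear functional of $\Vek\phi$, so its vanishing defines a proper (hence measure-zero) affine subspace of $\BC^d$; the union over the $d$ components is again a null set. Since this second condition involves $\Vek\phi$ only and the first involves $\Vek a$ only, the exceptional set in the product space $\BC^d\times\BC^d$ is contained in a finite union of products of null sets with full spaces, and is therefore a null set by Fubini. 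The main obstacle is the first step — verifying that the collision-free condition is not vacuously violated — since one must confirm that the defining polynomials do not vanish identically; once a single witnessing $\Vek a$ is produced, the remainder is the standard argument that a finite union of proper analytic zero sets is Lebesgue-null, and the conclusion follows by invoking Theorem~\ref{thm:rec-full} pointwise on the complement.
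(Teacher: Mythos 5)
Your proposal is correct and follows essentially the same route as the paper: reduce to Theorem~\ref{thm:rec-full} and observe that the collision-free condition on $\hat{\Vek a}$ and the non-vanishing of all components of $\Mat F^{-1}\Vek \phi$ each fail only on a Lebesgue-null set. The paper's proof is a two-line version of exactly this argument; your extra care in exhibiting a witness $\Vek a$ so that the difference polynomials are not identically zero merely fills in a detail the paper leaves implicit.
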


\begin{proof}
  The eigenvalues of $\Mat A\coloneqq \Circ \Vek a$ are just given by the discrete
  Fourier transform $\hat{\Vek a}$, and for almost all vectors $\Vek a \in
  \BC^d$ or, equivalently, $\hat{\Vek a} \in \BC^d$, the products
  $\hat a_j \bar{\hat a}_k$ are pairwise distinct.  Further, the
  vectors $\Vek \phi$ that are orthogonal to one column of the Fourier
  matrix form a hyperplane.  \qed
\end{proof}

We would like to note that phase retrieval from the sample
$\{ \absn{\iProdn{\Vek x}{(\Circ \Vek a)^\ell \Vek \phi_i}} \}_{\ell,i
  = 0}^{L-1,J-1}$ is possible with much less than $d^2$ temporal
measurement if more spatial measurement vectors $\Vek \phi_i$ and
polarization techniques are employed \cite{BH21}.

\section{Exclusive system identification}
\label{sec:excl-syst-ident}

The other way round, if the signal $\Vek x$ is known, then we can
usually identity the eigenvalues of the system
$\Mat A = \Mat S \Mat \Lambda \Mat S^{-1}$, \ie\ we assume that the
eigenvectors $\Mat S$ of the system are known.  For a convolutional
systems $\Mat A = \Circ \Vek a$, the eigenvectors are just the columns
of the Fourier matrix for instance.

\begin{theorem}
  \label{thm:sys-ident}
  Let $\Mat A = \Mat S \Mat \Lambda \Mat S^{-1}$ be diagonalizable by a known eigenvector basis $\Mat S$ and assume that the eigenvalues  are  collision-free.
  Let $\Vek \phi \in \BC^d$ depend on all eigenspaces of $\Mat A$, and
  let $\Vek x \in \BC^d$ be given.  If the coefficients $c_k$ defined
  in \eqref{eq:phaseless-meas} are collision-free too, then the
  eigenvalues $\lambda_0,\dots, \lambda_{d-1}$ of $\Mat A$ are defined
  by the samples
  $\{\absn{\iProdn{\Vek x}{\Mat A^\ell \Vek \phi}}\}_{\ell =
    0}^{2d^2-1}$ up to global phase.
\end{theorem}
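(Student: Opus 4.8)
The plan is to read the phaseless data as an exponential sum, recover its bases with Prony's method, and then separate the individual eigenvalues from their pairwise products by exploiting that all coefficients are computable.

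First I would square the given magnitudes and use \eqref{eq:phaseless-meas} to write
\[
  \absn{\iProdn{\Vek x}{\Mat A^\ell \Vek \phi}}^2
  = \sum_{j,k=0}^{d-1} c_j \bar c_k \,(\lambda_j \bar \lambda_k)^\ell,
\]
an exponential sum whose bases are the products $\lambda_j \bar \lambda_k$ and whose coefficients are $c_j \bar c_k$. Because the eigenvalues are collision-free, the $d^2$ bases are pairwise distinct, so no terms merge and the sum genuinely has $K = d^2$ summands. The samples for $\ell = 0, \dots, 2d^2-1$ are exactly the $2K$ equispaced values required by Prony's method; applying \thref{lem:singular-value-and-roots} to the associated Hankel matrix, I recover the full unlabeled collection of pairs $\{(\lambda_j \bar \lambda_k,\, c_j \bar c_k)\}_{j,k=0}^{d-1}$ of base and coefficient.

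The crucial step is to \emph{label} these pairs, i.e.\ to decide for each recovered base which index pair $(j,k)$ produced it. Here I would use that the eigenvector basis $\Mat S$, the signal $\Vek x$, and the sampling vector $\Vek \phi$ are all known, so that $\Vek y = \Mat S^* \Vek x$, $\Vek \psi = \Mat S^{-1} \Vek \phi$, and hence every coefficient $c_k = \bar y_k \psi_k$ can be computed explicitly. I then form all products $c_j \bar c_k$; since the $c_k$ are collision-free, these products are pairwise distinct, so matching each Prony-recovered coefficient against this precomputed table uniquely identifies its index pair $(j,k)$ and thereby attaches the label $(j,k)$ to the corresponding base $\lambda_j \bar \lambda_k$.

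Finally I read off the eigenvalues. The diagonal labels $(k,k)$ give $\lambda_k \bar \lambda_k = \absn{\lambda_k}^2$ and thus all moduli, while the labels $(k,0)$ give the products $\lambda_k \bar \lambda_0$. Fixing the global phase by taking $\lambda_0 = \absn{\lambda_0}$ real and positive, I obtain $\lambda_k = (\lambda_k \bar \lambda_0)/\absn{\lambda_0}$ for every $k$, which determines all eigenvalues up to this single global phase, as claimed. I expect the main obstacle to be exactly this labeling step: Prony's method returns the products $\lambda_j \bar \lambda_k$ only as an anonymous set, and without a mechanism to recover the index structure one cannot separate the individual eigenvalues from their pairwise products — it is precisely the collision-freeness of the \emph{known} coefficients $c_k$ that supplies this mechanism.
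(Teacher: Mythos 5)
Your proposal is correct and follows essentially the same route as the paper's proof: rewrite the squared measurements as an exponential sum, recover the bases $\lambda_j\bar\lambda_k$ and coefficients $c_j\bar c_k$ by Prony's method, use the fact that the $c_k$ are explicitly computable and collision-free to attach the index pair $(j,k)$ to each recovered base, and then extract the moduli from the diagonal products and the relative phases from the off-diagonal ones up to a global phase. Your version is in fact slightly more explicit than the paper's about why the labeling step succeeds (pairwise distinctness of the table of products $c_j\bar c_k$) and about the final read-off via the products $\lambda_k\bar\lambda_0$.
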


\begin{proof}
  The measurements again have the form
  \begin{equation*}
    \absn{\iProdn{\Vek x}{\Mat A^\ell \Vek \phi}}^2
    = \sum_{j,k =0}^{d-1} c_j \bar c_k \, (\lambda_j \bar \lambda_k)^\ell
  \end{equation*}
  as shown in \eqref{eq:phaseless-meas}.  By assumption, the bases
  $\lambda_j \bar \lambda_k$ of this exponential sum are pairwise
  distinct and the coefficients $c_k$ are non-zero.  Thus the products $\lambda_j \bar \lambda_k$ and
  $c_j \bar c_k$ are determinable by Prony's method.  Note that
  Prony's method gives only the values but not the corresponding
  indices $j$ and $k$.  Exploiting that the products $c_j \bar c_k$
  are known -- $\Vek x$, $\Vek \phi$, $\Mat S$ are known, we can
  however deduce these indices.  Similarly to the proof of
  \thref{thm:rec-full}, the products $\lambda_j \bar \lambda_k$
  contain the absolute values $\absn{\lambda_k}$ and the relative
  phases $\arg(\lambda_j \bar \lambda_k)$; so the
  eigenvalues $\lambda_k$ are determined up to global phase.  \qed
\end{proof}

\begin{corollary}
  \label{cor:sys-ident}
  For almost all $\Vek x \in \BC^d$ and almost all
  $\Vek \phi \in \BC^d$, almost all kernels $\Vek a \in \BC^d$ can be
  recovered from the samples
  $\{ \absn{\iProdn{\Vek x}{(\Circ \Vek a)^\ell \Vek \phi}} \}_{\ell =
    0}^{2d^2-1}$ up to global phase.
\end{corollary}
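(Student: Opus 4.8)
The statement is the system-identification analogue of \thref{cor:rec-full}, and the plan is to derive it from \thref{thm:sys-ident} by checking that each of that theorem's hypotheses is generic in the circulant family $\Mat A = \Circ \Vek a$. Recall that every circulant matrix is diagonalized by the Fourier matrix, so here $\Mat S = \Mat F$ is known a priori and independent of $\Vek a$, while the eigenvalues are proportional to the entries of $\hat{\Vek a} = \Mat F \Vek a$. A key observation that organizes the whole argument is that the coefficients $c_k = \bar y_k \psi_k$, with $\Vek y = \Mat F^* \Vek x$ and $\Vek \psi = \Mat F^{-1} \Vek \phi$, then depend only on $\Vek x$ and $\Vek \phi$, not on $\Vek a$. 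This cleanly separates the genericity conditions: one on $\Vek a$, the remaining ones on $(\Vek x, \Vek \phi)$.

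First I would dispose of the conditions already treated in \thref{cor:rec-full}. The eigenvalue collision-free property amounts to the products $\hat a_j \bar{\hat a}_k$ being pairwise distinct, and the offending kernels form a finite union of zero sets of non-trivial real-analytic functions of $\Vek a$ and hence a Lebesgue null set. Since $\Mat F$ is invertible, the maps $\Vek x \mapsto \Vek y$ and $\Vek \phi \mapsto \Vek \psi$ are fixed linear bijections preserving null sets, so I may argue directly in the coordinates $\Vek y, \Vek \psi$. The requirement that $\Vek \phi$ depend on all eigenspaces ($\psi_k \neq 0$) together with $c_k \neq 0$ (equivalently $y_k \neq 0$) excludes only finitely many hyperplanes.

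The one genuinely new condition, and the main obstacle, is the collision-freeness of the coefficients $c_k$, that is, that the products $c_j \bar c_k = \bar y_j \psi_j \, y_k \bar \psi_k$ are pairwise distinct. The plan is to regard each difference $c_j \bar c_k - c_{j'} \bar c_{k'}$ over distinct index pairs as a real-analytic function of $(\Vek x, \Vek \phi)$ and to show that none of them vanishes identically; their common zero set is then a null set, and by Fubini the condition holds for almost all $\Vek x$ and almost all $\Vek \phi$. Non-triviality is the only point requiring care, and it is settled by exhibiting a single admissible configuration: since the $c_k$ range freely over $\BC_*$ as $(\Vek y, \Vek \psi)$ vary, one may for instance pick them with pairwise distinct moduli and generic phases so that all $d^2$ products $c_j \bar c_k$ become distinct. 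I expect the remaining measure-zero bookkeeping to be routine.

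Finally, for $\Vek a, \Vek x, \Vek \phi$ outside the excluded null sets, all hypotheses of \thref{thm:sys-ident} are met, so the eigenvalues of $\Circ \Vek a$ are determined up to global phase from the $2d^2$ samples. As these eigenvalues are proportional to $\hat{\Vek a}$, this recovers $\hat{\Vek a}$ and hence $\Vek a = \Mat F^{-1} \hat{\Vek a}$ up to global phase, which is the assertion.
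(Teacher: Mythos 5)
Your proposal is correct and follows essentially the same route as the paper: it reduces the corollary to Theorem~\ref{thm:sys-ident} and checks that each hypothesis fails only on a Lebesgue null set of kernels, signals, and sampling vectors. The paper's own proof merely asserts these genericity facts in three sentences, whereas you additionally supply the (standard) real-analyticity argument for the pairwise distinctness of the products $c_j \bar c_k$; the substance is the same.
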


\begin{proof}
  Again, the vectors $\Vek \phi$ that are orthogonal to one column of
  the Fourier matrix form a hyperplane.  Further, for almost all
  $\Vek \phi$ and $\Vek x$, the products $c_j \bar c_k$ in
  \eqref{eq:phaseless-meas} are pairwise distinct.  As discussed in
  the proof of \thref{cor:rec-full} almost all vectors
  $\Vek a \in \BC^d$ satisfy the assumption of \thref{thm:sys-ident}.
  \qed
\end{proof}

\section{Simultaneous phase \& system identification}
\label{sec:simult-phase-system}

If either the signal $\Vek x$ or the spectrum of $\Mat A$ are known,
we can recover the respective unknown information from the temporal
samples of only one sampling point.  To a certain degree, we may even
determine some information if  both -- the signal and the spectrum
-- are unknown.  Using one sampling point, we however lose the order
of the components. So we only obtain the unordered spectrum of $\Mat A$.

\begin{theorem}
  \label{thm:phase-eig-values}
  Let $\Mat A = \Mat S \Mat \Lambda \Mat S^{-1}$ be diagonalizable by a known eigenvector basis $\Mat S$ and assume that the eigenvalues are absolutely
  collision-free. Let $\Vek \phi \in \BC^d$ depend on
  all eigenspaces of $\Mat A$, and let
  $\Vek y \coloneqq \Mat S^* \Vek x$ be elementwise non-zero  for
  unknown $\Vek x \in \BC^d$.  Then the spectrum of $\Mat A$ is
  determined by the samples
  $\{\absn{\iProdn{\Vek x}{\Mat A^\ell \Vek \phi}}\}_{\ell =
    0}^{2d^2-1}$ up to global phase and winding direction.
\end{theorem}

\begin{proof}
  Since the coefficients $c_k = \bar y_k \psi_k$ with
  $\Vek y \coloneqq \Mat S^* \Vek x$ and
  $\Vek \psi \coloneqq \Mat S^{-1} \Vek \phi$ are non-zero, and since
  the eigenvalues are absolutely collision-free, the measurements have
  the form
  \begin{equation*}
    \absn{\iProdn{\Vek x}{\Mat A^\ell \Vek \phi}}^2
    = \sum_{j,k =0}^{d-1} c_j \bar c_k \, (\lambda_j \bar
    \lambda_k)^\ell
    = \sum_{k=0}^{d^2-1} \eta_k \beta_k^\ell
  \end{equation*}
  as shown in \eqref{eq:phaseless-meas}, where $\beta_k$ denotes the
  $d^2$ unique, unknown bases and $\eta_k$ the corresponding
  coefficients.  Applying Prony's method, we now recover the set
  $B \coloneqq \{\beta_k\}_{k=0}^{d^2-1}$.  Note that the relation
  between the elements of $B$ and $\{\lambda_j \bar \lambda_k: k,j=0,\dots,d-1\}$ is still
  unrevealed.

  In the following, we denote the recovered eigenvalues of $\Mat A$ in
  absolutely decreasing order by $\mu_k$, \ie\
  $\absn{\mu_0} > \cdots > \absn{\mu_{d-1}}$, and recover the permuted
  eigenvalues step by step.  Our assumption guarantees that
  $\mu_j \bar \mu_k$ differs from $\mu_k \bar \mu_j$, \ie\ the
  imaginary part cannot vanish; so the real values in $B$ correspond
  to the magnitudes $\absn{\mu_k}$.  The absolute collision freedom
  now allow us to recover the products $\mu_j \bar \mu_k$ and
  $\mu_k \bar \mu_j$ in $B$ corresponding to $\absn{\mu_j}$ and
  $\absn{\mu_k}$.  We now assume that $\mu_0$ is real and positive
  because the global phase cannot be recovered.  Considering
  $\mu_0 \bar \mu_1$ and $\mu_1 \bar \mu_0$, we obtain the relative
  phase $\arg(\mu_0) - \arg(\mu_1) \Mod 2 \uppi$ up to sign.  At this
  point, we have to chose one winding direction for the phase.  For
  $k = 2, \dots, d-1$, we may consider the relative phases between
  $\mu_k$ and the recovered $\mu_0$ and $\mu_1$, see
  \autoref{fig:proof-sys-id}, which uniquely determines the remaining
  phases.  \qed
\end{proof}

\begin{figure}
  \centering
  \includegraphics{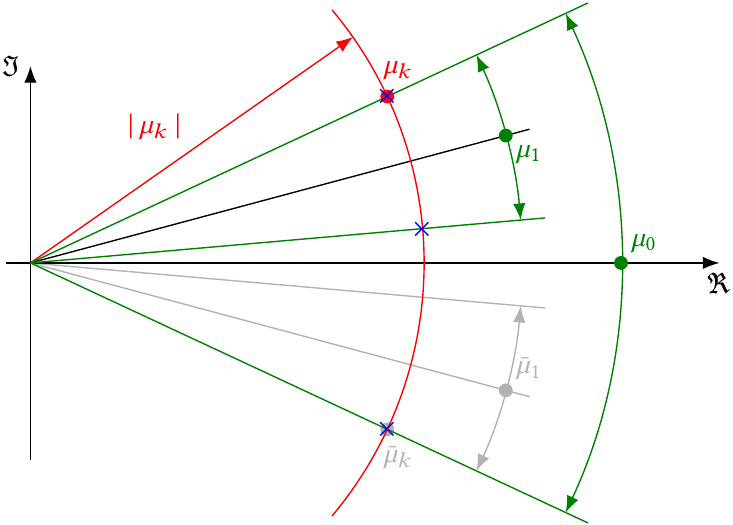}
  \caption{Propagating the phase in the proof of
    Theorem~\ref{thm:phase-eig-values}.  The points $\mu_0$ and
    $\mu_1$ are already known.  Using the relatives phases
    $\pm \arg(\mu_k \mu_0)$ and $\pm \arg(\mu_k \bar \mu_1)$, starting
    from $\mu_0$ and $\mu_1$, we obtain two possible candidates
    ({\color{Blue}$\times$}) for $\mu_k$ respectively since
    $\absn{\mu_k}$ is known too.  Further, since $\mu_1$ cannot also be
    real by assumption, exactly two candidates coincide yielding
    $\mu_k$.  For the other winding direction, \ie\ choosing
    $\bar \mu_1$ instead of $\mu_1$, we obtain $\bar \mu_k$.}
  \label{fig:proof-sys-id}
\end{figure}

\begin{remark}
  Note that the spectrum retrieved in \thref{thm:phase-eig-values} is an
  unordered set, \ie\ the relation to the known eigenvectors in
  $\Mat S$ is not revealed.  Applying the recovered relations between
  the bases, we may also recover the coefficients $c_k$ in
  \eqref{eq:phaseless-meas} up to global phase and winding direction.
  However, without knowing the actual order of the
  eigenvalues/coefficients, the recovery of the unknown signal is
  forlorn.  \qed
\end{remark}

Supposing that the unknown complex eigenvalues of the operator
$\Mat A$ have a clearly recognizable structure like
increasing/decreasing absolute values leads to highly artificial side
condition.  A nevertheless interesting special case are real-valued
convolutional systems with symmetrically decreasing kernels in the
frequency domain.  For the following theorem, we therefore restrict
the setup  to real-valued signals $\Vek x \in \BR^d$, real-valued
convolution operators $\Circ \Vek a$ with $\Vek a \in \BR^d$, and
real-valued sampling vectors $\Vek \phi \in \BR^d$.  We call a kernel
$\Vek a$ \emph{strictly, symmetrically decreasing} when
\begin{equation*}
  \hat{\Vek  a} \in \BR^d_{++}, \qquad
  \hat a_k = \hat a_{-k},
  \qquad\text{and}\qquad
  \hat a_k > \hat a_j
\end{equation*}
for $k,j \in \{0, \dots, \lfloor\nicefrac d2\rfloor \}$ with $k < j$.
The negative indices are here considered modulo $d$, and $\BR_{++}$
denotes the real and positive half axis.  Strictly, symmetrically
decreasing kernels correspond to low-pass filters, whose
identification in dynamical sampling has been studied in \cite{tang2017}.  Note that the signal $\Vek a$ is real and symmetric too.
We call the kernel \emph{collision-free in frequency} if the products
$\hat a_j \hat a_k$ are unique for
$k,j \in \{0, \dots, \lfloor\nicefrac d2\rfloor \}$ with $j \ge k$.
This definition differs from the collision-free complex sets.  In
order to recover both -- signal and kernel, we employ two sampling
vectors $\Vek \phi_1$ and $\Vek \phi_2$.  We call $\Vek \phi_1$ and
$\Vek \phi_2$ \emph{pointwise independent (in the frequency domain)}
when $\hat \phi_{1,k}$ and $\hat \phi_{2,k}$ interpreted as
two-dimensional real vectors are independent for
$k = 1, \dots, \lfloor \nicefrac d2 \rfloor$.  For this specific
setting, the identification of the system and the signal is usually
possible.

\begin{theorem}
  \label{thm:sys-sig:real-case}
  Let $\Vek a \in \BR^d$ be strictly, symmetrically decreasing and
  collision-free in frequency, let
  $\Vek \phi_1, \Vek \phi_2 \in \BR^d$ be pointwise independent, and
  let $\Vek x \in \BR^d$ satisfy
  $\Re[\bar{\hat x}_k\hat \phi_{i,k} ] \ne 0$ for $k =-\lfloor \nicefrac{(d-1)}2
  	\rfloor, \dots, \lfloor\nicefrac d2\rfloor$,
  $i = 1,2$.  Then $\Vek a$ and $\Vek x$ can be recovered from the
  samples
  \begin{equation*}
    \bigl\{
    \absn{\iProdn{\Vek x}{(\Circ \Vek a)^\ell \Vek \phi_1}},
    \absn{\iProdn{\Vek x}{(\Circ \Vek a)^\ell \Vek \phi_2}}
    \bigr\}_{\ell =0}^{L-1}
    \qquad\text{with}\qquad
    L \coloneqq
    \bigl( \bigl\lfloor\tfrac d2 \bigr\rfloor + 1 \bigr)
    \bigl( \bigl\lfloor\tfrac d2 \bigr\rfloor + 2 \bigr)
  \end{equation*}
  up to global sign.
\end{theorem}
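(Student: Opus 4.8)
The plan is to pass to the frequency domain, where $\Mat A = \Circ\Vek a$ is diagonal, and to read off $\Vek a$ and $\Vek x$ from a single folded Prony problem per sampling vector. First I would combine Parseval with $\Circ(\Vek a) = \nicefrac1d\,\Mat F\diag(\hat{\Vek a})\,\Mat F^{-1}$ to write
\begin{equation*}
  \iProdn{\Vek x}{\Mat A^\ell\Vek\phi_i}
  = \tfrac1d\sum_{k=0}^{d-1} \hat a_k^\ell\,\hat x_k\,\bar{\hat\phi}_{i,k}.
\end{equation*}
Since $\Vek a$ is strictly, symmetrically decreasing, the spectrum is real, positive, and $\hat a_k = \hat a_{-k}$, so only the $m \coloneqq \lfloor\nicefrac d2\rfloor+1$ distinct values $\hat a_0 > \hat a_1 > \dots > \hat a_{\lfloor d/2\rfloor} > 0$ occur. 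Folding conjugate frequency pairs and using that $\Vek x,\Vek\phi_i$ are real collapses this to $\iProdn{\Vek x}{\Mat A^\ell\Vek\phi_i} = \nicefrac1d\sum_{r=0}^{m-1} C_{i,r}\,\hat a_r^\ell$ with \emph{real} coefficients $C_{i,r} = 2\Re(\hat x_r\bar{\hat\phi}_{i,r})$ for the paired frequencies (and $C_{i,r} = \hat x_r\hat\phi_{i,r}$ for $r\in\{0,\nicefrac d2\}$); the hypothesis $\Re[\bar{\hat x}_k\hat\phi_{i,k}]\ne 0$ is precisely $C_{i,r}\ne 0$. Squaring yields
\begin{equation*}
  \absn{\iProdn{\Vek x}{\Mat A^\ell\Vek\phi_i}}^2
  = \tfrac1{d^2}\sum_{r,s=0}^{m-1} C_{i,r}C_{i,s}\,(\hat a_r\hat a_s)^\ell,
\end{equation*}
an exponential sum whose bases $\hat a_r\hat a_s$ are, by collision freedom in frequency, distinct for $r\ge s$. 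There are $\binom{m+1}2$ of them, so the $L = m(m+1) = 2\binom{m+1}2$ samples are exactly what Prony's method requires.

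Next, for each $i$ I would apply \thref{lem:singular-value-and-roots} to recover the common base set $\{\hat a_r\hat a_s : r\ge s\}$ together with the coefficients $\nicefrac1{d^2}C_{i,r}^2$ (diagonal $r=s$) and $\nicefrac2{d^2}C_{i,r}C_{i,s}$ (off-diagonal $r>s$). The eigenvalues are then extracted by greedy peeling: the largest base is $\hat a_0^2$, hence $\hat a_0 = \sqrt{\max}$; after deleting all products among the already-found eigenvalues, the largest remaining base is always $\hat a_0\hat a_r$, which yields the next $\hat a_r$. Strict monotonicity and distinctness of the products make the matching of every base to its index pair $(r,s)$ unambiguous, and because the eigenvalues are known to be positive I take positive roots, so $\hat{\Vek a}$, and thus $\Vek a$, is recovered \emph{exactly}.

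With the bases matched, the diagonal coefficients give $\absn{C_{i,r}}$, while the off-diagonal coefficients $C_{i,r}C_{i,0}$ (whose base $\hat a_r\hat a_0$ is present and distinct) give the sign of $C_{i,r}$ relative to $C_{i,0}$; hence each family $(C_{i,r})_r$ is pinned down up to a single global sign $\epsilon_i\in\{\pm1\}$. For every frequency $r$ the two retrieved numbers furnish the real linear system
\begin{equation*}
  \Re(\hat x_r\bar{\hat\phi}_{1,r}) = \tfrac12 C_{1,r},
  \qquad
  \Re(\hat x_r\bar{\hat\phi}_{2,r}) = \tfrac12 C_{2,r},
\end{equation*}
i.e.\ a $2\times2$ system for the real and imaginary parts of $\hat x_r$ whose coefficient matrix has rows $\hat\phi_{1,r},\hat\phi_{2,r}$ read as real planar vectors. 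Pointwise independence makes this matrix invertible, so $\hat x_r$ — and therefore $\Vek x$ — is determined once the signs $\epsilon_1,\epsilon_2$ are fixed.

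The main obstacle is the consistent coupling of the two a-priori independent sign ambiguities $\epsilon_1,\epsilon_2$: choosing $(\epsilon_1,\epsilon_2)=(+,-)$ rather than $(+,+)$ produces a spurious solution of these systems. I would resolve this through the zeroth frequency, which is genuinely real: since $\hat x_0 = C_{i,0}/\hat\phi_{i,0}$ must return the same real number for $i=1,2$, matching the signs forces $\epsilon_1\epsilon_2 = \operatorname{sign}(\hat\phi_{1,0}\hat\phi_{2,0})$, the agreement of the magnitudes being an automatic consistency check and $\hat x_0,\hat\phi_{i,0}\ne0$ holding by hypothesis. This determines the product $\epsilon_1\epsilon_2$, leaving only the simultaneous flip $(\epsilon_1,\epsilon_2)\mapsto(-\epsilon_1,-\epsilon_2)$, which negates the entire right-hand side and hence replaces $\Vek x$ by $-\Vek x$. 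Consequently $\Vek a$ is recovered exactly and $\Vek x$ up to a global sign, as claimed.
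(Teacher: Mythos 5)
Your proposal is correct and follows essentially the same route as the paper's proof: fold the conjugate frequency pairs into a real exponential sum with $\binom{\lfloor d/2\rfloor+2}{2}$ distinct bases $\hat a_r\hat a_s$, apply Prony's method, peel off the eigenvalues greedily from the largest base downward, read the magnitudes and relative signs of the coefficients from the diagonal and off-diagonal products, and couple the two per-vector sign ambiguities through the zeroth frequency before solving the pointwise $2\times2$ real systems. Your explicit formulation of the sign coupling via $\hat x_0 = C_{i,0}/\hat\phi_{i,0}$ is a slightly more transparent rendering of the paper's ``transfer the sign from $\bar{\hat x}_0\hat\phi_{1,0}$ to $\bar{\hat x}_0\hat\phi_{2,0}$'' step, but it is the same argument.
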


\begin{proof}
  To simplify the notation, we first study the temporal samples with
  respect to an arbitrary sampling vector $\Vek \phi$.  Exploiting the
  symmetry of $\hat{\Vek a}$ and the conjugated symmetry of
  $\Vek c \coloneqq (\bar{\hat x}_k \hat \phi_k)_{k=-\lfloor \nicefrac{(d-1)}2
  	\rfloor}^{\lfloor\nicefrac d2\rfloor}$ caused by the Fourier transform, we
  combine the several times appearing bases in
  \eqref{eq:phaseless-meas} to obtain
  \begin{align*}
    \absn{\iProdn{\Vek x}{(\Circ \Vek a)^\ell \Vek \phi}}^2
    &= \absbb{
      \smashoperator[r]{\sum_{k=-\lfloor \nicefrac{(d-1)}2
      \rfloor}^{\lfloor\nicefrac d2\rfloor}} 
	c_k \hat a_k^\ell}^2  
      = \absbb{ \sum_{k=0}^{\lfloor \nicefrac d2\rfloor} 
      \gamma_k \Re[c_k ] \, \hat a_k^\ell}^2
    \\[\fskip]
    &= \sum_{k=0}^{\lfloor\nicefrac d2\rfloor}
      \sum_{j=0}^{\lfloor\nicefrac d2\rfloor}
      \gamma_k \gamma_j
      \Re[c_k ] \Re[c_j ] \, (\hat a_k \hat a_j)^\ell
    \\[\fskip]
    &= \sum_{k=0}^{\lfloor\nicefrac d2\rfloor}
      \sum_{j=k}^{\lfloor\nicefrac d2\rfloor}
      \underbrace{\gamma_{k,j}
      \Re[c_k ] \Re[c_j ] }_{\eta_k}\, (\hat a_k \hat a_j)^\ell
    = \sum_{k=0}^{\nicefrac L2-1}  \eta_k \beta_k^\ell,
  \end{align*}
with bases $\beta_k$ related to $\hat a_k \hat a_j$ and coefficients $\eta_k$ 
  where the multipliers are given by
  \begin{equation*}
    \gamma_k \coloneqq
    \begin{cases}
      1 &  \text{if} \; k=0 ,\\
      2 &  \text{if} \; k=1,\dots,\lfloor \nicefrac{(d-1)}2 \rfloor,\\
      1 &  \text{if} \; k=\nicefrac d2 \; \text{and $d$ is even},
    \end{cases}
    \qquad\text{and}\qquad
    \gamma_{k,j} \coloneqq
    \begin{cases}
      2 \gamma_k \gamma_j & \text{if} \; k \ne j,\\
      \gamma_k^2 & \text{if} \; k = j.
    \end{cases}
  \end{equation*}
  This exponential sum has exactly
  $\nicefrac12 (\lfloor\nicefrac d2 \rfloor + 1) (\lfloor\nicefrac d2
  \rfloor + 2)$ distinct bases since $\Vek a$ is collision-free in
  frequency.
   
  Applying Prony's method, we compute the bases $\beta_k$ and
  coefficients $\eta_k$.  Because the bases $\beta_k$ are all real and
  non-negative, we need a different procedure than before to reveal
  the relation to the factors $\hat a_k \hat a_j$.  Let $B$ be the set
  of recovered bases, where we assume $\beta_0 > \dots >
  \beta_{\nicefrac L2 - 1}$.  
  \begin{enumerate}[(i)]
  \item The strict, symmetric decrease of $\Vek a$ ensures
    $\beta_0 = \hat a_0^2$.  Now, remove $\beta_0$ from $B$.
  \item The next largest basis $\beta_1$ corresponds to $\hat
    a_0 \hat a_1$ allowing the recovery of $\hat a_1$.  Remove
    $\beta_1=\hat a_0 \hat a_1$ and $\hat a_1^2$ from $B$.
  \item The largest remaining bases correspond to $\hat a_0 \hat a_2$,
    which gives us $\hat a_2$.  Remove all products
    $\hat a_0 \hat a_2$, $\hat a_1 \hat a_2$, $\hat a_2 \hat a_2$ of
    $\hat a_2$ with the recovered components from $B$.
  \item Repeating this procedure, we obtain $\hat a_0, \dots, \hat
    a_{\lfloor \nicefrac d2 \rfloor}$ and, due to symmetry, the
    remaining half of $\hat{\Vek a}$.
  \end{enumerate}
  Alongside of the kernel, we also obtain the relation between
  $\eta_k$ and $\gamma_{k,j} \Re[c_{k}]\Re[c_{j}]$ for each sampling
  vector $\Vek \phi_1$, $\Vek \phi_2$.  Assuming
  $\Re[\bar{\hat x}_0 \hat\phi_{1,0}] = \bar{\hat x}_0 \hat\phi_{1,0}
  > 0$, we compute the real parts $\Re[\bar{\hat x}_k \hat\phi_{1,k}]$
  for $k=1,\dots,\lfloor \nicefrac d2 \rfloor$ by exploiting the
  revealed relative phases (sign changes), transfer the sign from
  $\bar{\hat x}_0 \hat\phi_{1,0}$ to
  $\bar{\hat x}_0 \hat\phi_{2,0} = \Re[\bar{\hat x}_0 \hat\phi_{2,0}]$
  since $\hat\phi_{1,0}$ and $\hat\phi_{2,0}$ are known, and determine
  $\Re[\bar{\hat x}_k \hat\phi_{2,k}]$ for
  $k=1,\dots,\lfloor \nicefrac d2 \rfloor$ analogously.  Due to the
  pointwise linear independence, the equation systems
  \begin{align*}
    \Re[\bar{\hat x}_k \hat \phi_{1,k}  ]
    &= \Re\hat \phi_{1,k} \Re\hat x_k + \Im\hat \phi_{1,k} \Im\hat x_k
      \\[\fskip]
    \Re[\bar{\hat x}_k \hat \phi_{2,k}]
    &= \Re\hat \phi_{2,k} \Re\hat x_k + \Im\hat \phi_{2,k} \Im\hat x_k
  \end{align*}
  gives us $\hat x_k$ for $k=1,\dots,\lfloor \nicefrac d2 \rfloor$.
  With the conjugated symmetry of $\hat{\Vek x}$, the inverse Fourier
  transform yields $\Vek x$ up to the sign.  \qed
\end{proof}

\begin{remark}
  Note that the assumption $\Re[\bar{\hat x}_k \hat \phi_{i,k}] \ne 0$
  for $k=0,\dots,d-1$ may be weakened to only hold for one sampling
  vector $\Vek \phi_1$ or $\Vek \phi_2$ as long as
  $\Re[\bar{\hat x}_0 \hat \phi_{i,0}] \ne 0$ for both.  In this case,
  the exponential sum corresponding to the temporal samples of the other
  sampling vector may consist of less than
  $\nicefrac12 \, (\lfloor \nicefrac d2 \rfloor +1)(\lfloor \nicefrac
  d2 \rfloor +2)$ bases.  Exploiting that the coefficients $\eta_k$ of
  the missing bases are zero, and spreading the sign between the
  non-zero coefficients, we can nevertheless recover $\Vek x$.  \qed
\end{remark}

It is also possible to identify the strictly, symmetrically decreasing
kernel alongside a complex signal and to allow complex sampling
vectors.  In this case, the temporal samples corresponding to one sampling
vector $\Vek \phi$ possesses the form
\begin{equation*}
  \absn{\iProdn{\Vek x}{(\Circ \Vek a)^\ell \Vek \phi}}^2
  =\sum_{k=0}^{\lfloor \nicefrac d2 \rfloor}
  \sum_{j=k}^{\lfloor \nicefrac d2 \rfloor}
  \tfrac{\gamma_{k,j}}4 \,
  \Re[(c_k + c_{-k})(\bar c_j + \bar c_{-j})]
  \, (\hat a_k \hat a_j)^\ell.
\end{equation*}
Similarly to the proof of \thref{thm:sys-sig:real-case}, we may
recover the kernel $\Vek a$ from the temporal samples of one sampling
vector if
\begin{equation}
  \label{eq:c-conv-sys:coeff}
  \Re \bigl[ (\bar{\hat x}_k \hat \phi_{k}
  + \bar{\hat x}_{-k} \hat \phi_{-k})(
  \overline{\bar{\hat x}_j \hat \phi_{j}
  + \bar{\hat x}_{-j} \hat \phi_{-j}} )
  \bigr] \ne 0
\end{equation}
for $k,j = 0, \dots, \lfloor \nicefrac d2 \rfloor$.  Additionally, the
signal $\Vek x$ may be recovered if four sampling vectors are
employed.  In this case, the coefficient of $\hat a_0^2$ is just
$\absn{\bar{\hat x}_0 \hat \phi_{i,0}}$; so fixing the phase for
$c_{1,0}$, we may spread the phase to $c_{i,0}$, $i=2,3,4$, where the
first index stands for the related sampling vector, \ie\ all
coefficients $c_{i,0}$ are known.  If the equation system
\begin{align*}
  \Re [ c_{i,0} (\bar c_{i,k} + \bar c_{i,-k}) ]
  &= \Re[\bar c_{i,0} \hat \phi_{i,k}] \, \Re \hat x_k
    +  \Im[\bar c_{i,0} \hat \phi_{i,k}] \, \Im \hat x_k
  \\[\fsmallskip]
  & \qquad+ \Re[\bar c_{i,0} \hat \phi_{i,-k}] \, \Re \hat x_{-k}
    +  \Im[\bar c_{i,0} \hat \phi_{i,-k}] \, \Im \hat x_{-k}
\end{align*}
with $i=1,\dots,4$ is solvable, we obtain $\hat{\Vek x}$ and thus
$\Vek x$.  Notice that the recovery of $\hat{\Vek a}$ here is not a 
special case of \thref{thm:phase-eig-values} since $\hat{\Vek a}$ is
not collision-free as a complex set.  In sum, the following statement
can be  established.

\begin{theorem}
  \label{thm:sys-sig:comp-case}
  Let $\Vek a \in \BR^d$ be strictly, symmetrically decreasing and
  collision-free in frequency, let
  $\Vek \phi_1, \dots, \Vek \phi_4 \in \BC^d$ and $\Vek x \in \BC^d$
  satisfy \eqref{eq:c-conv-sys:coeff}.  If the real-valued vectors
  \begin{equation*}
    (\Re[\bar c_{i,0} \hat \phi_{i,k}], \Im[\bar c_{i,0} \hat
    \phi_{i,k}], \Re[\bar c_{i,0} \hat \phi_{i,-k}], \Im[\bar c_{i,0}
    \hat \phi_{i,-k}])^\T, \qquad i=1,\dots,4,
  \end{equation*}
  are independent for each
  $k = 1, \dots, \lfloor \nicefrac {(d-1)}2 \rfloor$, then $\Vek a$ and
  $\Vek x$ can be recovered from the samples
  \begin{equation*}
    \bigl\{
    \absn{\iProdn{\Vek x}{(\Circ \Vek a)^\ell \Vek \phi_i}}
    \bigr\}_{\ell =0, i=1}^{L-1,4}
    \qquad\text{with}\qquad
    L \coloneqq
    \bigl( \bigl\lfloor\tfrac d2 \bigr\rfloor + 1 \bigr)
    \bigl( \bigl\lfloor\tfrac d2 \bigr\rfloor + 2 \bigr)
  \end{equation*}
  up to global phase.
\end{theorem}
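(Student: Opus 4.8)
The plan is to follow the blueprint of \thref{thm:sys-sig:real-case}, treating the complex case as a structural refinement rather than a genuinely new argument. The discussion preceding the statement already reduces the temporal samples of a single sampling vector $\Vek\phi$ to an exponential sum in the real, positive bases $\hat a_k \hat a_j$ with real coefficients $\tfrac{\gamma_{k,j}}4 \Re[(c_k + c_{-k})(\bar c_j + \bar c_{-j})]$, where $k \le j$. Collision-freeness in frequency guarantees exactly $\nicefrac12 (\lfloor\nicefrac d2\rfloor+1)(\lfloor\nicefrac d2\rfloor+2)$ distinct bases, so that the $L = (\lfloor\nicefrac d2\rfloor+1)(\lfloor\nicefrac d2\rfloor+2)$ samples per sampling vector are exactly enough to apply Prony's method, and \eqref{eq:c-conv-sys:coeff} ensures that none of the coefficients vanishes, so every basis is genuinely present and recoverable for each of the four sampling vectors.

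First I would recover the kernel. Since the bases are real and non-negative, I would reuse the peeling procedure from \thref{thm:sys-sig:real-case} verbatim: the largest base is $\hat a_0^2$, the next is $\hat a_0 \hat a_1$, and so on, where at each step the strict, symmetric decrease together with collision-freeness identifies the next value $\hat a_k$ and lets us strike all products of $\hat a_k$ with the already recovered components from the remaining bases. This yields $\hat a_0, \dots, \hat a_{\lfloor\nicefrac d2\rfloor}$ and, by symmetry, the whole spectrum $\hat{\Vek a}$, hence $\Vek a$. As a by-product, for every sampling vector the recovered coefficients become labelled by their index pair $(k,j)$.

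The genuinely new part is the signal recovery, and this is where I expect the main obstacle. The coefficient of $\hat a_0^2$ determines only the modulus $\absn{c_{i,0}}$ for each sampling vector $i$, not its phase. Exploiting the global-phase freedom, I would normalise $c_{1,0} > 0$; since $c_{i,0} = \bar{\hat x}_0 \hat\phi_{i,0}$ and all $\hat\phi_{i,0}$ are known, this single choice fixes the phase of $\bar{\hat x}_0$ and thereby of every $c_{i,0}$, so that $c_{1,0}, \dots, c_{4,0}$ become fully known. This is the precise mechanism by which one global-phase choice propagates across the four otherwise independent exponential sums. For each frequency $k = 1, \dots, \lfloor\nicefrac{(d-1)}2\rfloor$, the labelled coefficient with index pair $(0,k)$ supplies the real quantity $\Re[c_{i,0}(\bar c_{i,k}+\bar c_{i,-k})]$; substituting $c_{i,k} = \bar{\hat x}_k \hat\phi_{i,k}$ turns each of the four sampling vectors into the real linear equation displayed before the statement, in the four unknowns $\Re\hat x_k, \Im\hat x_k, \Re\hat x_{-k}, \Im\hat x_{-k}$. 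The independence hypothesis on the four coefficient vectors makes this $4 \times 4$ system invertible, so that $\hat x_k$ and $\hat x_{-k}$ are determined; the self-paired indices $k=0$ and, for even $d$, the Nyquist index $k=\nicefrac d2$ are handled directly as above. An inverse Fourier transform then returns $\Vek x$ up to the chosen global phase.

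The conceptual heart --- and the step I would check most carefully --- is the bookkeeping that explains why four sampling vectors are both necessary and sufficient: each complex pair $(\hat x_k, \hat x_{-k})$ carries four real degrees of freedom, whereas one sampling vector contributes only one real equation per frequency, so four vectors are needed, and sufficiency rests entirely on the stated linear-independence condition. I would also take care with the edge indices $k=0$ and $k=\nicefrac d2$, where the pairing degenerates and the equation count changes, to confirm that the recovery stays consistent there.
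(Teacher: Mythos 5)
Your proposal matches the paper's own argument, which is given in the discussion immediately preceding the theorem rather than in a separate proof environment: the same reduction to an exponential sum in the real bases $\hat a_k \hat a_j$, the same peeling recovery of the kernel as in \thref{thm:sys-sig:real-case}, the same propagation of the single global-phase choice from $c_{1,0}$ through the known $\hat\phi_{i,0}$ to all four coefficients $c_{i,0}$, and the same $4\times 4$ real linear system per frequency solved via the stated independence condition. Your extra attention to the edge indices $k=0$ and $k=\nicefrac d2$ is a detail the paper glosses over, but it does not change the route.
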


\begin{remark}
  The strictly, symmetrically decreasing kernels form a
  $(\lfloor \nicefrac d2 \rfloor +1)$-dimensional manifold.  Further,
  the not collision-free kernels live on the union of submanifolds
  with strictly smaller dimension; so almost all strictly,
  symmetrically decreasing kernels are collision-free.  Moreover,
  almost all vectors $\Vek x$ and $\Vek \phi_i$ satisfy the posed
  conditions in the real as well as in the complex setting.  \qed
\end{remark}

\section{Multiple sampling vectors}
\label{sec:multi-samp-vec}

Let us return to the parameter identification of arbitrary systems
after that brief digression to strictly, symmetrically decreasing
convolution kernels.  Revisiting the statement in
\thref{thm:phase-eig-values}, our main problem has been that we cannot
recover the order of the spectrum from merely one sampling vector if
both -- signal and eigenvalues -- are unknown. Since our analysis is
based on Prony's method, we have always relied on a squared number of
measurements.  To surmount these shortcomings, we suppose specifically
constructed sets of sampling vectors.

Instead of assuming that the sampling vectors $\Vek \phi_i$ depend on
all eigenspaces of the system matrix $\Mat A$, we now assume that
$\Vek \phi_i$ might only depends on a small set of eigenspaces.  Considering
the temporal samples for such a sampling vector, in analogy to
\eqref{eq:phaseless-meas}, we have
\begin{equation*}
  \absn{\iProdn{\Vek x}{\Mat A^\ell \Vek \phi_i}}^2
  = \absn{\iProdn{\Vek y}{\Mat \Lambda^\ell \Vek \psi_i}}^2
  = \absbb{ \sum_{k \in \mathcal I_i} \lambda_k^\ell
    \underbracket{ \bar y_k \psi_{i,k}}_{\eqqcolon c_{i,k}}}^2
  = \sum_{j,k \in \mathcal I_i} c_{i,j} \bar{c}_{i,k} \,
  (\lambda_j \bar{\lambda}_k)^\ell,
\end{equation*}
where $\Vek y \coloneqq \Mat S^* \Vek x$,
$\Vek \psi_i \coloneqq \Mat S^{-1} \Vek \phi_i$, and
$\mathcal I_i \coloneqq \supp \Vek \psi_i$.  Since $\Vek \phi_i$ only
captures a small part of the spectrum, the last sum only consists of
$\absn{\mathcal I_i}$ exponentials instead of $d^2$ and allows the
recovery of a specific part of the spectrum.  To combine these partial
information and to overcome the mentioned issues, the sampling vectors
$\{ \Vek \phi_i \}_{i=0}^{J-1}$ with
$\Vek \psi_i \coloneqq \Mat S^{-1} \Vek \phi_i$ should allow
\begin{enumerate}[(i)]
\item \emph{index separation:} the supports of
  $\{\Vek\psi_i\}_{i=0}^{J-1}$ form a full cover meaning
  $\bigcup_{i=0}^{J-1} \supp \Vek \psi_j = \{0, \dots, d-1\}$, and  for every $k \in \{0, \dots, d-1\}$
  there exist two index sets ${\mathcal F}_k$ and
  ${\mathcal G}_k$ such that
  \begin{equation}
    \label{eq:ind-sep}
    \{k\} = \bigcap_{i\in{\mathcal F}_k} \supp \Vek \psi_i
    \Bigm\backslash \bigcup_{i \in  {\mathcal G}_k}\supp \Vek \psi_i,
  \end{equation}
 
\item \emph{phase propagation:}  the set $\{ \Vek
  \phi_i\}_{i=0}^{J-1}$ is ordered such that
  \begin{equation}
    \label{eq:phase-prop}
    \#\biggl[\supp \Vek \psi_k \cap \bigcup_{i=0}^{k-1} \supp \Vek
    \psi_i\biggr] = 2 
  \end{equation}
  for $k = 1, \dots, J-1$, \ie\ there is an overlap of two elements at
  least,
\item \emph{winding direction determination:} there are indices $i_1$,
  $i_2$, $k_1$, $k_2$ such that
  \begin{equation}
    \label{eq:wind-dir-det}
    \arg(\psi_{i_1,k_1} \bar \psi_{i_1,k_2})
    \not\equiv
    \arg(\psi_{i_2,k_1} \bar \psi_{i_2,k_2})
    \Mod \uppi,
  \end{equation}
  where $\psi_{i_1,k_1} \bar \psi_{i_1,k_2}$ and
  $\psi_{i_2,k_1} \bar \psi_{i_2,k_2}$ are non-zero.
\end{enumerate}
If the sampling vectors $\Vek \phi_i$ fulfill all three assumptions, we say that the
sampling set allows \emph{parameter identification and phase
  retrieval} (up to global phase).

\begin{theorem}
  \label{thm:sig-sys:sampl-set}
  Let $\Mat A = \Mat S \Mat \Lambda \Mat S^{-1}$ be diagonalizable by a known eigenvector basis $\Mat S$ and assume that the eigenvalues are absolutely
  collision-free. Let $\{\Vek\phi_j\}_{j=0}^{ J-1}\subset \BC^d$
  allow parameter identification and phase retrieval, and let
  $\Vek y \coloneqq \Mat S^* \Vek x$ be elementwise non-zero for
  unknown $\Vek x \in \BC^d$. Then the eigenvalues
  $\lambda_0, \dots, \lambda_{d-1}$ of $\Mat A$ and the signal
  $\Vek x$ are determined by the spatiotemporal samples
  \begin{equation*}
    \bigl \{ | \langle x, \Mat A^\ell \Vek \phi_i \rangle
    | \bigr\}_{\ell,i=0}^{L_i^2-1,J-1}
    \qquad\text{with}\qquad
    L_i \coloneqq  \#[\supp (\Mat S^{-1} \Vek \phi_i)]
  \end{equation*}
  up to  global phase. 
  \end{theorem}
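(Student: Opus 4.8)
The plan is to run the single-vector analysis of \thref{thm:phase-eig-values} separately on each sampling vector and then to glue the partial spectra together using the three structural assumptions. First I would fix a sampling vector $\Vek\phi_i$ and observe that, exactly as in \eqref{eq:phaseless-meas}, its temporal samples form the real exponential sum $\sum_{j,k\in\mathcal I_i} c_{i,j}\bar c_{i,k}(\lambda_j\bar\lambda_k)^\ell$ with $c_{i,k}=\bar y_k\psi_{i,k}$ and $\mathcal I_i=\supp\Vek\psi_i$. Since the eigenvalues are absolutely collision-free, the $L_i^2$ bases $\lambda_j\bar\lambda_k$ with $j,k\in\mathcal I_i$ are pairwise distinct, and because $\Vek y$ is elementwise non-zero all coefficients are non-zero; hence Prony's method (\thref{lem:singular-value-and-roots}) recovers this base/coefficient list block by block. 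Repeating the magnitude-sorting argument of \thref{thm:phase-eig-values} inside each block, the real bases yield the magnitudes $\absn{\lambda_k}$ for $k\in\mathcal I_i$, the absolute collision freedom matches each conjugate pair $\lambda_j\bar\lambda_k,\lambda_k\bar\lambda_j$ to its magnitude product $\absn{\lambda_j}\absn{\lambda_k}$, and I obtain every relative phase $\arg\lambda_j-\arg\lambda_k$ up to a single winding sign $s_i$ per block.

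Next I would recover the global order of the spectrum. Because $\Mat S$ is known, each index set $\mathcal I_i$ is known, and a magnitude value appears in block $i$ precisely when $k\in\mathcal I_i$. Comparing the occurrence pattern of each recovered magnitude against the index-separation identity \eqref{eq:ind-sep} singles out, for every $k$, the unique value that appears in all blocks $i\in\mathcal F_k$ and in none of the blocks $i\in\mathcal G_k$; this assigns $\absn{\lambda_k}$ to the correct eigenvector index $k$. To stitch the phases, I would order the blocks as in \eqref{eq:phase-prop}: the overlap of at least two indices with the already-processed blocks supplies at least one relative phase $\arg\lambda_{k_1}-\arg\lambda_{k_2}$ that is already fixed, and since collision freedom forces this value to be $\not\equiv 0\Mod\uppi$, comparing it with the value reported by the new block determines $s_i$ relative to the first block. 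Propagating through the ordering aligns all blocks to one winding convention, so the eigenvalues are now determined up to a global phase and a single residual winding direction.

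It remains to extract the signal and to fix that last winding direction, which is the main obstacle. From the diagonal coefficients $\absn{c_{i,k}}^2=\absn{y_k}^2\absn{\psi_{i,k}}^2$ and the known $\psi_{i,k}$ I would read off $\absn{y_k}$, and from the off-diagonal coefficients $c_{i,j}\bar c_{i,k}=\bar y_jy_k\,\psi_{i,j}\bar\psi_{i,k}$ I would read off $\bar y_jy_k$ after dividing by the known factor. The full cover together with the two-element overlaps makes the co-occurrence graph on $\{0,\dots,d-1\}$ connected, so these relative phases and magnitudes pin down $\Vek y=\Mat S^*\Vek x$, hence $\Vek x$, up to global phase and up to the winding flip. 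The subtle point is that choosing the wrong winding swaps each base $\lambda_j\bar\lambda_k$ with its conjugate, so the value extracted for $\bar y_{k_1}y_{k_2}$ from block $i$ becomes $\overline{\bar y_{k_1}y_{k_2}}$ times the block-dependent phase $\bar a_i/a_i$ with $a_i\coloneqq\psi_{i,k_1}\bar\psi_{i,k_2}$, whose argument is $-2\arg a_i$. Consequently two blocks $i_1,i_2$ that both see the pair $(k_1,k_2)$ return consistent values under the correct winding but inconsistent ones under the wrong winding, unless $\arg a_{i_1}\equiv\arg a_{i_2}\Mod\uppi$. Assumption \eqref{eq:wind-dir-det} rules this out, so a cross-block consistency check selects the true winding direction, and the eigenvalues together with $\Vek x$ are recovered up to global phase.
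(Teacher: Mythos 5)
Your proposal is correct and follows essentially the same route as the paper: per-vector Prony recovery of the partial spectra, index separation via the occurrence pattern of the magnitudes, phase/winding alignment through the two-element overlaps, and resolution of the residual winding flip via assumption \eqref{eq:wind-dir-det}, followed by reading off $\Vek y$ from the coefficients. The only cosmetic difference is the last step: the paper solves a $2\times 2$ real system for $y_{k_1}\bar y_{k_2}$ from the winding-invariant real parts $\Re[c_{i,k_1}\bar c_{i,k_2}]$, whereas you run a cross-block consistency check on the extracted $\bar y_{k_1}y_{k_2}$ -- both hinge on exactly the same non-degeneracy condition, so this is a minor variant rather than a different argument.
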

\begin{proof}
  Using the procedure in the proof of \thref{thm:phase-eig-values}, we
  recover the unblocked part
  $\Lambda_i \coloneqq \{\lambda_k : k \in \mathcal I_i\}$ of the spectrum of
  $\Mat A$ for each $i =0, \dots, J-1$ up to global phase and winding
  direction.  Note that we do not know which value in $\Lambda_i$
  corresponds to which index.  However, since the eigenvalues are
  absolutely collision-free, and since the sampling set allows index
  separation, we have
  \begin{equation*}
   \bigcap_{j \in \mathcal F_k} \absn{\Lambda_i} \Bigm \backslash \bigcup_{i
     \in \mathcal G_k} \absn{\Lambda_i}
   = \absn{\lambda_k},
  \end{equation*}
  where the absolute value is applied element by element.  Thus the
  true index of the eigenvalues is revealed.

  Using that the sampling set allows phase propagation, we align the
  global phase and winding direction of the sets $\Lambda_i$ as
  follows.  First, we fix the global phase and winding direction of
  $\Lambda_0$.  There are at least two eigenvalues $\lambda_{k_1}$ and
  $\lambda_{k_2}$ that are contained in $\Lambda_0$ and $\Lambda_1$.
  The collision-freedom ensures
  $\arg(\lambda_{k_1} \bar \lambda_{k_2}) \not\equiv 0 \Mod \uppi$.
  Using $\lambda_{k_1}$ and $\lambda_{k_2}$, which can be identified
  by their absolute values, the global phase and winding direction are
  uniquely transferable form $\Lambda_0$ to $\Lambda_1$, \ie\ we
  obtain the eigenvalues in $\Lambda_0 \cup \Lambda_1$ up to global
  phase and winding direction.  Repeating this argument, we
  propagate the phase information to the remaining subsets
  $\Lambda_i$, which results in the recovery of all eigenvalues
  $\lambda_0, \dots, \lambda_{d-1}$ up to global phase and winding
  direction.

  The ambiguity with respect to the winding direction occurs since we
  have not been able to determine whether the true relative phase
  between $\lambda_{j}$ and $\lambda_{k}$ corresponds to
  $\arg(\lambda_{j} \bar \lambda_{k})$ or to
  $\arg(\lambda_{k} \bar \lambda_{j})$.  Let us now consider the
  indices $i_1$, $i_2$, $k_1$, $k_2$ in the winding direction property
  \eqref{eq:wind-dir-det} of $\{\Vek \phi_i \}_{i=0}^{J-1}$.  Notice
  that both $\lambda_{k_1}$ and $\lambda_{k_2}$ are captured by the
  sampling vectors $\Vek \phi_{i_1}$, $\Vek \phi_{i_2}$.  Due to the
  missing winding direction, the coefficients
  $c_{i_1,k_1}\bar{c}_{i_1,k_2}$ and $c_{i_2,k_1}\bar{c}_{i_2,k_2}$
  can only be identified up to the conjugation; so we merely obtain
  $\Re[c_{i_1,k_1}\bar{c}_{i_1,k_2}]$ and
  $\Re[c_{i_2,k_1}\bar{c}_{i_2,k_2}]$, which however are given by
  \begin{align*}
    \Re[ c_{i_1,k_1}\bar{c}_{i_1,k_2}]
    &= \Re[y_{k_1} \bar  y_{k_2} ] \, \Re[\psi_{i_1,k_1} \bar
      \psi_{i_1,k_2}]
      + \Im[y_{k_1} \bar y_{k_2}] \,
      \Im[\psi_{i_1,k_1} \bar \psi_{i_1,k_2}],
    \\[\fskip]
    \Re[ c_{i_2,k_1}\bar{c}_{i_2,k_2}]
    &= \Re[y_{k_1} \bar  y_{k_2} ] \, \Re[\psi_{i_2,k_1} \bar
      \psi_{i_2,k_2}]
      + \Im[y_{k_1} \bar y_{k_2}] \,
      \Im[\psi_{i_2,k_1} \bar \psi_{i_2,k_2}].
  \end{align*}
  Our assumptions guarantees that this equation system has the unique
  answer $y_{k_1} \bar y_{k_2}$, which yields $c_{i_1,k_1}\bar{c}_{i_1,k_2}$
  and $c_{i_2,k_1}\bar{c}_{i_2,k_2}$ without conjugation ambiguity.
  Further, at least one of the products $c_{i_1,k_1}\bar{c}_{i_1,k_2}$
  and $c_{i_2,k_1}\bar{c}_{i_2,k_2}$ has a non-vanishing imaginary
  part again due to \eqref{eq:wind-dir-det}.  The corresponding basis
  $\lambda_{k_1} \bar \lambda_{k_2}$ reveals the true winding
  direction resulting in the recovery of $\lambda_0, \dots,
  \lambda_{d-1}$ up to global phase.

  Considering the coefficient of the temporal samples for each
  $\Vek \phi_i$, we determine $y_k$ with $k \in \supp \Vek \psi_i$ up
  to global phase.  The recovered components of $\Vek y$ may now be
  aligned due to the overlap between the supports in
  \eqref{eq:phase-prop} yielding $\Vek y$ up to global phase.
  Applying the inverse of $\Mat S^*$, we finally obtain the wanted
  signal $\Vek x$ up to global phase.  \qed
\end{proof}

\begin{remark}
  \label{rem:sig-sys:sampl-set}
  The absolute collision-freedom of the eigenvalues can be weakened.
  More precisely, we only require the absolute collision-freedom on
  the non-blocked parts of the spectrum with respect to $\{\Vek
  \phi_i\}_{i=0}^{J-1}$, \ie\ we only require that the sets
  $\Lambda_i$ are absolutely collision-free.  In order to propagate
  the phase, there have to be to at least two indices
  \begin{equation*}
    k_1, k_2 \in \supp \Vek \psi_k \cap
    \bigcup_{i=0}^{k-1} \supp \Vek \psi_i 
  \end{equation*}
  for $k = 1, \dots, J-1$, \cf\ \eqref{eq:phase-prop}, satisfying
  $\arg(\lambda_{k_1} \bar \lambda_{k_2}) \not\equiv 0 \Mod \uppi$.
  \qed
\end{remark}

\thref{thm:sig-sys:sampl-set} not only allow us to recover the signal
and the system's eigenvalues simultaneously but also to reduce the
required number of samples.  In the statements before, the number of
measurements to apply Prony's method is always a multiple of the
squared dimension, \ie\ we require $\Landau(d^2)$ samples.  In
\thref{thm:sig-sys:sampl-set} the number of spatiotemporal samples
mainly correlate with the support sparsity
$L_i \coloneqq \# [\supp(\Mat S^{-1} \Vek \phi_i)]$.  With
$L \coloneqq \max\{L_i : i = 0, \dots J-1\}$, the number of samples is
thus bounded by $2 L^2 J$.  Notice that we need $d$ vectors at the
most to build a sampling set allow parameter identification and phase
retrieval. For instance the sampling vectors may be constructed such
that $\supp \Vek \psi_i \coloneqq \{i, \dots, i + L - 1\}$ for
$i = 0, \dots, d-L$ and $L \ge 3$. We then employ only $\Landau(d L^2)$
measurement.  For a fixed sparsity $L$, we only need
linearly many spatiotemporal samples.

\begin{corollary}
  Under the assumption of \thref{thm:sig-sys:sampl-set}, the
  eigenvalues of $\Mat A \in \BC^{d \times d}$ and the unknown signal
  $\Vek x \in \BC^d$ are identifiable with $\Landau(d)$
  spatiotemporal samples.
\end{corollary}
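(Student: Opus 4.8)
The plan is to exhibit, for the smallest admissible window length $L = 3$ (independent of $d$), an explicit sampling set whose vectors each depend on only $L$ eigenspaces, and then to invoke \thref{thm:sig-sys:sampl-set}. Since the eigenvector basis $\Mat S$ is known and invertible, I would prescribe the coordinate vectors $\Vek \psi_i \coloneqq \Mat S^{-1} \Vek \phi_i$ directly by setting $\supp \Vek \psi_i \coloneqq \{i, i+1, i+2\}$ for $i = 0, \dots, d-3$ with freely chosen non-zero entries, and then define the actual sampling vectors through $\Vek \phi_i \coloneqq \Mat S \Vek \psi_i$. This yields $J \coloneqq d-2$ sampling vectors with $L_i = \#[\supp \Vek \psi_i] = 3$ for every $i$, so that the overall sample count will be governed by the constant window size rather than by $d^2$.

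It then remains to check that this set allows parameter identification and phase retrieval. For the index separation \eqref{eq:ind-sep}, the geometry of the windows is decisive: two windows whose starting indices differ by exactly $L$ overlap in a single point, so for an interior $k$ with $2 \le k \le d-3$ the windows $\{k-2, k-1, k\}$ and $\{k, k+1, k+2\}$ meet precisely in $\{k\}$, giving $\mathcal F_k$ as these two window indices and $\mathcal G_k = \emptyset$. The boundary indices are isolated by an additional set difference, \eg\ $\{0\} = \supp \Vek \psi_0 \setminus \supp \Vek \psi_1$ and $\{1\} = (\supp \Vek \psi_0 \cap \supp \Vek \psi_1) \setminus \supp \Vek \psi_2$, and symmetrically at the right end. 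For the phase propagation \eqref{eq:phase-prop}, consecutive windows overlap in exactly $L-1 = 2$ indices, so $\supp \Vek \psi_k \cap \bigcup_{i<k} \supp \Vek \psi_i = \supp \Vek \psi_{k-1} \cap \supp \Vek \psi_k$ has cardinality two as required.

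The winding direction condition \eqref{eq:wind-dir-det} is the only place where the freedom in the non-zero entries is genuinely used: I would take two consecutive windows, say $i_1 = 0$ and $i_2 = 1$, and the two shared indices $k_1, k_2 \in \supp \Vek \psi_0 \cap \supp \Vek \psi_1$, and choose the corresponding entries so that $\arg(\psi_{0,k_1} \bar \psi_{0,k_2}) \not\equiv \arg(\psi_{1,k_1} \bar \psi_{1,k_2}) \Mod \uppi$, which holds for almost all admissible choices. Consequently the constructed set satisfies all three properties, and \thref{thm:sig-sys:sampl-set} recovers the eigenvalues of $\Mat A$ and the signal $\Vek x$ up to global phase. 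Since each sampling vector requires only $\Landau(L_i^2)$ temporal samples, the total number of spatiotemporal samples is bounded by $2L^2 J = 2L^2(d-2)$, which is $\Landau(d)$ for the fixed sparsity $L$.

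The only delicate points are the verification of \eqref{eq:ind-sep} at the $\Landau(1)$ boundary indices and the simultaneous validity of \eqref{eq:wind-dir-det}; both are resolved by the explicit window geometry together with the free choice of the non-zero coordinates of $\Vek \psi_i$, so no genuine obstruction arises and the linear sample count follows at once.
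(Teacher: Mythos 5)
Your construction is exactly the one the paper sketches in the discussion preceding the corollary: sliding windows $\supp \Vek \psi_i = \{i, \dots, i+L-1\}$ of fixed width $L \ge 3$ (you take $L=3$), giving $\Landau(d)$ sampling vectors with $\Landau(1)$ temporal samples each, and then an appeal to \thref{thm:sig-sys:sampl-set}. Your verification of the index separation, phase propagation, and winding direction properties for this window geometry is correct and simply fills in details the paper leaves implicit, so the proposal matches the paper's approach.
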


The idea of blocking a part of the spectrum to reduce the number of
required spatiotemporal samples clearly transfers to
\thref{thm:sys-sig:real-case} and \ref{thm:sys-sig:comp-case}.  The
indices of the recovered eigenvalues is then determined by the strict,
symmetrical decay; so the index separation, phase propagation, and
winding direction determination is not required, although the supports
of $\{\hat{\Vek \phi}_i\}_{i=0}^{J-1}$ should still form a full cover.
Considering \thref{thm:sys-sig:real-case} exemplarily, we instead need
that, for every $k \in \{0, \dots, d-1\}$, there exists at least one
index $j \in \{0, \dots, J-1\}$ such that
$\Re [\bar{\hat x}_k \hat \phi_{i,k}] \ne 0$ to recover all components
of $\hat{\Vek a}$ and two indices $i_1, i_2 \in \{0, \dots, J-1\}$
such that $\hat\phi_{i_1,k}$ and $\hat\phi_{i_2,k}$ are linearly
independent interpreted as two-dimensional real vectors to recover all
components of $\hat{\Vek x}$.

\section{Phase \& system identification in infinite dimensions}
\label{sec:infinite-dim}

Up to this point, we only considered the finite-dimensional setting.
The central ideas to apply Prony's method to identify the eigenvalues
of the system and the unknown signal simultaneously is however
extendable to the infinite-dimensional setting too.  In the following,
we consider an infinite-dimensional, complex Hilbert space $\Hil H$
and call an invertible, bounded, linear operator
$\Op A : \Hil H \to \Hil H$ \emph{diagonalizable} if $\Op A$ can be
factorized into $\Op A = \Op S \Lambda \Op S^{-1}$, where
$\Op S \colon \ell^2(\mathcal Z) \to \Hil H$ is an invertible,
bounded, linear operator,
$\Lambda \colon \ell^2(\mathcal Z) \to \ell^2(\mathcal Z)$ is a
multiplication operator, and $\mathcal Z$ is an infinite countable set like
$\BN$ or $\BZ$.  The elementwise \emph{multiplication operator}
$\Lambda \colon \ell^2(\mathcal Z) \to \ell^2(\mathcal Z)$ is defined
by
\begin{equation*}
  \Lambda( y) \coloneqq \bigl( \lambda_k \, y_k \bigr)_{k \in
    \mathcal Z}
\end{equation*}
with bounded eigenvalues $\lambda_k \in \BC_*$, \ie\
$\sup_{k \in \mathcal Z} \absn{\lambda_k} < \infty$.

Similarly to the finite-dimensional setting, the temporal samples for
one sampling vector $\Vek \phi_i$ are given by
\begin{equation*}
  \absn{\iProdn{ x}{\Op A^\ell \phi_i}_{\Hil H}}^2
  = \absn{\iProdn{ y}{\Lambda^\ell  \psi_i}_{\ell^2(\mathcal Z)}}^2
  = \absbb{ \sum_{k \in \mathcal I_i} \lambda_k^\ell
    \underbracket{ \bar y_k \psi_{i,k}}_{\eqqcolon c_{i,k}}}^2
  = \sum_{j,k \in \mathcal I_i} c_{i,j} \bar{c}_{i,k} \,
  (\lambda_j \bar{\lambda}_k)^\ell,
\end{equation*}
where $ y \coloneqq \Op S^*  x$,
$\psi_i \coloneqq \Op S^{-1}  \phi_i$, and
$\mathcal I_i \coloneqq \supp \psi_i \subset \mathcal Z$.  
If $\supp  \psi_i$ is finite, the sum on the right-hand side
becomes finite such that Prony's method may be applied to recover the
present eigenvalues (without indices).  In order to determine the
complete spectrum, the finite supports of $ \phi_i$ have to form a full
cover of $\mathcal Z$, which is only possible for infinitely many
sampling vectors, \ie\ $J = \infty$.  To align the recovered subsets,
we rely again on the parameter identification and phase retrieval
properties in (\ref{eq:ind-sep}--\ref{eq:wind-dir-det}).  In sum, we
obtain the following recovery guarantee for infinite-dimensional
Hilbert spaces.

\begin{theorem}
  \label{thm:sig-sys:hilbert}
  Let $\Op A \colon \Hil H \to \Hil H$ with absolutely collision-free
  eigenvalues be diagonalizable by a known
  $\Op S \colon \ell^2(\mathcal Z) \to \Hil H$, where $\Hil H$ is an
  infinite-dimensional Hilbert space and $\mathcal Z$ an infinite countable set.
   Let $\{\phi_j\}_{j=0}^{\infty} \subset \Hil H$ allows parameter
  identification and phase retrieval with finitely supported
  $\Op S^{-1}   \phi_i$, and let $ y \coloneqq \Op S^*  x$
  be elementwise non-zero for unknown $ x \in \Hil H$. Then the
  eigenvalues $\lambda_k$ with $k \in \mathcal Z$ of $\Op A$ and the
  signal $ x$ are defined by the spatiotemporal samples
  \begin{equation*}
    \bigl \{ | \langle x, \Op A^\ell  \phi_i \rangle
    | \bigr\}_{\ell,i=0}^{L_i^2-1,\infty}
    \qquad\text{with}\qquad
    L_i \coloneqq  \#[\supp (\Op S^{-1}  \phi_i)]
    \addmathskip
  \end{equation*}
  up to a global phase.
\end{theorem}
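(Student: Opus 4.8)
The plan is to follow the proof of \thref{thm:sig-sys:sampl-set} almost verbatim, the only genuinely new feature being the countable index set $\mathcal Z$ and the necessarily infinite family of sampling vectors. The decisive structural point, already isolated in the discussion preceding the statement, is that even though the spectrum $\{\lambda_k\}_{k \in \mathcal Z}$ is infinite, each sampling vector $\phi_i$ with finitely supported $\psi_i \coloneqq \Op S^{-1} \phi_i$ yields a temporal sample sequence
\begin{equation*}
  \absn{\iProdn{x}{\Op A^\ell \phi_i}_{\Hil H}}^2
  = \sum_{j,k \in \mathcal I_i} c_{i,j} \bar c_{i,k} \, (\lambda_j \bar \lambda_k)^\ell,
  \qquad \mathcal I_i \coloneqq \supp \psi_i,
\end{equation*}
which is a \emph{finite} exponential sum with at most $L_i^2$ bases. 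Hence Prony's method applies to each $i$ without modification, and, exactly as in the proof of \thref{thm:phase-eig-values}, it recovers the unblocked subset $\Lambda_i \coloneqq \{\lambda_k : k \in \mathcal I_i\}$ as an unordered set up to global phase and winding direction.

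First I would carry out this per-vector Prony recovery for every $i \in \{0,1,2,\dots\}$. Next, invoking absolute collision-freedom together with the index separation property \eqref{eq:ind-sep}, I would assign the correct index to each recovered magnitude via
\begin{equation*}
  \bigcap_{i \in \mathcal F_k} \absn{\Lambda_i} \Bigm\backslash \bigcup_{i \in \mathcal G_k} \absn{\Lambda_i} = \absn{\lambda_k},
\end{equation*}
noting that $\bigcap_{i \in \mathcal F_k} \supp \psi_i$ is contained in a single finite support and is therefore finite, so $\mathcal F_k$ and $\mathcal G_k$ may be taken finite and these set operations stay well defined. With indices fixed, the phase propagation property \eqref{eq:phase-prop} provides, for each newly adjoined block, an overlap of at least two already known eigenvalues whose relative phase is non-real by collision-freedom; this transfers the chosen global phase and winding direction uniquely from block to block, and since the full cover forces every index to appear in some $\supp \psi_i$, each $\lambda_k$ is reached after finitely many propagation steps. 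The winding direction determination property \eqref{eq:wind-dir-det} then eliminates the remaining global winding ambiguity through the real-linear system for $y_{k_1} \bar y_{k_2}$, exactly as in the finite-dimensional argument, so that all $\lambda_k$, $k \in \mathcal Z$, are determined up to a single global phase.

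Finally I would reconstruct the signal: for each $\phi_i$ the recovered coefficients $c_{i,j} \bar c_{i,k}$ give every $c_{i,k}$ with $k \in \mathcal I_i$ up to global phase, hence the entries $y_k$ through the known $\psi_{i,k}$, and the overlaps of \eqref{eq:phase-prop} glue these local reconstructions into a single $y$ on all of $\mathcal Z$ up to one global phase; then $x = (\Op S^*)^{-1} y$, since $\Op S$ and therefore $\Op S^*$ is boundedly invertible. The step I expect to be the main obstacle is not computational but conceptual, namely verifying that this infinite block-by-block procedure is globally consistent and that the assembled $y$ truly lies in $\ell^2(\mathcal Z)$. Here the a priori existence of $y \coloneqq \Op S^* x \in \ell^2(\mathcal Z)$ is essential: because the true $y$ already exists as a square-summable sequence, the reconstruction merely reads off its coordinates, and the two-fold overlaps enforce a single consistent global phase, so no separate convergence or summability question arises beyond what $x \in \Hil H$ and the boundedness of $\Op S^*$ already guarantee.
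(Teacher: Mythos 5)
Your proposal is correct and matches the paper exactly: the paper omits the proof of this theorem, stating only that it follows from the construction in the proof of Theorem~\ref{thm:sig-sys:sampl-set}, which is precisely the block-by-block Prony recovery, index separation, phase propagation, and winding-direction argument you reproduce. Your additional observation that each finitely supported $\psi_i$ reduces the temporal samples to a finite exponential sum, and that the assembled $y$ inherits square-summability from the a priori existence of $\Op S^* x$, is exactly the (unstated) justification the paper relies on.
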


Since the statement can be established with the construction in the
proof of \thref{thm:sig-sys:sampl-set}, we omit the proof.
Furthermore, \thref{rem:sig-sys:sampl-set} carries over to the
infinite-di\-men\-sion\-al setting as well.  Note that the non-zero
assumption on $ y \coloneqq \Op S^* x$ is crucial since
otherwise a part of the spectrum is blocked in all spatiotemporal
measurements and thus cannot be recovered.

An example for the infinite-dimensional Hilbert space setting is the
repeated convolution of periodic function.  For this, let $\Hil H$ be
the Hilbert space $L^2(\BT)$ of all square-integrable, one-periodic
functions on the torus $\BT$.  The convolution operator with respect
to an absolutely integrable function $a \in L^1(\BT)$ is defined by
\begin{equation*}
  \conv_a [\phi](t) \coloneqq (a * \phi)(t)
  = \int_{\BT} a(t-s) \, \phi(s) \diff s
\end{equation*}
for $t \in \BT$.  The convolution operator $\conv_a$ is here an
isomorphism on $L^2(\BT)$ due to Young's convolution
inequality, see e.g., \cite{plonka2018numerical}, and is diagonalized by the finite
Fourier transform $\Fourier \colon L^2(\BT) \to \ell^2(\BZ)$ given by
\begin{equation*}
  \Fourier [\phi](k) \coloneqq
  \hat \phi(k) \coloneqq
  \int_{\BT} \phi(t) \, \e^{-2\uppi \I k
    t} \diff t.
\end{equation*}
More precisely, we have $\Op S^{-1} = \Fourier$, $\mathcal Z =
\BZ$, and $\Lambda \colon \psi \mapsto \hat a \odot \psi$, where
$\odot$ denotes the elementwise multiplication.  Due to the support
constraints on the Fourier coefficients, the sampling vectors
$\{\phi_i\}_{i=0}^\infty$ are trigonometric polynomials.

\begin{corollary}
  \label{thm:sig-sys:torus}
  Let $a \in L^1(\BT)$ with absolutely collision-free Fourier
  coefficients $\hat a$ be unknown, let
  \raisebox{0pt}[0pt][0pt]{$\{\phi_j\}_{j=0}^{\infty}$} be a set
  of trigonometric polynomials allowing parameter identification and
  phase retrieval, and let \raisebox{0pt}[0pt][0pt]{$\hat f$} be
  elementwise non-zero for unknown $f \in L^2(\BT)$. Then $a$ and $f$
  are defined by the spatiotemporal samples
  \begin{equation*}
    \bigl \{ | \langle f, \conv_a^\ell[\phi_i] \rangle
    | \bigr\}_{\ell,i=0}^{L_i^2-1,\infty}
    \qquad\text{with}\qquad
    L_i \coloneqq  \#[\supp (\hat \phi_i)]
  \end{equation*}
  up to  global phase.
\end{corollary}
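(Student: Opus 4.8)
The plan is to recognize \thref{thm:sig-sys:torus} as the specialization of \thref{thm:sig-sys:hilbert} to the concrete Hilbert space $\Hil H = L^2(\BT)$ equipped with the convolution operator $\Op A = \conv_a$, so that the whole argument reduces to checking that every abstract hypothesis translates correctly under the Fourier diagonalization recalled just before the statement. First I would fix the data $\mathcal Z = \BZ$, $\Op S^{-1} = \Fourier$, and $\Lambda \colon \psi \mapsto \hat a \odot \psi$; by the convolution theorem the eigenvalues of $\conv_a$ are then exactly the Fourier coefficients $\lambda_k = \hat a(k)$. Boundedness of $\conv_a$ follows from Young's inequality, and, as noted before the statement, $\conv_a$ is even an isomorphism on $L^2(\BT)$, so it is a diagonalizable operator in the sense of Section~\ref{sec:infinite-dim}; since $\Op S^{-1} = \Fourier$ is known, so is $\Op S$. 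The assumption that $\hat a$ is absolutely collision-free is precisely the spectral hypothesis required.

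The single point that requires care is the translation of the abstract condition that $y \coloneqq \Op S^* x$ be elementwise non-zero into the stated hypothesis on $\hat f$. Here I would invoke that $\Fourier \colon L^2(\BT) \to \ell^2(\BZ)$ is unitary (Parseval's identity), whence $\Op S = \Fourier^{-1} = \Fourier^*$ and therefore $\Op S^* = \Fourier$. Writing $x = f$, this gives $y = \Op S^* x = \Fourier f = \hat f$, so the abstract non-vanishing of $y$ is exactly the assumption that $\hat f$ is elementwise non-zero. The remaining conditions are immediate: each $\phi_i$ being a trigonometric polynomial makes $\Op S^{-1} \phi_i = \hat \phi_i$ finitely supported, which both supplies the finite supports demanded by \thref{thm:sig-sys:hilbert} and identifies $L_i = \#[\supp \hat \phi_i]$, while the parameter identification and phase retrieval properties (\ref{eq:ind-sep}--\ref{eq:wind-dir-det}) are assumed verbatim.

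With every hypothesis verified, \thref{thm:sig-sys:hilbert} applies and returns the eigenvalues $\hat a(k)$, hence via the inverse Fourier transform the kernel $a$, together with the signal $f$, all up to a global phase, from the stated spatiotemporal samples. I expect the main---and essentially only---obstacle to be this unitarity bookkeeping, namely keeping the roles of $\Op S^{-1} = \Fourier$ and $\Op S^* = \Fourier$ apart so that the analysis sequence $y$ is correctly identified with $\hat f$ rather than with another transform of $f$; once this is secured, the corollary is a pure instantiation of the infinite-dimensional theorem.
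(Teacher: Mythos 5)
Your proposal is correct and follows exactly the route the paper intends: the corollary is presented as a direct instantiation of Theorem~\ref{thm:sig-sys:hilbert} with $\Op S^{-1} = \Fourier$, $\mathcal Z = \BZ$, $\Lambda \colon \psi \mapsto \hat a \odot \psi$, and your identification $y = \Op S^* x = \Fourier f = \hat f$ via unitarity of $\Fourier$ is precisely the bookkeeping the paper leaves implicit. Nothing further is needed.
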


The proposed eigenvalue and signal identification can be generalized
to arbitrary Banach spaces $\Hil X$ that are isomorphic to a sequence
space like $\ell^p(\mathcal Z)$.  In this case, the inner products
have to be replaced by appropriate dual pairings.

\section{Sensitivity analysis}
\label{sec:sens-analysis}

In the previous sections, we have shown that the dynamical phase
retrieval and system identification problem is solvable under certain
assumptions from exact measurements.  In the following, we study the
situation for disturbed measurements.  Since our constructive proofs
have been heavily based on Prony's method, the sensitivity also mainly
depends on it.  On the bases of Potts \& Tasche \cite{PT10},
initially, the sensitivity of the approximate Prony method is
considered; hereby, we follow the proofs of \cite{PT10} for
real-valued exponential sums and generalize to the complex setting.
In a second step, we analyse the error propagation in dynamical phase
retrieval.

\subsection{Sensitivity of Prony's method}
\label{sec:sens-prony}

Essentially, the (approximate) Prony method is a two step approach to
determine the parameters of the exponential sum \eqref{eq:exp-sum}.
In the first step, the unknown bases $\Vek \beta$ are recovered using
a singular value decomposition and determining the roots of the Prony
polynomial.  In the second, the unknown coefficients $\Vek \eta$ are
computed by solving a linear least-square problem.  To analyse the
sensitivity of the first step, we require the following lemma
estimating the norm of a rectangular Vandermonde matrix by the maximal
radius of the bases 
\begin{equation*}
  \rho_{\Vek \beta} \coloneqq \max \{1, \pNormn{\Vek \beta}_\infty\}.
\end{equation*}

\begin{lemma}
  \label{lem:norm-vander}
  For $\Vek \beta \in \BC^K$, the Vandermonde matrix
  $\Mat V_L(\Vek \beta)$ satisfies
  \begin{equation*}
    \pNormn{\Mat V_L(\Vek \beta)}_{\infty} \leq K \rho_{\Vek
    \beta}^{L-1},
    \qquad
    \pNormn{\Mat V_L(\Vek \beta)}_{1} \leq L \rho_{\Vek
      \beta}^{L-1},
  \end{equation*}
  and thus
  \begin{equation*}
    \pNormn{\Mat V_L(\Vek \beta)}_2
    \le \sqrt{KL} \rho_{\Vek \beta}^{L-1}. 
  \end{equation*}
\end{lemma}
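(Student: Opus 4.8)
The plan is to bound the matrix norms entrywise and then combine them.

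First I would establish the $\infty$-norm bound, which is the maximum absolute row sum. Row $\ell$ of $\Mat V_L(\Vek \beta)$ is $(\beta_k^\ell)_{k=0}^{K-1}$, so its absolute sum is $\sum_{k=0}^{K-1} |\beta_k|^\ell$. Each term satisfies $|\beta_k|^\ell \le \rho_{\Vek \beta}^\ell$ since $\rho_{\Vek \beta} = \max\{1, \pNormn{\Vek \beta}_\infty\} \ge |\beta_k|$. As $\ell$ ranges over $0, \dots, L-1$, the largest exponent is $L-1$, and because $\rho_{\Vek \beta} \ge 1$ we have $\rho_{\Vek \beta}^\ell \le \rho_{\Vek \beta}^{L-1}$ for all such $\ell$. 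Summing over the $K$ columns gives the row sum at most $K \rho_{\Vek \beta}^{L-1}$, and taking the maximum over rows yields $\pNormn{\Mat V_L(\Vek \beta)}_\infty \le K \rho_{\Vek \beta}^{L-1}$.

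Next I would treat the $1$-norm, the maximum absolute column sum. Column $k$ is $(\beta_k^\ell)_{\ell=0}^{L-1}$, with absolute sum $\sum_{\ell=0}^{L-1} |\beta_k|^\ell$. Again bounding each term by $\rho_{\Vek \beta}^\ell \le \rho_{\Vek \beta}^{L-1}$ and summing the $L$ terms gives a column sum at most $L \rho_{\Vek \beta}^{L-1}$, hence $\pNormn{\Mat V_L(\Vek \beta)}_1 \le L \rho_{\Vek \beta}^{L-1}$. Both estimates are crude but suffice; the only point to watch is the role of $\rho_{\Vek \beta} \ge 1$, which ensures that replacing the exponent $\ell$ by $L-1$ only enlarges each term. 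This monotonicity is precisely why the definition uses the maximum with $1$ rather than $\pNormn{\Vek\beta}_\infty$ alone.

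Finally, the $2$-norm bound follows from the standard interpolation inequality $\pNormn{\Mat M}_2 \le \sqrt{\pNormn{\Mat M}_1 \pNormn{\Mat M}_\infty}$, valid for any matrix (a consequence of $\pNormn{\Mat M}_2^2 = \rho(\Mat M^* \Mat M) \le \pNormn{\Mat M^* \Mat M}_\infty \le \pNormn{\Mat M^*}_\infty \pNormn{\Mat M}_\infty = \pNormn{\Mat M}_1 \pNormn{\Mat M}_\infty$). Substituting the two bounds gives $\pNormn{\Mat V_L(\Vek \beta)}_2 \le \sqrt{K\rho_{\Vek\beta}^{L-1} \cdot L \rho_{\Vek\beta}^{L-1}} = \sqrt{KL}\,\rho_{\Vek\beta}^{L-1}$, as claimed. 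I expect no real obstacle here; the entire argument is elementary, and the only subtlety is correctly invoking the $\rho_{\Vek\beta} \ge 1$ convention so that the worst-case exponent is $L-1$.
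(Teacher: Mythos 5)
Your proof is correct and follows essentially the same route as the paper: bounding the maximum absolute row and column sums by $K\rho_{\Vek\beta}^{L-1}$ and $L\rho_{\Vek\beta}^{L-1}$ respectively using $\rho_{\Vek\beta}\ge 1$, and then invoking $\pNormn{\Mat M}_2 \le \sqrt{\pNormn{\Mat M}_1\pNormn{\Mat M}_\infty}$. The only difference is that you also justify this last interpolation inequality, which the paper takes as known.
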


\begin{proof}
  The assertion immediately follows from
  \begin{align*}
    &\| \Mat V_L (\Vek \beta) \|_\infty
    \leq \max_{0 \le \ell < L} \sum_{k=0}^{K-1} |\beta_k|^\ell
      \leq K \max_{0 \le \ell < L} \| \Vek \beta \|_\infty^{\ell}
      \leq K \max\{ 1, \pNormn{\Vek \beta}^{L-1}_\infty \},
    \\[\fskip]
    &\| \Mat V_L (\Vek \beta) \|_1
      \leq \max_{0 \le k < K} \sum_{\ell=0}^{L-1} |\beta_k|^\ell
      \leq L \max_{0 \le k < K} \Bigl(
      \max\{1, \beta_k^{L-1} \} \Bigr)
      \leq L \max\{ 1, \pNormn{\Vek \beta}^{L-1}_\infty \},
    \\[\fskip]
    &\pNormn{\Mat V_L(\Vek \beta)}_2
      \le \sqrt{\pNormn{\Mat V_L(\Vek
      \beta)}_1\pNormn{\Mat V_L(\Vek \beta)}_\infty}.
      \tag*{\qed}
  \end{align*}
\end{proof}

Further, we need a left inverse of the rectangular Vandermonde matrix.
The inverse of a quadratic Vandermonde matrix has been well studied in
the literature \cite{MS58,Tur66,Gau62,Gau75,EP81,ElM03,Pan16,HCP19}
and is given by
\begin{equation}
  \label{eq:inv-vander}
  \Vek V^{-1}(\Vek \beta) =
  \Bigl( (-1)^{K-k-1} \, S^{(\ell)}_{K-k-1} (\Vek \beta)
  \Bigm\slash \Pi_\ell (\Vek \beta) \Bigr)_{\ell,k=0}^{K-1},
\end{equation}
where $S_k^{(\ell)}$ denotes the $k$th elementary symmetric polynomial
without the $\ell$th variable, which is more precisely defined by
\begin{equation*}
  S_k^{(\ell)}(\Vek \beta)
  = \smashoperator{
    \sum_{\substack{
        \hspace{10pt}
        0 \le j_1 < \cdots < j_k \le K-1
        \\
        \hspace{11pt}
        j_1,\dots,j_k \ne \ell
      }}} 
  \beta_{j_1} \dots \beta_{j_k}
  \qquad\text{and}\qquad
  S_0^{(\ell)}(\Vek \beta) = 1,
\end{equation*}
and where $\Pi_\ell$ is the product of differences
\begin{equation*}
  \Pi_\ell (\Vek \beta) \coloneqq \prod_{\substack{k=0\\k \ne \ell}}^{K-1}
  (\beta_\ell - \beta_k).
\end{equation*}
The classical elementary symmetric polynomials are based on all
elements of $\Vek \beta$, \ie\ without the condition $j_1,\dots, j_k
\ne \ell$, and are denoted by $S_k(\Vek \beta)$.

\begin{lemma}[Gautschi \cite{Gau62}]
  \label{lem:ele-sym-poly}
  The elementary symmetric polynomial are bounded by
  \begin{equation*}
    \sum_{k=0}^{K-1} \absn{S_k(\Vek \beta)}
    \le \prod_{k=0}^{K-1} ( 1 + \absn{\beta_k}).
  \end{equation*}
\end{lemma}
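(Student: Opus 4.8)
The plan is to derive the bound from the generating-function identity for elementary symmetric polynomials combined with a termwise triangle inequality. First I would recall that, for a formal variable $t$,
\[
  \prod_{k=0}^{K-1}(1 + \beta_k t) = \sum_{j=0}^{K} S_j(\Vek\beta)\, t^j,
\]
which merely expresses that the coefficient of $t^j$ on the left is the sum of all products of $j$ distinct entries of $\Vek\beta$, \ie\ exactly $S_j(\Vek\beta)$ by definition.

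Next I would bound each symmetric polynomial by its absolute-value counterpart. Since $S_j(\Vek\beta)$ is a sum of products $\beta_{k_1}\cdots\beta_{k_j}$ over strictly increasing index tuples, the triangle inequality yields
\[
  \absn{S_j(\Vek\beta)}
  \le \sum_{0 \le k_1 < \cdots < k_j \le K-1}
  \absn{\beta_{k_1}}\cdots\absn{\beta_{k_j}}
  = S_j(\absn{\beta_0},\dots,\absn{\beta_{K-1}}),
\]
the last equality being the definition of $S_j$ evaluated at the nonnegative vector $(\absn{\beta_k})_{k=0}^{K-1}$.

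Finally I would sum over $j$ and evaluate the identity at $t=1$ with nonnegative arguments, obtaining
\[
  \sum_{j=0}^{K}\absn{S_j(\Vek\beta)}
  \le \sum_{j=0}^{K} S_j(\absn{\beta_0},\dots,\absn{\beta_{K-1}})
  = \prod_{k=0}^{K-1}(1 + \absn{\beta_k}).
\]
The stated inequality, whose left-hand side runs only to $K-1$, then follows at once, since it simply omits the nonnegative term $\absn{S_K(\Vek\beta)}$ from this stronger estimate. I expect essentially no obstacle here: the only points requiring care are the bookkeeping of the generating-function expansion and the observation that $S_j$ is monotone under replacing each $\beta_k$ by $\absn{\beta_k}$, both of which are routine.
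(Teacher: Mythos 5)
Your argument is correct and is essentially the paper's own proof in different notation: the generating-function identity $\prod_{k}(1+\beta_k t)=\sum_j S_j(\Vek\beta)\,t^j$ evaluated at $t=1$ is exactly Vieta's formula $\prod_k(z-\beta_k)=\sum_k(-1)^k S_k(\Vek\beta)\,z^{K-k}$ evaluated at $z=-1$, and both proofs reduce the complex case to the nonnegative one via $\absn{S_k(\Vek\beta)}\le S_k(\absn{\Vek\beta})$. No issues.
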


\begin{proof}
  For convenience, we give the brief proof  from \cite{Gau62}.  On the
  bases of Vieta's formula, the elementary symmetric polynomials
  are related to the polynomial
  \begin{equation*}
    z \mapsto
    \sum_{k=0}^{K-1} (-1)^k \, S_k(\Vek \beta) \, z^{K-k-1}
    = \prod_{k=0}^{K-1} (z - \beta_k).
  \end{equation*}
  Choosing $z = -1$, we obtain the assertion for real and positive
  $\beta_k$, $k = 0, \dots, K-1$. The general assertion then follows
  from $\absn{S_k(\Vek \beta)} \le S_k(\absn{\Vek \beta})$, where
  $\absn{\cdot}$ is applied elementwise.  \qed
\end{proof}

Defining the product radius $\pi_{\Vek \beta}$ and the minimal separation
$\sigma_{\Vek \beta}$ of the bases in $\Vek \beta$ as
\begin{equation*}
  \pi_{\Vek \beta} \coloneqq \prod_{k=0}^{K-1} ( 1 + \absn{\beta_k} )
  \qquad \text{and} \qquad
  \sigma_{\Vek \beta} \coloneqq
  \min \{ \absn{\beta_\ell - \beta_k} : 0 \le \ell < k \le K-1 \},
\end{equation*}
the norm of the inverse Vandermonde matrix is bounded as follows.

\begin{proposition}
  \label{prop:norm-inv-vander}
  For $\Vek \beta \in \BC_*^K$ with distinct elements, the inverse of
  the quadratic Vandermonde matrix $\Mat V(\Vek \beta)$ satisfies
  \begin{equation*}
    \pNormn{\Mat V^{-1}(\Vek \beta)}_\infty
    \le \frac{\pi_{\Vek \beta}}{\sigma_{\Vek \beta}^{K-1}}.
  \end{equation*}
\end{proposition}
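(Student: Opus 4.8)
The plan is to work directly from the explicit formula \eqref{eq:inv-vander} for the inverse Vandermonde matrix, bounding the $\infty$-norm (the maximum absolute row sum) by controlling each entry. Recall that the $\infty$-norm of a matrix is
$\pNormn{\Mat V^{-1}(\Vek \beta)}_\infty = \max_{\ell} \sum_{k=0}^{K-1} \absn{(\Mat V^{-1})_{\ell,k}}$, so I would fix a row index $\ell$ and estimate $\sum_{k=0}^{K-1} \absn{(-1)^{K-k-1} S^{(\ell)}_{K-k-1}(\Vek \beta) / \Pi_\ell(\Vek \beta)}$. The denominator $\absn{\Pi_\ell(\Vek \beta)}$ is constant along the row, so it factors out, and the task reduces to bounding the numerator sum $\sum_{k=0}^{K-1} \absn{S^{(\ell)}_{K-k-1}(\Vek \beta)}$ from above and $\absn{\Pi_\ell(\Vek \beta)}$ from below.

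First I would reindex the numerator sum. As $k$ runs from $0$ to $K-1$, the index $K-k-1$ runs over $0,\dots,K-1$, so $\sum_{k=0}^{K-1}\absn{S^{(\ell)}_{K-k-1}(\Vek \beta)} = \sum_{m=0}^{K-1}\absn{S^{(\ell)}_{m}(\Vek \beta)}$. The polynomials $S^{(\ell)}_m$ are elementary symmetric polynomials in the $K-1$ variables $\{\beta_k : k \neq \ell\}$, so \thref{lem:ele-sym-poly} applies verbatim to this reduced variable set, giving
$\sum_{m=0}^{K-1}\absn{S^{(\ell)}_{m}(\Vek \beta)} \le \prod_{\substack{k=0\\k\neq\ell}}^{K-1}(1+\absn{\beta_k}) \le \prod_{k=0}^{K-1}(1+\absn{\beta_k}) = \pi_{\Vek \beta}$, where the second inequality just adds the nonnegative missing factor $(1+\absn{\beta_\ell})\ge 1$. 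This yields the product radius in the numerator. For the denominator, I would bound each difference $\absn{\beta_\ell - \beta_k}$ from below by the minimal separation $\sigma_{\Vek \beta}$; since $\Pi_\ell$ is a product of $K-1$ such differences, $\absn{\Pi_\ell(\Vek \beta)} = \prod_{k\neq\ell}\absn{\beta_\ell - \beta_k} \ge \sigma_{\Vek \beta}^{K-1}$. Combining the two estimates over any row gives the claimed bound $\pi_{\Vek \beta}/\sigma_{\Vek \beta}^{K-1}$.

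I do not expect any single step to be a serious obstacle; this is essentially an assembly of \thref{lem:ele-sym-poly} with the explicit inverse formula. The one point requiring a little care is the reindexing together with the observation that $S^{(\ell)}_m$ is genuinely an elementary symmetric polynomial in $K-1$ variables, so that \thref{lem:ele-sym-poly} can be invoked with $K$ replaced by $K-1$ on the punctured variable set rather than being applied naively to all $K$ variables. A minor subtlety is confirming that the upper summation index $m=K-1$ in the reindexed sum does not exceed the degree available among $K-1$ variables: the top term $S^{(\ell)}_{K-1}$ over only $K-1$ variables vanishes, so the sum effectively stops at $m=K-2$, which is harmless and only makes the bound slightly loose. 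Everything else is a routine factorization of the constant denominator out of the row sum and two monotone inequalities, so the whole argument should be compact.
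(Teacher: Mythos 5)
Your proposal is correct and follows essentially the same route as the paper: the explicit inversion formula \eqref{eq:inv-vander}, Gautschi's bound (\thref{lem:ele-sym-poly}) applied to the punctured symmetric polynomials $S^{(\ell)}_m$ with the missing factor $(1+\absn{\beta_\ell})\ge 1$ restoring the full product $\pi_{\Vek\beta}$, and the lower bound $\absn{\Pi_\ell(\Vek\beta)}\ge\sigma_{\Vek\beta}^{K-1}$. Your write-up merely spells out the reindexing and the vanishing of the top-degree term more explicitly than the paper's terse argument.
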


\begin{proof}
  The bound follows immediately from the inversion formula
  \eqref{eq:inv-vander} and from applying \thref{lem:ele-sym-poly} to
  the sum over the elementary symmetric polynomials $S_k^{(\ell)}$
  with fixed $\ell$ as well as multiplying the estimated for the row
  sums by the missing factor $(1 + \absn{\beta_\ell}) > 1$.  \qed
\end{proof}

The norm estimates regarding the Vandermonde matrix allow us to study
the quality of the Prony polynomial for perturbed measurements.  If
the error is small, the true bases are nearly roots; so we may hope
that the first two steps of \thref{alg:app-prony} approximate the
bases well.  Recall that the approximate Prony method is based on the
assumption that the measurement error $\epsilon$ with
$\absn{h_\ell + e_\ell} \le \epsilon$ is small enough such that the
singular values of the unperturbed Hankel matrix fulfil
$\sigma_k(\Mat H) \ge 2 \pNormn{\Mat E}_2$.  The spectral norm is here
bounded by
\begin{equation*}
  \pNormn{\Mat E}_2
  \le \sqrt{\pNormn{\Mat E}_1 \pNormn{\Mat E}_\infty}
  \le \sqrt{(L-K)(K+1)} \, \epsilon
  \le (L + 1) \, \nicefrac\epsilon2.
\end{equation*}

\begin{theorem}
  \label{the:weight-pert-zeros}
  Let $L > 2K$, and let $\tilde{\Vek \gamma}$ be a normalized right
  singular vector to the smallest singular value $\tilde \sigma_{K}$
  of the perturbed Hankel matrix \eqref{eq:dist-hankel} with respect
  to the exponential sum \eqref{eq:exp-sum}.  Then the corresponding
  polynomial $\tilde P(z) = \sum_{k=0}^K \tilde \gamma_k z^k$
  satisfies
  \begin{equation*}
    \sum_{k=0}^{K-1} \absn{\eta_k}^2 \absn{\tilde P(\beta_k)}^2
    \le  L \, \biggl(\frac{\pi_{\Vek \beta}}{\sigma_{\Vek
        \beta}^{K-1}}\biggr)^2 
    \, \bigl(\tilde \sigma_K + \pNormn{\Mat E}_2\bigr)^2.
  \end{equation*}
\end{theorem}

\begin{proof}
  Let $\tilde{\Vek \nu}$ be the corresponding left singular vector,
  \ie\
  $\tilde{\Mat H} \tilde{\Vek \gamma} = \tilde \sigma_K \tilde{\Vek
    \nu}$.  Incorporating \eqref{eq:dist-hankel} and
  \eqref{eq:exp-sum} into this equation, we obtain
  \begin{equation*}
    \tilde \sigma_K \tilde \nu_\ell
    = \sum_{k=0}^K  \tilde h_{\ell + k} \tilde \gamma_k
    = \sum_{k=0}^K  ( h_{\ell + k} + e_{\ell +k }) \, \tilde \gamma_k
    = \sum_{j=0}^{K-1} \eta_j \beta_j^\ell \, \tilde P(\beta_j)
    + \sum_{k=0}^K e_{\ell + k} \tilde \gamma_k
  \end{equation*}
  for $\ell = 0, \dots, L-K-1$.  In matrix-vector form, these
  equations are given by
  \begin{equation*}
    \Mat V_{L-K}(\Vek \beta) \,
    \Bigl( \eta_j \tilde P(\beta_j) \Bigr)_{j=0}^{K-1}
    = \tilde \sigma_K \tilde{\Vek \nu} - \Mat E \tilde{\Vek \gamma}.
  \end{equation*}
  Multiplying with the left inverse
  $\Mat V_{L-K}^+ (\Vek \beta) \coloneqq \bigl(
  \begin{smallmatrix}
    \Mat V^{-1} (\Vek \beta) \\
    \Mat 0_{L-2K,K}
  \end{smallmatrix}
  \bigr)$,
  we obtain
  \begin{equation*}
    \Bigl( \eta_j \tilde P(\beta_j) \Bigr)_{j=0}^{K-1}
    = \Mat V_{L-K}^+(\Vek \beta) \, ( \tilde \sigma_K \tilde{\Vek \nu}
    - \Mat E \tilde{\Vek \gamma}). 
  \end{equation*}
  Taking the squared Euclidean norm, bounding the spectral norm by the
  row-sum norm, and applying \thref{prop:norm-inv-vander} yields the
  assertion.  \qed
\end{proof}

\begin{theorem}
  \label{the:pert-zeros}
  Let $L > 2K$, let $\tilde{\Vek \gamma}$ be a normalized right
  singular vector to the smallest singular value $\tilde \sigma_{K}$
  of the perturbed Hankel matrix \eqref{eq:dist-hankel} with
  $\absn{h_\ell - \tilde h_\ell} \le \epsilon$, and let $\sigma_{K-1}$
  be the smallest non-zero singular value of the unperturbed Hankel
  matrix \eqref{eq:hankel}. Then the corresponding polynomial
  $\tilde P(z) = \sum_{k=0}^K \tilde \gamma_k z^k$ satisfies
  \begin{equation*}
    \sum_{k=0}^{K-1} \absn{\tilde P(\beta_k)}^2
    \le  KL \, \rho_{\Vek \beta}^{2L-2}
    \; \frac{\bigl(\tilde{\sigma}_K
      + \pNormn{\Mat E}_2\bigr)^2}{\sigma_{K-1}^2} 
  \end{equation*}
\end{theorem}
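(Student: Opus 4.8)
The plan is to discard the coefficient weights $\absn{\eta_k}^2$ appearing in \thref{the:weight-pert-zeros} by projecting $\tilde{\Vek\gamma}$ onto the row space of the unperturbed Hankel matrix and exploiting that $\ker(\Mat H)$ is spanned by the \emph{exact} Prony vector. First I would reuse the identity already obtained in the proof of \thref{the:weight-pert-zeros}: since $\tilde{\Mat H}\tilde{\Vek\gamma} = \tilde\sigma_K \tilde{\Vek\nu}$ for the associated normalized left singular vector $\tilde{\Vek\nu}$, and $\tilde{\Mat H} = \Mat H + \Mat E$, we get
\begin{equation*}
  \Mat H \tilde{\Vek\gamma} = \tilde\sigma_K \tilde{\Vek\nu} - \Mat E \tilde{\Vek\gamma},
\end{equation*}
so that $\pNormn{\tilde{\Vek\nu}}_2 = \pNormn{\tilde{\Vek\gamma}}_2 = 1$ yields the residual bound $\pNormn{\Mat H \tilde{\Vek\gamma}}_2 \le \tilde\sigma_K + \pNormn{\Mat E}_2$.

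Next I would decompose $\tilde{\Vek\gamma} = \tilde{\Vek\gamma}_\parallel + \tilde{\Vek\gamma}_\perp$ orthogonally, where $\tilde{\Vek\gamma}_\perp \in \ker(\Mat H)$ and $\tilde{\Vek\gamma}_\parallel$ lies in the orthogonal complement, \ie\ the row space of $\Mat H$. By \thref{lem:singular-value-and-roots} the matrix $\Mat H$ has rank $K$, so its smallest non-zero singular value is $\sigma_{K-1}$; since $\tilde{\Vek\gamma}_\parallel$ is orthogonal to the kernel, $\pNormn{\Mat H \tilde{\Vek\gamma}}_2 = \pNormn{\Mat H \tilde{\Vek\gamma}_\parallel}_2 \ge \sigma_{K-1} \pNormn{\tilde{\Vek\gamma}_\parallel}_2$, and hence
\begin{equation*}
  \pNormn{\tilde{\Vek\gamma}_\parallel}_2
  \le \frac{\tilde\sigma_K + \pNormn{\Mat E}_2}{\sigma_{K-1}}.
\end{equation*}

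The decisive observation is that the evaluation vector $(\tilde P(\beta_k))_{k=0}^{K-1} = \Mat V_{K+1}^\T(\Vek\beta)\,\tilde{\Vek\gamma}$ only depends on $\tilde{\Vek\gamma}_\parallel$. Indeed, by \thref{lem:singular-value-and-roots} the kernel $\ker(\Mat H)$ is spanned by the exact Prony coefficient vector $\Vek\gamma$, whose roots are precisely the $\beta_k$, so $\Mat V_{K+1}^\T(\Vek\beta)\,\Vek\gamma = (P(\beta_k))_{k=0}^{K-1} = \Vek 0$; thus the kernel part $\tilde{\Vek\gamma}_\perp$ is annihilated and $(\tilde P(\beta_k))_k = \Mat V_{K+1}^\T(\Vek\beta)\,\tilde{\Vek\gamma}_\parallel$. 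Since $\Mat V_{K+1}(\Vek\beta)$ consists of the first $K+1$ rows of $\Mat V_L(\Vek\beta)$ (recall $L > 2K > K+1$), deleting rows cannot increase the spectral norm, so \thref{lem:norm-vander} gives $\pNormn{\Mat V_{K+1}^\T(\Vek\beta)}_2 \le \pNormn{\Mat V_L(\Vek\beta)}_2 \le \sqrt{KL}\,\rho_{\Vek\beta}^{L-1}$. Combining this with the previous estimate and squaring yields the claimed bound.

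The conceptual heart -- and essentially the only subtle point -- is recognizing that the weighted factorization driving \thref{the:weight-pert-zeros} can be bypassed: rather than applying a left inverse of $\Mat V_{L-K}(\Vek\beta)\diag(\Vek\eta)$ (which reintroduces the factors $1/\eta_k$), one isolates the unweighted vector $(\tilde P(\beta_k))_k$ through the orthogonal projection onto the row space of $\Mat H$, whose kernel direction $\Vek\gamma$ is exactly the one on which $\tilde P$ would vanish identically. Everything remaining is routine norm bookkeeping via \thref{lem:norm-vander}.
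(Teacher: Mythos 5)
Your proof is correct and is essentially the paper's own argument in different notation: your orthogonal decomposition $\tilde{\Vek\gamma} = \tilde{\Vek\gamma}_\parallel + \tilde{\Vek\gamma}_\perp$ is exactly the paper's choice $\Vek\gamma \coloneqq \proj_{\ker \Mat H}\tilde{\Vek\gamma}$ (so that $\tilde{\Vek\gamma}_\parallel = \tilde{\Vek\gamma}-\Vek\gamma$), and both arguments then combine the residual bound $\pNormn{\Mat H\tilde{\Vek\gamma}}_2\le\tilde\sigma_K+\pNormn{\Mat E}_2$, the lower bound $\sigma_{K-1}\pNormn{\tilde{\Vek\gamma}_\parallel}_2\le\pNormn{\Mat H\tilde{\Vek\gamma}_\parallel}_2$ on the orthogonal complement of the kernel, and the Vandermonde norm estimate of \thref{lem:norm-vander}. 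Your explicit submatrix justification for $\pNormn{\Mat V_{K+1}^\T(\Vek\beta)}_2\le\pNormn{\Mat V_L(\Vek\beta)}_2$ is in fact slightly more careful than the paper's wording, but the route is the same.
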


\begin{proof}
  First assume $\tilde{\Vek \gamma} \notin \ker \Mat H$.  Letting
  $\Vek \gamma \coloneqq \proj_{\ker \Mat H} \tilde{\Vek \gamma}$, the
  projection $\Vek \gamma$ is a maybe not normalized right singular
  vector for the singular value zero.
  \thref{lem:singular-value-and-roots} implies that the polynomial
  $P(z) \coloneqq \sum_{k=0}^{K} \gamma_k z^k$ has the roots
  $\beta_0, \dots,\beta_{K-1}$. Therefore, we can write
  \begin{equation*}
    \sum_{k=0}^{K-1}  | \tilde{P} (\beta_k) |^2
    = \sum_{k=0}^{K-1}  | \tilde{P}(\beta_k) - P (\beta_k)|^2
    = \| \Mat V^\T(\Vek\beta) \, \tilde{\Vek \gamma} -
    \Mat V^\T(\Vek\beta) \, \Vek \gamma \|_2^2
    \leq \| \Mat V (\Vek\beta) \|_2^2 \,
    \| \tilde{\Vek \gamma} - \Vek \gamma \|^2_2. 
  \end{equation*}
  Now since
  $(\tilde{\Vek \gamma} - \Vek \gamma) \perp \ker \Mat H$, we
  obtain
  \begin{equation*}
    \sigma^2_{K-1} \| \tilde{\Vek \gamma} - \Vek \gamma \|^2_2
    \leq \| {\Mat H}(\tilde{\Vek \gamma} - \Vek \gamma)\|^2_2
    = \|(\tilde{\Mat H} - \Mat E) \tilde{\Vek \gamma}\|^2_2
    \le \bigl(\tilde \sigma_{K} + \pNormn{\Mat E}_2 \bigr)^2.
  \end{equation*}
  Combining the above inequalities, and applying
  \thref{lem:norm-vander}, we establish the assertion.  For the
  remaining case $\tilde{\Vek \gamma} \in \ker \Mat H$, the bases
  $\beta_k$ are roots of $\tilde P$ by
  \thref{lem:singular-value-and-roots}.  \qed
\end{proof}

\begin{remark}
  The above Theorems \ref{the:weight-pert-zeros} and
  \ref{the:pert-zeros} essentially state that the true bases are
  nearly roots of the perturbed Prony polynomial.  Therefore, we
  nurture the hope that the perturbed roots are close.  Although this
  seems plausible for generic polynomials, we can construct
  pathological cases of very sensitive polynomials, where already
  slight disturbances of the coefficients have tremendous effects on
  the roots.  In \cite{tang2017}, the author tries to establishes an
  explicit bound on the reconstruction error regarding the roots of
  the Prony polynomial, which we initially wanted to adapt to our
  setting.  Unfortunately, the key theorem studying a linear
  perturbation of the coefficient of a polynomial cannot be applied to
  our setting since here the perturbations $e_\ell$ in the
  measurements $\tilde h_\ell = h_\ell + e_\ell$ lead to non-linear
  perturbations of the coefficients in the Prony
  polynomial.  \qed
\end{remark}

In the third step of Prony's method, the coefficients $\Vek \eta$ of
the exponential sum \eqref{eq:exp-sum} are determined by solving
$\Mat V_{L}(\Vek \beta) \, \Vek \eta = \tilde{\Vek h}$ in the
least-square sense, \ie\ we have to determine the minimizer of
$\pNormn{\Mat V_{L}(\Vek \beta) \, \Vek \eta - \tilde{\Vek h}}_2$.
The minimizer is given by
$\Mat V_{L}^\dagger (\Vek \beta) \, \tilde{\Vek h}$, where
\begin{equation*}
  \Mat V_{L}^\dagger(\Vek \beta)
  = (\Mat V_{L}^*(\Vek \beta) \, \Mat V_{L}(\Vek
  \beta))^{-1} \, \Mat V_{L}^*(\Vek \beta)
\end{equation*}
is the Moore--Penrose inverse.  To estimate the reconstruction error
with respect to $\Vek \eta$, we need to estimate the norm of the
Moore--Penrose inverse.  For this, we exploit that the Moore--Penrose
inverse is the zero continuation of the inverse with respect to the
range of the orthogonal complement of the kernel.  For an arbitrary
full-rank matrix, the Moore--Penrose inverse is therefore the left
inverse with the smallest norm.

\begin{proposition}
  \label{prop:moore-penrose}
  Let $\Mat A \in \BC^{L \times K}$ with $L \ge K$ be a full-rank
  matrix, and let $\Mat A^+$ be an arbitrary left inverse.
  For every $1 \le p \le \infty$, the Moore--Penrose inverse then satisfies
  \begin{equation*}
    \pNormn{\Mat A^\dagger}_p \le \pNormn{\Mat A^+}_p.
  \end{equation*}
\end{proposition}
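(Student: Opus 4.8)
The plan is to exploit the two defining features of the Moore--Penrose inverse: it is a left inverse, and it vanishes on the orthogonal complement of the range of $\Mat A$. First I would characterise all left inverses. Since $\Mat A^+\Mat A = \Mat I = \Mat A^\dagger\Mat A$, every left inverse has the form $\Mat A^+ = \Mat A^\dagger + \Mat C$ with $\Mat C\Mat A = \Mat 0$, i.e.\ with $\Mat C$ annihilating the range of $\Mat A$. Writing $\Mat Q \coloneqq \Mat A\Mat A^\dagger$ for the orthogonal projection onto the range of $\Mat A$, the identities $\Mat A^\dagger\Mat Q = \Mat A^\dagger\Mat A\Mat A^\dagger = \Mat A^\dagger$ and $\Mat C\Mat Q = \Mat C\Mat A\Mat A^\dagger = \Mat 0$ show that all left inverses coincide on the range of $\Mat A$ and that $\Mat A^\dagger = \Mat A^+\Mat Q$; this is the precise meaning of the ``zero continuation'' mentioned before the statement.

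For the spectral norm $p = 2$ this already closes the argument in a clean, basis-free way. Because the columns of $(\Mat A^\dagger)^* = \Mat A(\Mat A^*\Mat A)^{-1}$ lie in the range of $\Mat A$, we get $\Mat C(\Mat A^\dagger)^* = \Mat C\Mat A(\Mat A^*\Mat A)^{-1} = \Mat 0$, and therefore
\[
  \Mat A^+(\Mat A^+)^* = \Mat A^\dagger(\Mat A^\dagger)^* + \Mat C\Mat C^* \succeq \Mat A^\dagger(\Mat A^\dagger)^*.
\]
Passing to the largest eigenvalue gives $\pNormn{\Mat A^+}_2^2 = \lambda_{\max}(\Mat A^+(\Mat A^+)^*) \ge \lambda_{\max}(\Mat A^\dagger(\Mat A^\dagger)^*) = \pNormn{\Mat A^\dagger}_2^2$. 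In fact the positive-semidefinite ordering forces $\sigma_k(\Mat A^+) \ge \sigma_k(\Mat A^\dagger)$ for every $k$ by Weyl monotonicity, so the same conclusion would even hold for any unitarily invariant norm. I would also note that $p = 2$ follows immediately from $\Mat A^\dagger = \Mat A^+\Mat Q$ together with $\pNormn{\Mat Q}_2 = 1$.

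The hard part will be the remaining values $p \neq 2$, and I expect this to be the main obstacle. The only tool delivered by the structural step is the submultiplicative bound $\pNormn{\Mat A^\dagger}_p = \pNormn{\Mat A^+\Mat Q}_p \le \pNormn{\Mat A^+}_p\,\pNormn{\Mat Q}_p$, but an orthogonal projection is a contraction only in the Euclidean norm: for $p \neq 2$ one generally has $\pNormn{\Mat Q}_p \ge 1$, and this stray factor cannot simply be discarded. Consequently the clean chain above does not survive, and it is no longer clear that $\Mat A^\dagger$ is the $p$-norm minimiser among all left inverses; for the induced $1$- and $\infty$-norms the minimiser is governed by the dual geometry of the $\ell^1$- and $\ell^\infty$-norms rather than by the orthogonal splitting of $\BC^L$. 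I would therefore expect the general-$p$ statement either to demand a genuinely different, norm-specific argument, or to be used in the sequel only for the spectral norm, and I would single out the passage from $p = 2$ to arbitrary $p$ as the step most in need of scrutiny.
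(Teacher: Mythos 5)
Your spectral-norm argument is correct, and your doubt about the remaining values of $p$ is not a gap in your reasoning but a genuine defect of the statement: the proposition is false for $p \ne 2$. Take $\Mat A = (1,2)^\T \in \BC^{2\times 1}$. Then $\Mat A^\dagger = \nicefrac15\,(1,2)$ has induced $\infty$-norm (maximal absolute row sum) equal to $\nicefrac35$, whereas the left inverse $(0,\nicefrac12)$ has induced $\infty$-norm $\nicefrac12$; likewise the left inverse $(\nicefrac13,\nicefrac13)$ beats $\Mat A^\dagger$ in the induced $1$-norm ($\nicefrac13$ versus $\nicefrac25$). The paper's own proof runs through the chain
\begin{equation*}
  \pNormn{\Mat A^+}_p
  \;\ge\; \sup_{\substack{\pNormn{\Vek x}_p = 1\\ \Vek x \in \ran \Mat A}} \pNormn{\Mat A^+ \Vek x}_p
  \;=\; \sup_{\pNormn{\Vek x}_p = 1} \pNormn{\Mat A^\dagger \Vek x}_p,
\end{equation*}
and the final equality silently uses that $\pNormn{\Mat Q \Vek x}_p \le \pNormn{\Vek x}_p$ for the orthogonal projection $\Mat Q$ onto $\ran \Mat A$ --- exactly the contraction property you flag, and it holds only for $p = 2$. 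In the example above the two suprema genuinely differ: the restricted one equals $\nicefrac12$, the unrestricted one $\nicefrac35$.

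For $p = 2$ your route (the positive-semidefinite ordering of $\Mat A^+(\Mat A^+)^*$ against $\Mat A^\dagger(\Mat A^\dagger)^*$) and the paper's route (restricting the supremum to $\ran \Mat A$) are both valid; yours additionally yields dominance of all singular values and hence the inequality for every unitarily invariant norm, which is strictly more than the paper claims. Be aware, however, that the paper later invokes the proposition with $p = \infty$ --- in the bound $\pNormn{\Vek \eta - \tilde{\Vek \eta}}_\infty \le \pNormn{\Mat V_{L}^\dagger(\Vek \beta)}_\infty \, \epsilon$ and its perturbed variant in the sensitivity analysis --- so the defect propagates into those estimates. They can be repaired at the cost of dimension-dependent factors, for instance via $\pNormn{\Mat A^\dagger}_\infty \le \sqrt{L}\,\pNormn{\Mat A^\dagger}_2 \le \sqrt{L}\,\pNormn{\Mat A^+}_2$, but not by the proposition as stated.
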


\begin{proof}
  Since every left inverse $\Mat A^+$ fulfils
  $\Mat A^+ \Mat A = \Mat I$, all left inverses coincide on the range
  of $\Mat A$.  The Moore--Penrose inverse is now the unique zero
  continuation from the range to the whole space $\BC^L$, which
  geometrically means that the Moore--Penrose inverse is the
  projection onto $\ran \Mat A$ composed with the unique inverse on
  the range.  For the induced matrix norm, this means
  \begin{equation*}
    \pNormn{\Mat A^+}_p
    = \sup_{\pNormn{\Vek x}_p = 1} \pNormn{\Mat A^+ \Vek x}_p
    \ge \sup_{\substack{\pNormn{\Vek x}_p = 1\\ \Vek x \in \ran \Mat
        A}}
    \pNormn{\Mat A^+ \Vek x}_p
    = \sup_{\pNormn{\Vek x}_p = 1} \pNormn{\Mat A^\dagger \Vek x}_p
    = \pNormn{\Mat A^\dagger}_p
  \end{equation*}
  because $(\ran \Mat A)^\perp = \ker \Mat A^\dagger$.  This
  argumentation holds for all induced matrix norms and not only for
  the $p$-norm.  \qed
\end{proof}

Using this property of the Moore--Penrose inverse, we may immediately
estimate the condition number
$\kappa(\Mat V_{L}(\Vek \beta)) \coloneqq \pNormn{\Mat
  V_{L}^\dagger(\Vek \beta)}_2 \pNormn{\Mat V_{L}(\Vek \beta)}_2$ of
the Vandermonde matrix $\Mat V_{L}(\Vek \beta)$ if the bases
$\Vek \beta$ are known.

\begin{proposition}
  The condition number of the Vandermonde matrix
  $\Mat V_{L}(\Vek \beta)$ is bounded by
  \begin{equation*}
    \kappa(\Mat V_{L}(\Vek \beta))
    \le \sqrt K \, L \,
    \frac{\pi_{\Vek \beta} \, \rho_{\Vek \beta}^{L-1}}
    {\sigma_{\Vek \beta}^{K-1}},
  \end{equation*}
\end{proposition}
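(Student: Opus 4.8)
The plan is to bound the condition number $\kappa(\Mat V_{L}(\Vek \beta)) = \pNormn{\Mat V_{L}^\dagger(\Vek \beta)}_2 \, \pNormn{\Mat V_{L}(\Vek \beta)}_2$ by estimating each of the two factors separately and then multiplying the resulting bounds. The second factor is already controlled: \thref{lem:norm-vander} immediately gives $\pNormn{\Mat V_{L}(\Vek \beta)}_2 \le \sqrt{KL} \, \rho_{\Vek \beta}^{L-1}$, so the entire difficulty is concentrated in bounding $\pNormn{\Mat V_{L}^\dagger(\Vek \beta)}_2$, the spectral norm of the Moore--Penrose inverse.

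For that factor I would invoke \thref{prop:moore-penrose}: since the Moore--Penrose inverse has the smallest norm among all left inverses, it suffices to exhibit \emph{one} convenient left inverse and bound its norm. The natural candidate is the zero-padded inverse of the top quadratic block, namely $\Mat V_{L}^+(\Vek \beta) \coloneqq \bigl(\Mat V^{-1}(\Vek \beta) \mid \Mat 0_{K,L-K}\bigr)$, exactly as used in the proof of \thref{the:weight-pert-zeros}. Because appending zero columns does not change the operator norm, we have $\pNormn{\Mat V_{L}^+(\Vek \beta)}_2 = \pNormn{\Mat V^{-1}(\Vek \beta)}_2$, and the spectral norm can in turn be controlled through the row-sum norm via $\pNormn{\Mat V^{-1}(\Vek \beta)}_2 \le \sqrt{K} \, \pNormn{\Mat V^{-1}(\Vek \beta)}_\infty$ (for a $K\times K$ matrix). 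Now \thref{prop:norm-inv-vander} supplies $\pNormn{\Mat V^{-1}(\Vek \beta)}_\infty \le \pi_{\Vek \beta} / \sigma_{\Vek \beta}^{K-1}$, so combining these steps yields $\pNormn{\Mat V_{L}^\dagger(\Vek \beta)}_2 \le \sqrt{K} \, \pi_{\Vek \beta} / \sigma_{\Vek \beta}^{K-1}$.

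Multiplying the two bounds then gives
\begin{equation*}
  \kappa(\Mat V_{L}(\Vek \beta))
  \le \Bigl( \sqrt{K} \, \frac{\pi_{\Vek \beta}}{\sigma_{\Vek \beta}^{K-1}} \Bigr)
  \Bigl( \sqrt{KL} \, \rho_{\Vek \beta}^{L-1} \Bigr)
  = \sqrt{K} \, L \, \frac{\pi_{\Vek \beta} \, \rho_{\Vek \beta}^{L-1}}{\sigma_{\Vek \beta}^{K-1}},
\end{equation*}
which is precisely the claimed estimate. I expect the main obstacle to be a bookkeeping point rather than a deep one: one must verify that $\Mat V_{L}^+(\Vek \beta)$ is genuinely a left inverse of $\Mat V_{L}(\Vek \beta)$ (this needs $L \ge K$ and the distinctness of the $\beta_k$, so that the top block $\Mat V(\Vek \beta)$ is an invertible square Vandermonde matrix) and that the passage from the spectral norm to the $\infty$-norm of the $K\times K$ inverse introduces exactly the factor $\sqrt{K}$, keeping the constants consistent with \thref{lem:norm-vander} so that the two square roots of $K$ combine correctly into the single $\sqrt{K}$ appearing in the final bound.
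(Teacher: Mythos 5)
Your argument is exactly the paper's: the same three ingredients (\thref{lem:norm-vander}, \thref{prop:moore-penrose}, and \thref{prop:norm-inv-vander}) applied to the same zero-padded left inverse $\bigl(\Mat V^{-1}(\Vek\beta)\mid\Mat 0\bigr)$, and all the individual estimates you use are valid. The only flaw is an arithmetic slip in your final display: $\sqrt K\cdot\sqrt{KL}=K\sqrt L$, not $\sqrt K\,L$, so the concluding ``$=$'' is false as an identity; since $K\le L$ one has $K\sqrt L\le\sqrt K\,L$, so your chain actually establishes the slightly sharper bound $K\sqrt L\,\pi_{\Vek\beta}\,\rho_{\Vek\beta}^{L-1}/\sigma_{\Vek\beta}^{K-1}$ and the stated proposition follows a fortiori --- just replace the last equality by an inequality with that justification.
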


\begin{proof}
  The bound follows from \thref{lem:norm-vander} and from
  \thref{prop:moore-penrose} and \ref{prop:norm-inv-vander} with the
  left inverse $\Mat V_{L-K}^+ (\Vek \beta) \coloneqq \bigl(
  \begin{smallmatrix}
    \Mat V^{-1} (\Vek \beta) \\
    \Mat 0_{L-2K,K}
  \end{smallmatrix}
  \bigr)$.  \qed
\end{proof}

\begin{proposition}
  Let $\Vek \eta$ and $\Vek \beta$ be the parameters of the
  exponential sum \eqref{eq:exp-sum}.  The least-squares solution
  $\tilde{\Vek \eta}$ of the perturbed equation system
  $\Mat V_{L}(\Vek \beta) \, \tilde{\Vek \eta} = \tilde{\Vek h}$ with
  $\pNormn{\Vek h - \tilde{\Vek h}}_\infty \le \epsilon$ satisfies
  \begin{equation*}
    \pNormn{\Vek \eta - \tilde{\Vek \eta}}_\infty
    \le \frac{\pi_{\Vek \beta}}{\sigma_{\Vek \beta}^{K-1}} \, \epsilon.
  \end{equation*}
\end{proposition}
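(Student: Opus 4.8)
The plan is to reduce everything to the norm bound on the Moore--Penrose inverse already packaged in \thref{prop:moore-penrose} and \thref{prop:norm-inv-vander}. First I would observe that the exact samples satisfy the linear system exactly: since $h_\ell = \sum_{k=0}^{K-1}\eta_k\beta_k^\ell$, we have $\Mat V_L(\Vek\beta)\,\Vek\eta = \Vek h$, so $\Vek h$ lies in the range of the full-rank matrix $\Mat V_L(\Vek\beta)$. Consequently $\Mat V_L^\dagger(\Vek\beta)\,\Vek h = \Mat V_L^\dagger(\Vek\beta)\,\Mat V_L(\Vek\beta)\,\Vek\eta = \Vek\eta$, while the least-squares solution of the perturbed system is $\tilde{\Vek\eta} = \Mat V_L^\dagger(\Vek\beta)\,\tilde{\Vek h}$. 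Subtracting gives the clean identity $\Vek\eta - \tilde{\Vek\eta} = \Mat V_L^\dagger(\Vek\beta)\,(\Vek h - \tilde{\Vek h})$, so that all of the difficulty is concentrated in estimating $\pNormn{\Mat V_L^\dagger(\Vek\beta)}_\infty$.

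Next I would apply the submultiplicativity of the induced $\infty$-norm together with the hypothesis $\pNormn{\Vek h - \tilde{\Vek h}}_\infty \le \epsilon$ to obtain
\[
  \pNormn{\Vek\eta - \tilde{\Vek\eta}}_\infty
  \le \pNormn{\Mat V_L^\dagger(\Vek\beta)}_\infty \, \epsilon .
\]
To bound the norm of the Moore--Penrose inverse, I would invoke \thref{prop:moore-penrose} in the case $p=\infty$: among all left inverses of the full-rank matrix $\Mat V_L(\Vek\beta)$, the Moore--Penrose inverse has the smallest induced $\infty$-norm. Hence it suffices to exhibit one convenient left inverse and bound its norm. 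The natural choice keeps the square Vandermonde block formed by the first $K$ rows of $\Mat V_L(\Vek\beta)$, namely $\Mat V(\Vek\beta)$, and sets $\Mat V_L^+(\Vek\beta) \coloneqq \bigl(\Mat V^{-1}(\Vek\beta) \mid \Mat 0_{K,L-K}\bigr)$; one checks directly that $\Mat V_L^+(\Vek\beta)\,\Mat V_L(\Vek\beta) = \Mat I$, since the top block reproduces $\Mat V^{-1}(\Vek\beta)\,\Mat V(\Vek\beta) = \Mat I$ and the appended zeros annihilate the remaining rows.

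Appending zero columns leaves the row sums unchanged, so $\pNormn{\Mat V_L^+(\Vek\beta)}_\infty = \pNormn{\Mat V^{-1}(\Vek\beta)}_\infty$, and \thref{prop:norm-inv-vander} bounds the latter by $\pi_{\Vek\beta}/\sigma_{\Vek\beta}^{K-1}$. Chaining these estimates through \thref{prop:moore-penrose} yields $\pNormn{\Mat V_L^\dagger(\Vek\beta)}_\infty \le \pi_{\Vek\beta}/\sigma_{\Vek\beta}^{K-1}$, and substituting into the displayed inequality gives the claim. I do not expect a genuine obstacle here: the only points demanding care are verifying that the padded matrix really is a left inverse and that \thref{prop:moore-penrose} is used with $p=\infty$ rather than the Euclidean norm; beyond that the argument is a direct assembly of the two earlier norm bounds.
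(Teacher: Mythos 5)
Your proposal is correct and follows essentially the same route as the paper: the identity $\Vek \eta - \tilde{\Vek \eta} = \Mat V_{L}^\dagger(\Vek \beta)(\Vek h - \tilde{\Vek h})$, then \thref{prop:moore-penrose} with the zero-padded left inverse built from $\Mat V^{-1}(\Vek \beta)$, and finally \thref{prop:norm-inv-vander}. The only difference is that you spell out the intermediate steps (and write the padded left inverse with the dimensionally cleaner block layout $(\Mat V^{-1}(\Vek\beta) \mid \Mat 0_{K,L-K})$), which the paper compresses into one sentence.
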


\begin{proof}
  The inequality follows immediately from
  $\pNormn{\Vek \eta - \tilde{\Vek \eta}}_\infty \le \pNormn{\Mat
    V_{L}^\dagger(\Vek \beta)}_\infty \pNormn{\Vek h - \tilde{\Vek
      h}}_\infty$ and from applying \thref{prop:moore-penrose} and
  \ref{prop:norm-inv-vander} with the left inverse
  $\Mat V_{L-K}^+ (\Vek \beta) \coloneqq \bigl(
  \begin{smallmatrix}
    \Mat V^{-1} (\Vek \beta) \\
    \Mat 0_{L-2K,K}
  \end{smallmatrix}
  \bigr)$.  \qed
\end{proof}

Certainly, the computed bases $\tilde{\Vek \beta}$ are themselves only
approximations of $\Vek \beta$ in practice.  Therefore, besides the
right-hand side $\tilde{\Vek h}$, the Vandermonde matrix $\Mat
V_{L}(\tilde{\Vek \beta})$ is perturbed too.  For studying the effect
to the recovered coefficients, we need the following lemmata.

\begin{lemma}
  \label{lem:pert-prod-rad}
  For $\Vek \beta \in \BC^{K}$, and for $\tilde{\Vek \beta} \in
  \BC^K$ with $\pNormn{\Vek \beta - \tilde{\Vek \beta}}_\infty \le
  \delta$, it holds
  \begin{equation*}
    \pi_{\tilde{\Vek \beta}} \le \pi_{\absn{\Vek \beta} + \delta \Vek 1}.
  \end{equation*}
\end{lemma}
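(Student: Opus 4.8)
The plan is to reduce the claimed product inequality to a factorwise estimate obtained from the triangle inequality. First I would unfold both product radii according to their definition: since $\absn{\beta_k} + \delta \ge 0$, the right-hand side reads $\pi_{\absn{\Vek \beta} + \delta \Vek 1} = \prod_{k=0}^{K-1} \bigl( 1 + \absn{\beta_k} + \delta \bigr)$, while the left-hand side is simply $\pi_{\tilde{\Vek \beta}} = \prod_{k=0}^{K-1} \bigl( 1 + \absn{\tilde \beta_k} \bigr)$. The task is therefore to compare these two products factor by factor.

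The key step is the componentwise bound on the perturbed bases. From the hypothesis $\pNormn{\Vek \beta - \tilde{\Vek \beta}}_\infty \le \delta$ we have $\absn{\beta_k - \tilde \beta_k} \le \delta$ for every $k$, and the triangle inequality gives $\absn{\tilde \beta_k} \le \absn{\beta_k} + \absn{\tilde \beta_k - \beta_k} \le \absn{\beta_k} + \delta$. Adding $1$ to both sides yields the factorwise inequality $1 + \absn{\tilde \beta_k} \le 1 + \absn{\beta_k} + \delta$ for each $k = 0, \dots, K-1$.

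It remains to pass from the factorwise estimate to the product. Since every factor $1 + \absn{\tilde \beta_k}$ and $1 + \absn{\beta_k} + \delta$ is strictly positive, multiplying the $K$ individual inequalities preserves the direction of the estimate, and we obtain $\pi_{\tilde{\Vek \beta}} = \prod_{k=0}^{K-1} (1 + \absn{\tilde \beta_k}) \le \prod_{k=0}^{K-1} (1 + \absn{\beta_k} + \delta) = \pi_{\absn{\Vek \beta} + \delta \Vek 1}$, which is the assertion. There is essentially no obstacle here; the only points worth a moment's care are the positivity of all factors, which is what legitimizes multiplying the individual bounds, and the harmless observation that $\absn{\beta_k} + \delta$ is already nonnegative, so that the inner absolute value in the definition of the product radius can be dropped when unfolding the right-hand side.
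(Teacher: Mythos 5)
Your proposal is correct and follows exactly the paper's own one-line argument: the triangle inequality gives $\absn{\tilde\beta_k} \le \absn{\beta_k} + \delta$ componentwise, and multiplying the positive factors yields the product bound. The extra remarks on positivity are harmless but not needed beyond what the paper already records.
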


\begin{proof}
  The lemma is established by
  \begin{equation*}
    \pi_{\tilde{\Vek \beta}}
    = \prod_{k=0}^{K-1} (1 + \absn{\tilde \beta_k})
    \le \prod_{k=0}^{K-1} (1 + \absn{\beta_k} + \delta)
    = \pi_{\absn{\Vek \beta} + \delta \Vek 1}.
    \tag*{\qed}
  \end{equation*}
\end{proof}

\begin{lemma}
  \label{lem:pert-min-sep}
  For $\Vek \beta \in \BC^{K}$, and for
  $\tilde{\Vek \beta} \in \BC^K$ with
  $\pNormn{\Vek \beta - \tilde{\Vek \beta}}_\infty \le \delta$, it
  holds
  \begin{equation*}
    \sigma_{\tilde{\Vek \beta}} \ge \sigma_{\Vek \beta} - 2 \delta.
  \end{equation*}
\end{lemma}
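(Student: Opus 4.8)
The plan is to reduce the statement to a pairwise estimate via the reverse triangle inequality, in direct analogy to the proof of \thref{lem:pert-prod-rad}. The key observation is that both minimal separations are minima of the same family of pairwise distances, only evaluated at $\Vek\beta$ versus $\tilde{\Vek\beta}$; so it suffices to bound each perturbed distance $\absn{\tilde\beta_\ell - \tilde\beta_k}$ from below \emph{uniformly} over all admissible pairs, and only then pass to the minimum.

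Concretely, I would fix an arbitrary pair of indices $0 \le \ell < k \le K-1$ and write
\begin{equation*}
  \absn{\tilde\beta_\ell - \tilde\beta_k}
  \ge \absn{\beta_\ell - \beta_k}
  - \absn{\beta_\ell - \tilde\beta_\ell}
  - \absn{\beta_k - \tilde\beta_k}.
\end{equation*}
The two perturbation terms are each controlled by the hypothesis $\pNormn{\Vek\beta - \tilde{\Vek\beta}}_\infty \le \delta$, since this maximum norm dominates every single coordinate difference. Hence $\absn{\tilde\beta_\ell - \tilde\beta_k} \ge \absn{\beta_\ell - \beta_k} - 2\delta$, and bounding the surviving distance below by the minimal separation $\sigma_{\Vek\beta}$ gives $\absn{\tilde\beta_\ell - \tilde\beta_k} \ge \sigma_{\Vek\beta} - 2\delta$ for \emph{every} pair.

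Because this lower bound is independent of the chosen pair, taking the minimum over all $\ell < k$ on the left-hand side yields $\sigma_{\tilde{\Vek\beta}} \ge \sigma_{\Vek\beta} - 2\delta$, which is the claim. I do not anticipate a genuine obstacle here: the only point requiring a little care is the order of quantifiers, namely that one must establish the bound for all pairs before minimizing, rather than comparing the two minima directly (the pair realizing $\sigma_{\tilde{\Vek\beta}}$ need not be the one realizing $\sigma_{\Vek\beta}$). The estimate is intentionally stated without assuming $\delta$ small, so it remains valid — though vacuous — when the right-hand side becomes non-positive.
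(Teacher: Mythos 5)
Your proposal is correct and matches the paper's own proof, which applies exactly the same reverse triangle inequality to each pair $\absn{\tilde\beta_\ell - \tilde\beta_k} \ge \absn{\beta_\ell - \beta_k} - \absn{\beta_\ell - \tilde\beta_\ell} - \absn{\beta_k - \tilde\beta_k} \ge \absn{\beta_\ell - \beta_k} - 2\delta$ before minimizing. Your added remark about establishing the bound uniformly over all pairs before passing to the minimum is a sensible clarification, but the argument is the same.
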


\begin{proof}
  Using the triangle inequality, we may estimate the minimal
  separation by
  \begin{equation*}
    \absn{\tilde \beta_\ell - \tilde \beta_k}
    \ge \absn{\beta_\ell - \beta_k} - \absn{\beta_\ell - \tilde
      \beta_\ell} - \absn{\beta_k - \tilde \beta_k}
    \ge \absn{\beta_\ell - \beta_k} - 2 \delta.
    \tag*{\qed}
  \end{equation*}
\end{proof}

\begin{lemma}
  \label{lem:per-vander}
  For $\Vek \beta \in \BC^{K}$, and for
  $\tilde{\Vek \beta} \in \BC^K$ with
  $\pNormn{\Vek \beta - \tilde{\Vek \beta}}_\infty \le \delta$, it
  holds
  \begin{equation*}
    \pNormn{\Mat V_{L}(\tilde{\Vek \beta}) - \Mat V_{L}(\Vek
      \beta)}_\infty
    \le \sqrt 2 \, KL \, \rho_{\absn{\Vek \beta} + \delta \Vek 1}^{L-1} \, \delta.
  \end{equation*}
\end{lemma}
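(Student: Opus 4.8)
The plan is to reduce everything to a row-wise estimate, since the induced $\infty$-norm of a matrix is just its maximal absolute row sum. The entry in row $\ell$ and column $k$ of the difference $\Mat V_L(\tilde{\Vek \beta}) - \Mat V_L(\Vek \beta)$ is $\tilde\beta_k^\ell - \beta_k^\ell$, so $\pNormn{\Mat V_L(\tilde{\Vek \beta}) - \Mat V_L(\Vek \beta)}_\infty = \max_{0 \le \ell < L} \sum_{k=0}^{K-1} \absn{\tilde\beta_k^\ell - \beta_k^\ell}$. It therefore suffices to bound the single-entry difference $\absn{\tilde\beta_k^\ell - \beta_k^\ell}$ uniformly and then sum over $k$ and maximize over $\ell$.

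For the per-entry bound I would control the monomial $z \mapsto z^\ell$ along the straight segment joining $\beta_k$ and $\tilde\beta_k$. Writing $z^\ell = u + \I v$ and applying the (real, two-dimensional) mean value theorem to the coordinate functions $u$ and $v$ separately, the Cauchy--Riemann relations make both coordinate gradients have Euclidean length $\pNormn{\nabla u} = \pNormn{\nabla v} = \absn{\ell z^{\ell-1}}$, so each coordinate difference is at most $\sup \absn{\ell z^{\ell-1}} \cdot \absn{\tilde\beta_k - \beta_k}$ over the segment. Recombining the two coordinates through the Euclidean norm produces the factor $\sqrt 2$ and yields $\absn{\tilde\beta_k^\ell - \beta_k^\ell} \le \sqrt 2 \, \sup \absn{\ell z^{\ell-1}} \, \delta$. (The elementary factorization $\tilde\beta_k^\ell - \beta_k^\ell = (\tilde\beta_k - \beta_k) \sum_{m=0}^{\ell-1} \tilde\beta_k^m \beta_k^{\ell-1-m}$ gives the same estimate even without the $\sqrt 2$, so the stated inequality holds with room to spare.)

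Next I would bound the modulus on the segment. Every point $z$ between $\beta_k$ and $\tilde\beta_k$ satisfies $\absn z \le \max\{\absn{\beta_k}, \absn{\tilde\beta_k}\} \le \absn{\beta_k} + \delta$ by the hypothesis $\pNormn{\Vek \beta - \tilde{\Vek \beta}}_\infty \le \delta$, and hence $\absn z \le \rho_{\absn{\Vek \beta} + \delta \Vek 1}$. Consequently $\sup \absn{\ell z^{\ell-1}} \le \ell \, \rho_{\absn{\Vek \beta} + \delta \Vek 1}^{\ell-1}$, which combined with the previous step gives $\absn{\tilde\beta_k^\ell - \beta_k^\ell} \le \sqrt 2 \, \ell \, \rho_{\absn{\Vek \beta} + \delta \Vek 1}^{\ell-1} \, \delta$.

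Finally I would assemble the pieces: summing this entry bound over the $K$ columns and taking the maximum over $\ell$, I use $\ell \le L$ together with $\rho_{\absn{\Vek \beta} + \delta \Vek 1} \ge 1$ (so that $\rho_{\absn{\Vek \beta} + \delta \Vek 1}^{\ell-1} \le \rho_{\absn{\Vek \beta} + \delta \Vek 1}^{L-1}$) to reach the claimed bound $\sqrt 2 \, K L \, \rho_{\absn{\Vek \beta} + \delta \Vek 1}^{L-1} \, \delta$. The only genuinely delicate point is the per-entry estimate for complex bases: the real mean value theorem must be applied coordinatewise and then recombined, which is exactly what forces the (harmless) factor $\sqrt 2$; the remaining steps are routine bookkeeping using $\rho_{\absn{\Vek \beta} + \delta \Vek 1} \ge 1$ and $\ell \le L$.
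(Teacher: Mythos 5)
Your proof is correct and follows essentially the same route as the paper: the paper also reduces to the maximal row sum and bounds $\absn{\tilde\beta_k^\ell - \beta_k^\ell}$ via a complex mean value theorem (citing Evard--Jafari) applied to $z\mapsto z^\ell$ on the segment, with the $\sqrt 2$ arising exactly as in your coordinatewise recombination of real and imaginary parts. Your parenthetical remark that the telescoping factorization $\tilde\beta_k^\ell - \beta_k^\ell = (\tilde\beta_k - \beta_k)\sum_{m=0}^{\ell-1}\tilde\beta_k^m\beta_k^{\ell-1-m}$ yields the bound without the $\sqrt 2$ is a valid (and slightly sharper) shortcut, but it does not change the substance of the argument.
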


\begin{proof}
  We use the following complex mean value theorem
  \cite[Thm~2.2]{EJ92}: Let $f$ be a holomorphic function defined on
  an open convex set $D \subset \BC$, and let $a$ and $b$ be two
  distinct points in $D$.  Then there exist $z_1$, $z_2 \in (a,b)$
  such that
  \begin{equation*}
    \Re \bigl( f'(z_1) \bigr) = \Re \biggl( \frac{f(b) - f(a)}{b-a} \biggr)
    \qquad\text{and}\qquad
    \Im \bigl( f'(z_2) \bigr) = \Im \biggl( \frac{f(b) - f(a)}{b-a} \biggr),
  \end{equation*}
  where $(a,b)$ denotes the open line segment
  \begin{equation*}
    (a,b) \coloneqq \{a + t (b-a) : t \in (0,1) \}.
  \end{equation*}
  On the basis of this complex mean value theorem, we obtain
  \begin{equation*}
    \pNormn{\Mat V_{L}(\Vek \beta) - \Mat V_{L}(\tilde{\Vek \beta})}_\infty
    = \max_{0 \le \ell < L} \sum_{k=0}^{K-1} \absn{ \beta_k^\ell - \tilde{\beta}_k^\ell } 
    = \max_{0 \le \ell < L} \sum_{k=0}^{K-1} \ell \absn{\beta_k -
      \tilde \beta_k} \absn{\Re(\xi_{\ell,k}^{\ell-1}) + \I \Im (\zeta_{\ell,k}^{\ell-1})}
  \end{equation*}
  with intermediate points $\xi_{\ell,k}$, $\zeta_{\ell,k} \in
  (\beta_k, \tilde{\beta}_k)$.  Since $\absn{\xi_{\ell,k}} \le
  \absn{\beta_k} + \delta$ as well as  $\absn{\zeta_{\ell,k}} \le
  \absn{\beta_k} + \delta$, we finally have
  \begin{equation*}
     \pNormn{\Mat V_{L}(\Vek \beta) - \Mat V_{L}(\tilde{\Vek
    \beta})}_\infty
    \le \max_{\substack{0 \le \ell < L \\ 0 \le k < K}}\sqrt 2 \, \ell K \,
    (\absn{\beta_k} + \delta)^{\ell-1} \, \delta
    \le \sqrt 2 \, KL \, \rho_{\absn{\Vek \beta} + \delta \Vek 1}^{L-1} \,
    \delta.
    \tag*{\qed}
  \end{equation*}
\end{proof}

\begin{theorem}
  Let $\Vek \eta$ and $\Vek \beta$ be the parameters of the
  exponential sum \eqref{eq:exp-sum}.  The least-squares solution
  $\tilde{\Vek \eta}$ of the perturbed equation system
  $\Mat V_{L}(\tilde{\Vek \beta}) \, \tilde{\Vek \eta} = \tilde{\Vek
    h}$ with $\pNormn{\Vek h - \tilde{\Vek h}}_\infty \le \epsilon$,
  $\pNormn{\Vek \beta - \tilde{\Vek \beta}}_\infty \le \delta$, and
  $\delta < \nicefrac{\sigma_{\Vek \beta}}2$ satisfies
  \begin{equation*}
    \pNormn{\Vek \eta - \tilde{\Vek \eta}}_\infty
    \le
    \frac{\pi_{\absn{\Vek \beta} + \delta \Vek 1}}{(\sigma_{\Vek
        \beta} - 2 \delta)^{K-1}} \,
    \biggl(\sqrt 2 \,  KL 
    \, \frac{\pi_{\Vek \beta}  \, \rho_{\absn{\Vek \beta} + \delta
        \Vek 1}^{L-1}}
    {\sigma_{\Vek \beta}^{K-1}} \pNormn{\Vek h}_\infty \,
    \delta + \epsilon \biggr).
  \end{equation*}
\end{theorem}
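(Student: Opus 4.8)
The plan is to reduce the estimate to the norm bounds already assembled above and to handle both error sources — the perturbed right-hand side $\tilde{\Vek h}$ and the perturbed Vandermonde matrix $\Mat V_L(\tilde{\Vek \beta})$ — through a single left-inverse identity. Since $\Vek \eta$ and $\Vek \beta$ are the exact parameters of \eqref{eq:exp-sum}, the unperturbed samples satisfy $\Mat V_L(\Vek \beta) \, \Vek \eta = \Vek h$ exactly. Because $\delta < \nicefrac{\sigma_{\Vek \beta}}2$, \thref{lem:pert-min-sep} gives $\sigma_{\tilde{\Vek \beta}} \ge \sigma_{\Vek \beta} - 2 \delta > 0$, so the components of $\tilde{\Vek \beta}$ stay pairwise distinct and $\Mat V_L(\tilde{\Vek \beta})$ retains full column rank $K$. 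Hence the least-squares solution is $\tilde{\Vek \eta} = \Mat V_L^\dagger(\tilde{\Vek \beta}) \, \tilde{\Vek h}$ and $\Mat V_L^\dagger(\tilde{\Vek \beta}) \, \Mat V_L(\tilde{\Vek \beta}) = \Mat I$.

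From this left-inverse relation I would insert $\Vek h = \Mat V_L(\Vek \beta) \, \Vek \eta$ to split the residual into a matrix-perturbation term and a data-perturbation term,
\begin{equation*}
  \Vek \eta - \tilde{\Vek \eta}
  = \Mat V_L^\dagger(\tilde{\Vek \beta}) \bigl( \Mat V_L(\tilde{\Vek \beta}) \, \Vek \eta - \tilde{\Vek h} \bigr)
  = \Mat V_L^\dagger(\tilde{\Vek \beta}) \bigl[ ( \Mat V_L(\tilde{\Vek \beta}) - \Mat V_L(\Vek \beta) ) \, \Vek \eta + ( \Vek h - \tilde{\Vek h} ) \bigr].
\end{equation*}
Passing to the maximum norm and using submultiplicativity of the induced norm then yields
\begin{equation*}
  \pNormn{\Vek \eta - \tilde{\Vek \eta}}_\infty
  \le \pNormn{\Mat V_L^\dagger(\tilde{\Vek \beta})}_\infty
  \bigl( \pNormn{\Mat V_L(\tilde{\Vek \beta}) - \Mat V_L(\Vek \beta)}_\infty \pNormn{\Vek \eta}_\infty + \pNormn{\Vek h - \tilde{\Vek h}}_\infty \bigr).
\end{equation*}

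It then remains to estimate the three factors, each of which is already available. For $\pNormn{\Mat V_L^\dagger(\tilde{\Vek \beta})}_\infty$ I would apply \thref{prop:moore-penrose} with the left inverse obtained from $\Mat V^{-1}(\tilde{\Vek \beta})$, then \thref{prop:norm-inv-vander} at $\tilde{\Vek \beta}$, and finally \thref{lem:pert-prod-rad} and \thref{lem:pert-min-sep} to replace $\pi_{\tilde{\Vek \beta}}$ and $\sigma_{\tilde{\Vek \beta}}$ by $\pi_{\absn{\Vek \beta} + \delta \Vek 1}$ and $\sigma_{\Vek \beta} - 2 \delta$; this reproduces the prefactor $\pi_{\absn{\Vek \beta} + \delta \Vek 1} / (\sigma_{\Vek \beta} - 2 \delta)^{K-1}$. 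The matrix difference is bounded by \thref{lem:per-vander} as $\sqrt 2 \, KL \, \rho_{\absn{\Vek \beta} + \delta \Vek 1}^{L-1} \, \delta$. For $\pNormn{\Vek \eta}_\infty$ I would use $\Vek \eta = \Mat V_L^\dagger(\Vek \beta) \, \Vek h$ together with \thref{prop:moore-penrose} and \thref{prop:norm-inv-vander} at the exact $\Vek \beta$ to obtain $\pNormn{\Vek \eta}_\infty \le (\pi_{\Vek \beta} / \sigma_{\Vek \beta}^{K-1}) \pNormn{\Vek h}_\infty$, while $\pNormn{\Vek h - \tilde{\Vek h}}_\infty \le \epsilon$ is the hypothesis. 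Substituting these four estimates gives the claimed bound.

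Once the splitting is fixed, the remainder is pure bookkeeping, so I expect no serious obstacle. The only delicate point is the reduction to the identity $\Vek \eta - \tilde{\Vek \eta} = \Mat V_L^\dagger(\tilde{\Vek \beta}) ( \Mat V_L(\tilde{\Vek \beta}) \, \Vek \eta - \tilde{\Vek h} )$, which silently relies on $\Mat V_L(\tilde{\Vek \beta})$ keeping full column rank — hence the hypothesis $\delta < \nicefrac{\sigma_{\Vek \beta}}2$ cannot be dropped. The one thing to watch is applying \thref{prop:norm-inv-vander} at $\tilde{\Vek \beta}$ for the inverse factor but at $\Vek \beta$ for $\pNormn{\Vek \eta}_\infty$, so that the perturbed quantities $\pi_{\absn{\Vek \beta} + \delta \Vek 1}$ and $(\sigma_{\Vek \beta} - 2 \delta)^{K-1}$ enter only through the prefactor.
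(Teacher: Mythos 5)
Your proposal is correct and follows essentially the same route as the paper: the same splitting $\Vek \eta - \tilde{\Vek \eta} = \Mat V_L^\dagger(\tilde{\Vek \beta})\bigl[(\Mat V_L(\tilde{\Vek \beta}) - \Mat V_L(\Vek \beta))\Vek \eta + (\Vek h - \tilde{\Vek h})\bigr]$ via the left-inverse identity, the same bound on $\pNormn{\Mat V_L^\dagger(\tilde{\Vek \beta})}_\infty$ through \thref{prop:moore-penrose}, \thref{prop:norm-inv-vander}, \thref{lem:pert-prod-rad}, and \thref{lem:pert-min-sep}, and the same estimates for the matrix difference and for $\pNormn{\Vek \eta}_\infty$. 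Nothing to add.
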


\begin{proof}
  Due to $\delta < \nicefrac{\sigma_{\Vek \beta}}2$, the perturbed
  Vandermonde matrix $\Mat V_{L}(\tilde{\Vek \beta})$ has full rank.
  Further, the reconstruction error may be estimated by
  \begin{align*}
    \pNormn{ \Vek\eta - \tilde{\Vek\eta} }_\infty
    &= \pNormn{\Vek \eta - \Mat V_{L}^\dagger(\tilde{\Vek \beta}) \,
      \tilde{\Vek h}}_\infty
    \\[\fskip]
    &= \pNormn{\Mat V_{L}^\dagger(\tilde{\Vek \beta})\, \Mat
      V_{L}(\tilde{\Vek \beta}) \Vek \eta
      - \Mat V_{L}^\dagger(\tilde{\Vek \beta}) \, \Mat V_{L}({\Vek
      \beta}) \, \Vek \eta
      + \Mat V_{L}^\dagger(\tilde{\Vek \beta}) (\Vek h - \tilde{\Vek h}) }_\infty
    \\[\fskip]
    &\leq 
      \pNormn{\Mat V_{L}^\dagger(\tilde{\Vek \beta})}_\infty
      \bigl(\pNormn{\Mat V_{L}(\tilde{\Vek \beta}) - \Mat V_{L}({\Vek
      \beta})}_\infty \pNormn{\Vek \eta}_\infty
      + \pNormn{\Vek h - \tilde{\Vek h}}_\infty \bigr)
  \end{align*}
  The first factor may be estimated by applying
  \thref{prop:moore-penrose} with perturbed left inverse
  $\Mat V_{L-K}^+ (\tilde{\Vek \beta}) \coloneqq \bigl(
  \begin{smallmatrix}
    \Mat V^{-1} (\tilde{\Vek \beta}) \\
    \Mat 0_{L-2K,K}
  \end{smallmatrix}
  \bigr)$ followed by \thref{prop:norm-inv-vander},
  \thref{lem:pert-prod-rad}, and \thref{lem:pert-min-sep} yielding
  \begin{equation*}
    \pNormn{\Mat V_{L}^\dagger(\tilde{\Vek \beta})}_\infty
    \le \frac{\pi_{\tilde{\Vek \beta}}}{\sigma_{\tilde{\Vek
          \beta}}^{K-1}}
    \le \frac{\pi_{\absn{\Vek \beta} + \delta \Vek 1}}{(\sigma_{\Vek
        \beta} - 2 \delta)^{K-1}}.
  \end{equation*}
  Using \thref{lem:per-vander} and that $\pNormn{\Vek \eta}_\infty \le
  \pNormn{\Mat V_{L}^\dagger(\Vek \beta)}_\infty \pNormn{\Vek h}_\infty$
  together with \thref{prop:moore-penrose} and
  \thref{prop:norm-inv-vander}, we finally arrive at
  \begin{equation*}
    \pNormn{\Vek\eta - \tilde{\Vek\eta}}_\infty
    \le
    \frac{\pi_{\absn{\Vek \beta} + \delta \Vek 1}}{(\sigma_{\Vek
        \beta} - 2 \delta)^{K-1}} \,
    \biggl(\sqrt2 \,  KL \, \rho_{\absn{\Vek \beta} + \delta \Vek 1}^{L-1} \,
    \delta
    \, \frac{\pi_{\Vek \beta}}{\sigma_{\Vek \beta}^{K-1}} \pNormn{\Vek
      h}_\infty + \epsilon \biggr).
    \tag*{\qed}
  \end{equation*}
\end{proof}

\subsection{Sensitivity of phase \& system identification}
\label{sec:sens-phase-system}

On the basis of the sensitivity analysis of Prony's method, we analyse
the error propagation in dynamical phase retrieval.  For this, we
assume that the unknown bases $\lambda_j \bar\lambda_k$ and
coefficients $c_j \bar c_k$ of the exponential sum describing the
measurements \eqref{eq:phaseless-meas} have been approximately computed.
In the following, we denote the true bases and coefficients by
\begin{align}\label{eq:main-structure}
  {\beta}_{\tau(j,k)} ={\lambda}_j \bar{{\lambda}}_k
  \qquad\text{and}\qquad
  \eta_{\tau(j,k)} = c_j \bar c_k,
\end{align}
where the bijective map
\begin{equation*}
  \tau\colon \{0,\dots,d-1\} \times \{0,\dots,d-1\}
  \to \{0, \dots, d^2-1\}
\end{equation*}
describes the relation between the indices.  Assuming that the
recovered bases $\tilde{\Vek \beta}$ and coefficients
$\tilde{\Vek \eta}$ satisfy
$\|\Vek{\tilde{\beta}}- \Vek \beta \|_\infty \leq \delta$ and
$\|\Vek{\tilde{\eta}}- \Vek \eta \|_\infty \leq \varepsilon$, where
$\delta$ should be small enough such that the mapping $\tau$ can be
recovered up to the winding direction by the above constructive
proofs, \ie\ the error is small enough such that the order of the
absolute values $\absn{\beta_k}$ remains unchanged, we want to
estimate the errors in the recovered spectrum $\tilde{\Vek\lambda}$
and signal $\tilde{\Vek x}$.  Note that $\tilde\beta_{\tau(j,k)}$ and
$\tilde\eta_{\tau(j,k)}$ are simply conjugated for the opposite
winding direction.

In line with the above procedures, where firstly the magnitudes of the
unknown variables are determined, and secondly the phase is propagated
between the elements, we decouple the sensitivity analysis of absolute value and
phase.  Further, we first discuss the sensitivity of the unknown
operator spectrum, followed by the analysis of the unknown signal, and
finally the error propagation for multiple sampling vectors.

\paragraph{Sensitivity of the spectrum}

The recovered bases $\tilde{\Vek\beta}$ already contain estimates of
the squared modulus of the spectrum $\Vek \lambda$.  After recovering
the relation $\tau$ (up to winding direction), the magnitude of the
spectrum is easily obtained by taking the square root, \ie\
\begin{equation}
  \label{eq:mag-spectrum}
  \absn{\tilde\lambda_j} \coloneqq \sqrt{\absn{\tilde \beta_{\tau(j,j)}}}.
  \addmathskip
\end{equation}
The sensitivity of the magnitude computation may be easily estimated
via the mean value theorem.

\begin{lemma}\label{lem:error_in_each_amplitude}
  Assume  $\absn{ \tilde{\beta}_{\tau(j,j)}  -  {\beta}_{\tau(j,j)}}
  \leq \delta$, and estimate the magnitude $\absn{\lambda_j}$ by
  \eqref{eq:mag-spectrum}.  If $\delta < \absn{\lambda_j}^2$, then we have  
  \begin{equation*}
    \absb{\absn{\tilde{\lambda}_j} - \absn{\lambda_j}}
    \leq \frac{\delta}{2\sqrt{\absn{\lambda_j}^2 - \delta}}
  \end{equation*}
  and, for $\delta < \nicefrac{\absn{\lambda_j}^2}2 $, in particular 
  \begin{equation*}
    \absb{ \absn{\tilde{\lambda}_j} - \absn{\lambda_j}} \leq
    \frac{\sqrt{2} \, \delta}{2\sqrt{\absn{\lambda_j}^2}}.
  \end{equation*}
\end{lemma}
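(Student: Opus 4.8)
The plan is to exploit that the diagonal base $\beta_{\tau(j,j)}$ is not an arbitrary complex number but, by the structural identity \eqref{eq:main-structure}, equals $\beta_{\tau(j,j)} = \lambda_j \bar\lambda_j = \absn{\lambda_j}^2$, hence a nonnegative real. Consequently $\sqrt{\beta_{\tau(j,j)}} = \sqrt{\absn{\beta_{\tau(j,j)}}} = \absn{\lambda_j}$, and the estimator \eqref{eq:mag-spectrum} reduces the problem to controlling how much the scalar function $t \mapsto \sqrt t$ moves when its argument is perturbed from $\absn{\lambda_j}^2$ to $\absn{\tilde\beta_{\tau(j,j)}}$. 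So the whole lemma is a one-dimensional perturbation statement about the square root, completely analogous in spirit to the real mean value theorem arguments used in Section~\ref{sec:sens-prony}.

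First I would transfer the hypothesis on $\beta_{\tau(j,j)}$ to its modulus. By the reverse triangle inequality,
\[
  \absb{\absn{\tilde\beta_{\tau(j,j)}} - \absn{\lambda_j}^2}
  = \absb{\absn{\tilde\beta_{\tau(j,j)}} - \absn{\beta_{\tau(j,j)}}}
  \le \absn{\tilde\beta_{\tau(j,j)} - \beta_{\tau(j,j)}}
  \le \delta .
\]
The assumption $\delta < \absn{\lambda_j}^2$ then guarantees $\absn{\tilde\beta_{\tau(j,j)}} \ge \absn{\lambda_j}^2 - \delta > 0$, so both arguments of the square root lie strictly inside the positive half-axis, where $t \mapsto \sqrt t$ is smooth with derivative $1/(2\sqrt t)$. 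This positivity is the only genuine thing one must check before differentiating.

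Next I would invoke the (real) mean value theorem for $\sqrt{\cdot}$ on the segment between $\absn{\lambda_j}^2$ and $\absn{\tilde\beta_{\tau(j,j)}}$: there is an intermediate point $\xi$ in that segment with $\absn{\tilde\lambda_j} - \absn{\lambda_j} = \tfrac{1}{2\sqrt\xi}\,(\absn{\tilde\beta_{\tau(j,j)}} - \absn{\lambda_j}^2)$. Since $\xi \ge \min\{\absn{\lambda_j}^2, \absn{\tilde\beta_{\tau(j,j)}}\} \ge \absn{\lambda_j}^2 - \delta$, combining with the modulus estimate above yields the first claimed bound $\absb{\absn{\tilde\lambda_j} - \absn{\lambda_j}} \le \delta/(2\sqrt{\absn{\lambda_j}^2 - \delta})$. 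Finally, for the sharpened regime $\delta < \absn{\lambda_j}^2/2$ I would simply estimate the denominator by monotonicity: $\absn{\lambda_j}^2 - \delta > \absn{\lambda_j}^2/2$, so $\sqrt{\absn{\lambda_j}^2 - \delta} > \absn{\lambda_j}/\sqrt2 = \sqrt{\absn{\lambda_j}^2}/\sqrt2$, which immediately gives the stated $\sqrt2\,\delta/(2\sqrt{\absn{\lambda_j}^2})$. I do not expect any real obstacle here; the only points demanding a sentence of care are the reverse triangle inequality (to move the perturbation onto the moduli) and verifying that the mean value theorem interval stays positive, both of which are secured by the hypothesis $\delta < \absn{\lambda_j}^2$.
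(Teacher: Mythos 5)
Your proposal is correct and follows essentially the same route as the paper, which likewise derives the first bound by combining the reverse triangle inequality (to pass from $\absn{\tilde\beta_{\tau(j,j)}-\beta_{\tau(j,j)}}\le\delta$ to the moduli) with the mean value theorem for $t\mapsto\sqrt t$, and obtains the second bound as an immediate consequence of $\delta<\nicefrac{\absn{\lambda_j}^2}{2}$. You merely spell out the intermediate-point lower bound $\xi\ge\absn{\lambda_j}^2-\delta$ and the positivity check that the paper leaves implicit.
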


\begin{proof}
  The statement immediately follows from applying the mean value
  theorem and the reversed triangle inequality by
  \begin{equation*}
    \absb{\absn{\tilde{\lambda}_j} - \absn{\lambda_j}}
    = \absb{\absn{\tilde{\beta}_{\tau(j,j)}}^{\nicefrac12} - \absn{\beta_{\tau(j,j)} }^{\nicefrac12} } 
    \leq \frac{ \delta}{2\sqrt{\absn{\beta_{\tau(j,j)}}  - \delta}}.
  \end{equation*}
  The second one is a trivial consequence.  \qed
\end{proof}

Recall that for computing the phase of $\tilde{\lambda}_j$, we first
find the element with the largest magnitude, say $\tilde\lambda_k$,
then set the phase of $\tilde\lambda_k$ to be zero due to the global
phase ambiguity, and finally propagate the phase to
$\tilde{\lambda}_j$ using the relative phase encoded in
$\beta_{\tau(j,k)}$.  More precisely, exploiting
$\tilde{\beta}_{\tau(j,k)} \approx {\lambda}_j \bar{{\lambda}}_k$, we
retrieve the phase of ${\lambda}_j$ by
\begin{align}
  \label{eq:phase-spectrum}
  \tilde{\lambda}_j \coloneqq
  \frac{\tilde{\beta}_{\tau(j,k)}}{|\tilde{\beta}_{\tau(j,k)}|}
  \, \absn{\tilde\lambda_j},
\end{align}
where $\absn{\tilde\lambda_j}$ has been computed by
\eqref{eq:mag-spectrum} in the first step.  Note that this phase
propagation is a very simple method, which however allow to analyse
the propagation error. For doing this, we
assume that the map $\tau$ given in \eqref{eq:main-structure} has been
identified with respect to the true winding direction.  Otherwise, we
consider the conjugated recovered spectrum
\raisebox{0pt}[0pt][0pt]{$\bar{\tilde{\Vek \lambda}}$} without loss of
generality.  For simplicity, we first consider the phase propagation
only between two elements. The idea of the proof was motivated by \cite{iwen2016fast}.

\begin{lemma}\label{lem:error_in_each_phase}
  Assume
  $\absn{ \tilde{\beta}_{\tau(j,k)} - {\beta}_{\tau(j,k)}} \leq
  \delta$, suppose that $\lambda_k$ is real and positive, and estimate
  the phase $\arg(\lambda_j)$ by \eqref{eq:phase-spectrum}.  If
  $\delta < \absn{\lambda_j} \absn{\lambda_k}$, then we have
  \begin{equation*}
    |\arg(\tilde{\lambda}_j ) - \arg(\lambda_j)\Mod2\uppi|
    \leq \frac{2\delta}{\absn{\lambda_k}\absn{\lambda_j}}.
  \end{equation*}
\end{lemma}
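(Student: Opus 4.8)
The plan is to reduce the claimed bound to a purely geometric statement about the angle subtended at the origin by a small disk, and then to control that angle with an elementary convexity estimate for $\arcsin$. First I would record the two facts that turn the left-hand side into an honest angular deviation of the single number $\tilde\beta_{\tau(j,k)}$ from $\beta_{\tau(j,k)}$. Since $\lambda_k$ is real and positive, $\beta_{\tau(j,k)} = \lambda_j\bar\lambda_k = |\lambda_k|\,\lambda_j$, so that $\arg(\beta_{\tau(j,k)}) = \arg(\lambda_j)$ and $|\beta_{\tau(j,k)}| = |\lambda_j|\,|\lambda_k|$. On the other side, the definition \eqref{eq:phase-spectrum} multiplies $\tilde\beta_{\tau(j,k)}$ by the factor $|\tilde\lambda_j|/|\tilde\beta_{\tau(j,k)}|$, which is real and positive and hence leaves the argument unchanged; here $|\tilde\beta_{\tau(j,k)}| \ge |\beta_{\tau(j,k)}| - \delta > 0$ by the reverse triangle inequality and the hypothesis $\delta < |\lambda_j|\,|\lambda_k|$, so the factor is well defined. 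Thus $\arg(\tilde\lambda_j) = \arg(\tilde\beta_{\tau(j,k)})$, and the quantity to be bounded is exactly the angle between the two complex numbers $\tilde\beta_{\tau(j,k)}$ and $\beta_{\tau(j,k)}$.

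Next I would prove the geometric bound. Writing $r \coloneqq |\beta_{\tau(j,k)}| = |\lambda_j|\,|\lambda_k|$, the perturbed value $\tilde\beta_{\tau(j,k)}$ lies in the closed disk of radius $\delta$ about $\beta_{\tau(j,k)}$. Because $\delta < r$, the origin lies strictly outside this disk, and the set of directions from the origin to points of the disk forms a cone whose half-opening angle equals the tangent angle $\arcsin(\delta/r)$, the extreme rays being perpendicular to the radius at their point of tangency. Consequently the angular deviation of $\tilde\beta_{\tau(j,k)}$ from $\beta_{\tau(j,k)}$ is at most $\arcsin(\delta/r)$. Since $\delta/r < 1$ forces $\arcsin(\delta/r) < \nicefrac\uppi2$, this deviation is the genuine angular distance, so it coincides with $|\arg(\tilde\lambda_j) - \arg(\lambda_j)\Mod 2\uppi|$ and no multiple of $2\uppi$ has wrapped around.

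Finally I would discharge the $\arcsin$ by convexity: on $[0,1]$ the function $\arcsin$ lies below the chord joining $(0,0)$ and $(1,\nicefrac\uppi2)$, whence $\arcsin(t) \le \tfrac\uppi2\, t \le 2t$. Taking $t = \delta/r$ yields
\begin{equation*}
  |\arg(\tilde\lambda_j) - \arg(\lambda_j)\Mod 2\uppi|
  \le \arcsin\Bigl(\tfrac{\delta}{r}\Bigr)
  \le \frac{2\delta}{r}
  = \frac{2\delta}{|\lambda_j|\,|\lambda_k|},
\end{equation*}
which is the assertion.

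The only step requiring genuine care is the geometric identification of the maximal subtended angle with $\arcsin(\delta/r)$, together with the verification that it stays below $\nicefrac\uppi2$, so that the estimate really controls the reduced phase difference modulo $2\uppi$ rather than a representative that has wrapped around. Everything else — the reduction to $\arg(\tilde\beta_{\tau(j,k)})$ and the convexity bound for $\arcsin$ — is routine bookkeeping.
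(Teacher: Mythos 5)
Your proposal is correct and follows essentially the same route as the paper: both arguments reduce the claim to the angle between $\beta_{\tau(j,k)}$ and $\tilde\beta_{\tau(j,k)}$, bound that angle by $\arcsin(\delta/\absn{\beta_{\tau(j,k)}})$ via an elementary right-triangle (tangent-cone) picture, check that it stays below $\nicefrac{\uppi}{2}$ so no wrap-around occurs, and finish with the linear bound on $\arcsin$ (the paper phrases it as $x \le 2\sin(x)$, you as $\arcsin(t)\le\tfrac{\uppi}{2}t\le 2t$ -- the same inequality). No gaps.
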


\begin{proof}
  Since $\lambda_k$ is supposed to be real and positive, the phase of
  $\lambda_j$ is directly encoded in the basis $\beta_{\tau(j,k)}$ by
  \begin{equation*}
    \arg(\beta_{\tau(j,k)})
    = \arg(\lambda_j)- \arg(\lambda_k) \Mod 2 \uppi
    = \arg({\lambda}_j).
  \end{equation*}
  During the proof, we denote the phases of
  $\beta_{\tau(j,k)}$ and $\tilde\beta_{\tau(j,k)}$ or $\lambda_j$ and
  $\tilde \lambda_j$ by $\alpha_j$ and $\tilde\alpha_j$ respectively.
  Because of
  $\absn{\tilde{\beta}_{\tau(j,k)}-\beta_{\tau(j,k)}} \leq \delta <
  \absn{ \beta_{\tau(j,k)}}$, the phase difference
  $|\tilde \alpha_j - \alpha_j \Mod2 \uppi|$ is always smaller than
  $\nicefrac{\uppi}2$.  Thus we have
  \begin{equation*}
    \absn{\tilde\alpha_j - \alpha_j \Mod 2\uppi}
    \le 2 \sin(\absn{\tilde\alpha_j - \alpha_j \Mod 2\uppi}).
  \end{equation*}
  To estimate the sine of the phase difference, we exploit the
  geometrical relation between $\beta_{\tau(j,k)}$ and $\tilde
  \beta_{\tau(j,k)}$ schematically presented in
  \autoref{fig:proof-err-phase}.  Using the best-known sine relation
  of the right-angled triangle, we have
  \begin{equation*}
    \absn{\tilde\alpha_j - \alpha_j \Mod 2\uppi}
    \le \frac{2 \gamma}{\absn{\beta_{\tau(j,k)}}}
    \le \frac{2 \delta}{|\beta_{\tau(j,k)}|}.
    \tag*{\qed}
  \end{equation*}
\end{proof}

\begin{figure}
  \centering
  \includegraphics{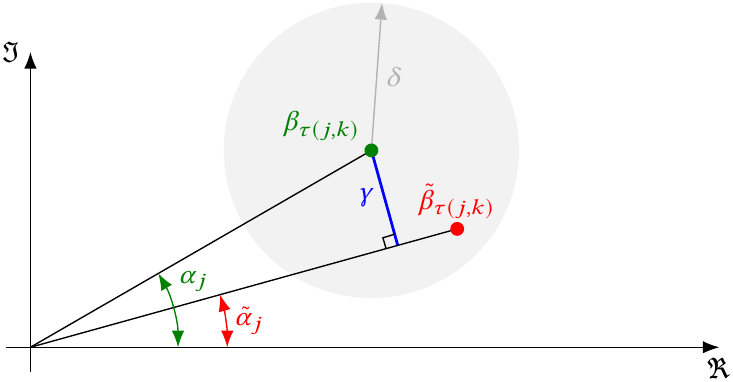}
  \caption{Geometrical relation between $\beta_{\tau(j,k)}$ and
    $\beta_{\tau(j,k)}$.  In the proof of
    Lemma~\ref{lem:error_in_each_phase}, we exploit the right-angled
    triangle between the rays with angle $\alpha_j$ and
    $\tilde\alpha_j$.  Note that the point $\tilde\beta_{\tau(j,k)}$
    may lay on the adjacent.  The opposite $\gamma$ is of length
    $\delta$ at the most.}
  \label{fig:proof-err-phase}
\end{figure}

Coupling the recovery of absolute values and the phase, we may
estimate the total recovery error for the spectrum $\Vek\lambda$,
which mainly depends on $\pNormn{\Vek \lambda}_{-\infty}$.

\begin{proposition}\label{prop:total-error}
  Assume
  $\pNormn{ \tilde{\Vek\beta} - {\Vek \beta}}_\infty \leq \delta$, and
  estimate $\Vek \lambda$ by \eqref{eq:mag-spectrum} and
  \eqref{eq:phase-spectrum}, where the true winding direction is used
  without loss of generality, and where the phase is propagated from
  the element largest in magnitude.  If
  $\delta < \pNormn{\Vek\lambda}^2_{-\infty}$, then we have
  \begin{equation*}
    \|\tilde{\Vek\lambda} -  \Vek\lambda \|_{\infty}
    \leq \biggl( \frac{2\sqrt{2}}{ \|\Vek\lambda\|_{-\infty} } +
    \frac{1}{2\sqrt{\|\Vek\lambda\|_{-\infty}^2-\delta}}\biggr) \, \delta
  \end{equation*}
  and, for $\delta \leq \nicefrac{\pNormn{\Vek\lambda}^2_{-\infty}}2$,
  in particular
  \begin{equation*}
    \|\tilde{\Vek\lambda} -  \Vek\lambda \|_{\infty}
    \leq  \frac{5 \sqrt{2} \, \delta}{2\|\Vek\lambda\|_{-\infty}}.
  \end{equation*}
\end{proposition}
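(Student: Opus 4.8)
The plan is to treat each component $\tilde\lambda_j$ separately and to decouple the magnitude error from the phase error, exactly along the lines suggested by \thref{lem:error_in_each_amplitude} and \thref{lem:error_in_each_phase}. Writing $\lambda_j = \absn{\lambda_j}\,\e^{\I\alpha_j}$ and $\tilde\lambda_j = \absn{\tilde\lambda_j}\,\e^{\I\tilde\alpha_j}$, I would insert the intermediate point $\absn{\tilde\lambda_j}\,\e^{\I\alpha_j}$ and apply the triangle inequality to get
\begin{equation*}
  \absn{\tilde\lambda_j - \lambda_j}
  \le \absn{\tilde\lambda_j}\,\absn{\e^{\I\tilde\alpha_j} - \e^{\I\alpha_j}}
  + \absb{\absn{\tilde\lambda_j} - \absn{\lambda_j}}.
\end{equation*}
Here the second summand is precisely the magnitude error handled by \thref{lem:error_in_each_amplitude}, while the first isolates the phase error. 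Before estimating, I would check the hypotheses of both lemmata: for the reference index $k$ of largest magnitude one has $\absn{\lambda_k} = \pNormn{\Vek\lambda}_\infty \ge \pNormn{\Vek\lambda}_{-\infty}$ and $\absn{\lambda_j} \ge \pNormn{\Vek\lambda}_{-\infty}$, so the assumption $\delta < \pNormn{\Vek\lambda}^2_{-\infty}$ already guarantees $\delta < \absn{\lambda_j}^2$ and $\delta < \absn{\lambda_j}\absn{\lambda_k}$.

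For the magnitude summand, \thref{lem:error_in_each_amplitude} together with $\absn{\lambda_j} \ge \pNormn{\Vek\lambda}_{-\infty}$ and the monotonicity of $x \mapsto (x-\delta)^{-\nicefrac12}$ yields
\begin{equation*}
  \absb{\absn{\tilde\lambda_j} - \absn{\lambda_j}}
  \le \frac{\delta}{2\sqrt{\absn{\lambda_j}^2 - \delta}}
  \le \frac{\delta}{2\sqrt{\pNormn{\Vek\lambda}^2_{-\infty} - \delta}}.
\end{equation*}
For the phase summand I would use $\absn{\e^{\I\tilde\alpha_j} - \e^{\I\alpha_j}} = 2\absn{\sin(\tfrac12(\tilde\alpha_j - \alpha_j \Mod 2\uppi))} \le \absn{\tilde\alpha_j - \alpha_j \Mod 2\uppi}$ and invoke \thref{lem:error_in_each_phase}, whose hypothesis that the reference $\lambda_k$ be real and positive is met by the global-phase convention. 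The crucial cancellation comes from the crude bound $\absn{\tilde\lambda_j} \le \sqrt2\,\absn{\lambda_j}$, which follows from $\absn{\tilde\lambda_j}^2 = \absn{\tilde\beta_{\tau(j,j)}} \le \absn{\lambda_j}^2 + \delta \le 2\absn{\lambda_j}^2$. Multiplying the phase estimate $2\delta/(\absn{\lambda_k}\absn{\lambda_j})$ by $\sqrt2\,\absn{\lambda_j}$ cancels the factor $\absn{\lambda_j}$ and leaves
\begin{equation*}
  \absn{\tilde\lambda_j}\,\absn{\e^{\I\tilde\alpha_j} - \e^{\I\alpha_j}}
  \le \frac{2\sqrt2\,\delta}{\pNormn{\Vek\lambda}_\infty}
  \le \frac{2\sqrt2\,\delta}{\pNormn{\Vek\lambda}_{-\infty}}.
\end{equation*}

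Adding the two contributions and taking the maximum over $j$ gives the first asserted inequality. The second, cruder bound then follows by specializing to $\delta \le \pNormn{\Vek\lambda}^2_{-\infty}/2$ and replacing the magnitude term by the simpler estimate $\sqrt2\,\delta/(2\pNormn{\Vek\lambda}_{-\infty})$ from \thref{lem:error_in_each_amplitude}, so that the two parts sum to $(2\sqrt2 + \tfrac{\sqrt2}2)\,\delta/\pNormn{\Vek\lambda}_{-\infty} = \tfrac{5\sqrt2}2\,\delta/\pNormn{\Vek\lambda}_{-\infty}$. The step I would be most careful about is the pairing of the intermediate point $\absn{\tilde\lambda_j}\,\e^{\I\alpha_j}$ with the crude magnitude bound $\absn{\tilde\lambda_j}\le\sqrt2\,\absn{\lambda_j}$: it is exactly this pairing that cancels $\absn{\lambda_j}$ from the denominator of the phase error and makes $\pNormn{\Vek\lambda}_\infty$ (hence the a priori smaller $\pNormn{\Vek\lambda}_{-\infty}$) the governing quantity, instead of the product $\pNormn{\Vek\lambda}_{-\infty}\pNormn{\Vek\lambda}_\infty$ that a naive split would produce.
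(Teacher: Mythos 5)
Your proposal is correct and follows essentially the same route as the paper's proof: the same triangle-inequality decomposition through the intermediate point $\absn{\tilde\lambda_j}\,\e^{\I\alpha_j}$, the same invocations of Lemmas~\ref{lem:error_in_each_amplitude} and~\ref{lem:error_in_each_phase}, and the same crude bound $\absn{\tilde\lambda_j}\le\sqrt2\,\absn{\cdot}$ to cancel the extra factor in the phase term (you cancel against $\absn{\lambda_j}$ where the paper cancels against $\pNormn{\Vek\lambda}_\infty$, which is immaterial). Your careful verification of the lemma hypotheses and your use of $\delta\le\nicefrac{\pNormn{\Vek\lambda}_{-\infty}^2}{2}$ in the second bound are both consistent with the intended argument.
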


\begin{proof}
  Let $\tilde{\alpha}_j,\alpha_j$ be the phases of
  $\tilde{\lambda}_j,\lambda_j$ respectively.  We decouple the phase
  and magnitude error by 
  \begin{equation*}
    \absn{\tilde{\lambda}_j - \lambda_j}
    = \absb{ \absn{\tilde{\lambda}_j} \, \e^{\I\tilde{\alpha}_j} 
      \pm  \absn{\tilde{\lambda}_j } \, \e^{\I\alpha_j}
      - \absn{\lambda_j} \e^{\I\alpha_j} }
      \leq \absn{\tilde{\lambda}_j} \,
      \absn{\e^{\I\tilde{\alpha}_j} - \e^{\I\alpha_j}}
      + \absb{\absn{\tilde{\lambda}_j} - \absn{\lambda_j}}.
  \end{equation*}
  The magnitude error may be simply estimated using
  \thref{lem:error_in_each_amplitude} via 
  \begin{equation*}
    \absb{\absn{\tilde{\lambda}_j} - \absn{\lambda_j}}
    \leq  \frac{\delta}{2\sqrt{\|\Vek\lambda\|_{-\infty}^2-\delta}}.
  \end{equation*}
  For the phase error, assume that
  $\lambda_k$ is the eigenvalue with largest magnitude, set
  $\arg(\lambda_k) = 0$, and propagate the phase from $\lambda_k$ to
  the remaining $\lambda_j$ by \eqref{eq:phase-spectrum}. The
  difference between the unimodular exponentials is now
  \begin{align*}
    | \e^{\I\tilde{\alpha}_j} - \e^{\I\alpha_j} |
    &=| \e^{\nicefrac{\I(\tilde{\alpha}_j -\alpha_j)}2}
      -  \e^{\nicefrac{-\I(\tilde{\alpha}_j -\alpha_j)}2}|
      =  2|\sin(\nicefrac{(\tilde{\alpha}_j - \alpha_j)}{2})|
    \\
    & \leq | \tilde{\alpha}_j - \alpha_j \Mod 2 \uppi |	 
      \le\frac{2\delta}{\|\Vek\lambda\|_{\infty} \|\Vek\lambda\|_{-\infty} },
  \end{align*}
  where the last inequality holds by \thref{lem:error_in_each_phase}.
  Using
  $|\tilde{\lambda_j}|\leq \sqrt{\|\Vek\lambda\|^2_{\infty}+ \delta}
  \le \sqrt 2 \pNormn{\Vek \lambda}_\infty$, we finally arrive at
  \begin{equation*}
    \| \Vek{\tilde{\lambda}} - \Vek\lambda\|_\infty 
    \leq \frac{2\delta\,\sqrt{\|\Vek \lambda\|^2_{\infty}
        +\delta}}{\|\Vek\lambda\|_{\infty} \|\Vek\lambda\|_{-\infty} }
    +  \frac{\delta}{2\sqrt{\|\Vek\lambda\|_{-\infty}^2-\delta}}
    \leq \biggl( \frac{2\sqrt{2}}{ \|\Vek\lambda\|_{-\infty} } +
    \frac{1}{2\sqrt{\|\Vek\lambda\|_{-\infty}^2-\delta}} \biggr) \,\delta.
  \end{equation*}
  If $\delta< \nicefrac{\|\Vek\lambda\|_{-\infty}}2$, we obtain
  \begin{equation*}
    \| \Vek{\tilde{\lambda}} - \Vek\lambda\|_\infty \leq  
    \frac{2\sqrt{2} \, \delta}{ \|\Vek\lambda\|_{-\infty} }
    +  \frac{\sqrt{2}\,\delta}{2\|\Vek\lambda\|_{-\infty}}
    \leq \frac{5\sqrt{2}\delta}{2\|\Vek\lambda\|_{-\infty}}.
    \tag*{\qed}
  \end{equation*}
\end{proof}

\paragraph{Sensitivity of the signal}

As discussed in the previous sections, the components of $\tilde{\Vek\eta}$
are in line with the structure of \eqref{eq:main-structure} meaning
\begin{equation*}
  \tilde{\eta}_{\tau(j,k)} \approx {c}_j \bar{{c}}_k
  \qquad\text{with}\qquad  
  {c}_j = \bar{y}_j\psi_j
  = (\overline{\Mat S^*{\Vek x}})_j \, (\Mat S^{-1}\Vek\phi)_j.
\end{equation*}
With respect to the above proofs, we recover the transformed signal
$\Vek y = \Mat S^* \Vek x$ similar to the spectrum $\Vek \lambda$.
Thus, we first recover the magnitudes via the real and positive values
$\tilde{\eta}_{\tau(j,j)}$, then assume that $\tilde c_k$ largest in
magnitude is real and positive, and spread the phase from $\tilde c_k$
to every other $\tilde c_j$ using the relative phase encoded in
$\tilde \eta_{\tau(j,k)}$.  Because of $y_j = c_j \psi_j^{-1}$
resulting in $|y_j| = |\psi_j^{-1}| \, \sqrt{\eta_{\tau(j,j)}}$ and
$\arg(y_j) = \arg(\eta_{\tau(j,k)}) - \arg(\psi_j)$, we compute the
tranformed components via
\begin{align} \label{eq:coeff-recover}
  \absn{\tilde y_j} \coloneqq
  \frac{\sqrt{\absn{\tilde\eta_{\tau(j,j)}}}}{\absn{\psi_j}}
  \qquad\text{and}\qquad
  y_j \coloneqq
  \frac{\eta_{\tau(j,k)}}{|\eta_{\tau(j,k)}|}
  \, \frac{\bar{\psi_j}}
  {|{\psi}_j|} \, \absn{y_j}.
\end{align}
Adapting the considerations in the previous paragraph for the
spectrum, we obtain the following sensitivities.

\begin{lemma}\label{lem:co-rec-modul}
  Assume  $\absn{ \tilde{\eta}_{\tau(j,j)}  -  {\eta}_{\tau(j,j)}}
  \leq \epsilon$, and estimate the magnitude $\absn{y_j}$ by
  \eqref{eq:coeff-recover}.  If $\epsilon < \absn{y_j}^2 \absn{\psi_j}^2$, then we have  
  \begin{equation*}
    \absb{\absn{\tilde y_j} - \absn{y_j}}
    \leq \frac{\epsilon}{2 \absn{\psi_j} \sqrt{\absn{y_j}^2 \absn{\psi_j}^2 - \epsilon}}.
  \end{equation*}
\end{lemma}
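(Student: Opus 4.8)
The plan is to recognize \thref{lem:co-rec-modul} as a rescaled copy of \thref{lem:error_in_each_amplitude} and to follow that proof almost verbatim. First I would note that the recovery rule \eqref{eq:coeff-recover} reproduces the true magnitude exactly on noise-free data: since $c_j = \bar y_j \psi_j$ gives $\eta_{\tau(j,j)} = c_j \bar c_j = \absn{c_j}^2 = \absn{y_j}^2 \absn{\psi_j}^2$, inserting the exact $\eta_{\tau(j,j)}$ into \eqref{eq:coeff-recover} returns $\sqrt{\absn{y_j}^2 \absn{\psi_j}^2} / \absn{\psi_j} = \absn{y_j}$. Consequently the only source of error is the difference between $\tilde\eta_{\tau(j,j)}$ and $\eta_{\tau(j,j)}$ under the square root, and the factor $\absn{\psi_j}^{-1}$ can be pulled out of the estimate, giving
\begin{equation*}
  \absb{\absn{\tilde y_j} - \absn{y_j}}
  = \frac{1}{\absn{\psi_j}}
  \absb{\sqrt{\absn{\tilde\eta_{\tau(j,j)}}} - \sqrt{\absn{\eta_{\tau(j,j)}}}}.
\end{equation*}

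Next I would bound the difference of the two square roots exactly as in the proof of \thref{lem:error_in_each_amplitude}. The reverse triangle inequality turns the hypothesis $\absn{\tilde\eta_{\tau(j,j)} - \eta_{\tau(j,j)}} \le \epsilon$ into $\absb{\absn{\tilde\eta_{\tau(j,j)}} - \absn{\eta_{\tau(j,j)}}} \le \epsilon$, so in particular the perturbed radicand satisfies $\absn{\tilde\eta_{\tau(j,j)}} \ge \absn{\eta_{\tau(j,j)}} - \epsilon$. Applying the mean value theorem to the map $t \mapsto \sqrt t$ on the interval between $\absn{\tilde\eta_{\tau(j,j)}}$ and $\absn{\eta_{\tau(j,j)}}$ then yields
\begin{equation*}
  \absb{\sqrt{\absn{\tilde\eta_{\tau(j,j)}}} - \sqrt{\absn{\eta_{\tau(j,j)}}}}
  \le \frac{\epsilon}{2 \sqrt{\absn{\eta_{\tau(j,j)}} - \epsilon}}.
\end{equation*}

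Finally I would substitute $\absn{\eta_{\tau(j,j)}} = \absn{y_j}^2 \absn{\psi_j}^2$ and divide by $\absn{\psi_j}$ to obtain the claimed bound; the hypothesis $\epsilon < \absn{y_j}^2 \absn{\psi_j}^2$ is precisely the condition keeping the radicand $\absn{\eta_{\tau(j,j)}} - \epsilon$ positive, so the mean value estimate is valid. I do not expect any genuine obstacle: the statement is a direct rescaling of \thref{lem:error_in_each_amplitude} by the fixed factor $\absn{\psi_j}^{-1}$, and the only points requiring a little care are confirming the exactness of the recovery on clean data and carrying the factor $\absn{\psi_j}$ consistently through the square root so that it ends up inside the radicand as $\absn{\psi_j}^2$.
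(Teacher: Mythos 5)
Your proposal is correct and follows essentially the same route as the paper, which likewise reduces the claim to $\absb{\absn{\tilde y_j}-\absn{y_j}} = \absn{\psi_j}^{-1}\,\absb{\absn{\tilde c_j}-\absn{c_j}}$ and then reuses the mean-value-theorem argument of Lemma~\ref{lem:error_in_each_amplitude}. The only difference is that you spell out the substitution $\absn{\eta_{\tau(j,j)}}=\absn{y_j}^2\absn{\psi_j}^2$ and the exactness on clean data explicitly, which the paper leaves implicit.
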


\begin{proof} 
  Consider
  $| |\tilde{y}_j| - |y_j || =|\psi_j^{-1}| \, ||\tilde{c}_j |- |c_j
  ||$ and use the arguments in \thref{lem:error_in_each_amplitude}. \qed
\end{proof}

\begin{lemma}\label{lem:co-rec-phase}
  Assume
  $\absn{ \tilde{\eta}_{\tau(j,k)} - {\eta}_{\tau(j,k)}} \leq
  \epsilon$, suppose that $y_k$ is real and positive, and estimate
  the phase $\arg(y_j)$ by \eqref{eq:coeff-recover}.  If
  $\epsilon < \absn{y_j} \absn{y_k} \absn{\psi_j} \absn{\psi_k}$, then we have
  \begin{equation*}
    |\arg(\tilde{y}_j ) - \arg(y_j)\Mod2\uppi|
    \leq
    \frac{2\epsilon}{\absn{y_k}\absn{y_j}\absn{\psi_k}\absn{\psi_j}}. 
  \end{equation*}
\end{lemma}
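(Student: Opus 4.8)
The plan is to mirror the geometric argument of \thref{lem:error_in_each_phase}, now applied to the coefficient basis $\eta_{\tau(j,k)} = c_j \bar c_k$ in place of $\beta_{\tau(j,k)} = \lambda_j \bar\lambda_k$. The crucial observation is that, in the recovery rule \eqref{eq:coeff-recover}, the phase of $\tilde y_j$ arises from the phase of $\tilde\eta_{\tau(j,k)}$ by adding the phase of the deterministic factor $\bar\psi_j/\absn{\psi_j}$ and multiplying by the real, positive scalar $\absn{\tilde y_j}$. Since $\psi_j$ is known exactly and the modulus factor is real and positive, these contributions are identical for $\tilde y_j$ and for the true $y_j$ (up to at most a global conjugation, which merely flips the sign of every relative phase and hence leaves all absolute phase differences unchanged). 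Consequently,
\begin{equation*}
  \absn{\arg(\tilde y_j) - \arg(y_j) \Mod 2\uppi}
  = \absn{\arg(\tilde\eta_{\tau(j,k)}) - \arg(\eta_{\tau(j,k)}) \Mod 2\uppi},
\end{equation*}
so the whole error analysis reduces to bounding the phase perturbation of $\eta_{\tau(j,k)}$.

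Next I would record the modulus of the relevant coefficient. With $c_j = \bar y_j \psi_j$ we have $\absn{c_j} = \absn{y_j}\absn{\psi_j}$, and therefore
\begin{equation*}
  \absn{\eta_{\tau(j,k)}} = \absn{c_j}\,\absn{c_k}
  = \absn{y_j}\absn{y_k}\absn{\psi_j}\absn{\psi_k}.
\end{equation*}
The hypothesis $\epsilon < \absn{y_j}\absn{y_k}\absn{\psi_j}\absn{\psi_k}$ is thus precisely the statement $\epsilon < \absn{\eta_{\tau(j,k)}}$, which guarantees that the perturbed value $\tilde\eta_{\tau(j,k)}$ remains strictly inside the open half-plane $\{z : \Re(z\,\overline{\eta_{\tau(j,k)}}) > 0\}$, so that the enclosed phase difference is genuinely below $\nicefrac{\uppi}2$ --- exactly the regime in which the triangle estimate of \thref{lem:error_in_each_phase} is valid.

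Finally I would invoke the right-angled triangle (sine) bound verbatim: the perturbation $\tilde\eta_{\tau(j,k)} - \eta_{\tau(j,k)}$ has modulus at most $\epsilon$, and this length bounds the opposite side of the right triangle spanned by the rays $\arg(\eta_{\tau(j,k)})$ and $\arg(\tilde\eta_{\tau(j,k)})$, whence
\begin{equation*}
  \absn{\arg(\tilde\eta_{\tau(j,k)}) - \arg(\eta_{\tau(j,k)}) \Mod 2\uppi}
  \le \frac{2\epsilon}{\absn{\eta_{\tau(j,k)}}}
  = \frac{2\epsilon}{\absn{y_j}\absn{y_k}\absn{\psi_j}\absn{\psi_k}}.
\end{equation*}
Combined with the reduction from the first paragraph, this yields the claim. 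The only genuinely new bookkeeping compared with \thref{lem:error_in_each_phase} is isolating the error-free factor $\bar\psi_j/\absn{\psi_j}$ and computing $\absn{\eta_{\tau(j,k)}}$; I expect the main (and essentially only) subtlety to be confirming that this known factor, together with the assumption that $y_k$ is real and positive, really does make the phase of $y_j$ coincide with that of $\eta_{\tau(j,k)}$, so that no additional error term slips into the estimate.
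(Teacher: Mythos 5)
Your proposal is correct and follows essentially the same route as the paper: the paper likewise cancels the exactly known phase contribution of $\psi_j$ to reduce the error to $\absn{\arg(\tilde c_j)-\arg(c_j)}$ and then invokes the right-angled-triangle (sine) argument of \thref{lem:error_in_each_phase}, with $\absn{\eta_{\tau(j,k)}}=\absn{y_j}\absn{y_k}\absn{\psi_j}\absn{\psi_k}$ supplying the denominator. Your write-up is in fact slightly more explicit than the paper's two-line proof in checking that the hypothesis $\epsilon<\absn{\eta_{\tau(j,k)}}$ is exactly what the geometric lemma requires.
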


\begin{proof}
  Note that the phase difference may be written as 
  \begin{equation*}
    | \arg(\tilde{y}_j) - \arg({y}_j) |
    = | \arg(\tilde{c}_j) - \arg(\psi_j) - \arg({c}_j) + \arg(\psi_j)|
    = | \arg(\tilde{c}_j) - \arg({c}_j)|,
  \end{equation*}
  and use the arguments of \thref{lem:co-rec-phase}. \qed
\end{proof}

\begin{proposition}
  Assume
  $\pNormn{ \tilde{\Vek\eta} - {\Vek \eta}}_\infty \leq \epsilon$, and
  estimate $\Vek y$ by \eqref{eq:coeff-recover}, where the true
  winding direction is used without loss of generality, and where the
  phase is propagated form the element largest in magnitude.  If
  $\epsilon < \pNormn{\Vek y}^2_{-\infty} \pNormn{\Vek
    \psi}_{-\infty}^2$, then we have
  \begin{align*}
    \|\tilde{\Vek y} -  \Vek y \|_{\infty}
    &\leq \biggl( \frac{2\sqrt{2} \, \pNormn{\Vek y}_\infty \, \pNormn{\Vek
        \psi}_\infty}{ \|\Vek y\|_{-\infty}^2 \, \pNormn{\Vek \psi}_{-\infty}^2 } +
    \frac{1}{2 \, \pNormn{\Vek \psi}_{-\infty} \, \sqrt{\|\Vek
        y\|_{-\infty}^2 \, \|\Vek \psi\|_{-\infty}^2-\epsilon}}\biggr)
    \, \epsilon
    \\ \shortintertext{and thus}
    \|\tilde{\Vek x} -  \Vek x \|_{\infty}
    &\leq \biggl( \frac{2\sqrt{2} \, \pNormn{\Vek y}_\infty \, \pNormn{\Vek
        \psi}_\infty}{ \|\Vek y\|_{-\infty}^2 \, \pNormn{\Vek \psi}_{-\infty}^2 } +
    \frac{1}{2 \, \pNormn{\Vek \psi}_{-\infty} \, \sqrt{\|\Vek
        y\|_{-\infty}^2 \, \|\Vek \psi\|_{-\infty}^2-\epsilon}}\biggr)
    \, \pNormn{\Mat S^{-1}}_1 \, \epsilon.
  \end{align*}
\end{proposition}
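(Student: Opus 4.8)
The plan is to reproduce, line for line, the argument of \thref{prop:total-error}, since the recovery \eqref{eq:coeff-recover} of $\Vek y$ from the diagonal coefficients $\tilde\eta_{\tau(j,j)}$ and the off\-diagonal coefficients $\tilde\eta_{\tau(j,k)}$ is structurally identical to the recovery of the spectrum $\Vek\lambda$ from $\tilde\beta_{\tau(j,j)}$ and $\tilde\beta_{\tau(j,k)}$, with \thref{lem:co-rec-modul} and \thref{lem:co-rec-phase} playing the roles of \thref{lem:error_in_each_amplitude} and \thref{lem:error_in_each_phase}. Writing $\alpha_j$ and $\tilde\alpha_j$ for the phases of $y_j$ and $\tilde y_j$, I would first decouple magnitude and phase componentwise through
\begin{equation*}
  \absn{\tilde y_j - y_j}
  \le \absn{\tilde y_j} \, \absn{\e^{\I\tilde\alpha_j} - \e^{\I\alpha_j}}
  + \absb{\absn{\tilde y_j} - \absn{y_j}}.
\end{equation*}
The standing assumption $\epsilon < \pNormn{\Vek y}_{-\infty}^2 \pNormn{\Vek\psi}_{-\infty}^2$ is what guarantees that every radicand stays positive and that the modulus hypotheses of both lemmas are satisfied for each index, since $\absn{y_j}\ge\pNormn{\Vek y}_{-\infty}$ and $\absn{\psi_j}\ge\pNormn{\Vek\psi}_{-\infty}$.

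For the magnitude term I would apply \thref{lem:co-rec-modul} and relax $\absn{\psi_j}$, $\absn{y_j}$ to the extremal norms to obtain
\begin{equation*}
  \absb{\absn{\tilde y_j} - \absn{y_j}}
  \le \frac{\epsilon}{2\,\pNormn{\Vek\psi}_{-\infty}
    \sqrt{\pNormn{\Vek y}_{-\infty}^2 \pNormn{\Vek\psi}_{-\infty}^2 - \epsilon}},
\end{equation*}
which is exactly the second summand of the claimed bound. For the phase term I would use $\absn{\e^{\I\tilde\alpha_j} - \e^{\I\alpha_j}} = 2\absn{\sin(\nicefrac{(\tilde\alpha_j-\alpha_j)}2)} \le \absn{\tilde\alpha_j - \alpha_j \Mod 2\uppi}$ and feed in \thref{lem:co-rec-phase} with the reference index chosen so that the largest modulus is fixed to phase zero, again relaxing the occurring moduli to $\pNormn{\Vek y}_{-\infty}$ and $\pNormn{\Vek\psi}_{-\infty}$. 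The prefactor is controlled from \eqref{eq:coeff-recover} via $\absn{\tilde\eta_{\tau(j,j)}} \le \absn{\eta_{\tau(j,j)}} + \epsilon$ together with $\eta_{\tau(j,j)} = \absn{c_j}^2 = \absn{y_j}^2\absn{\psi_j}^2$; the smallness of $\epsilon$ then gives $\absn{\tilde y_j} \le \sqrt 2\,\pNormn{\Vek y}_\infty$. Combining the two contributions and maximising over $j$ produces the stated estimate for $\pNormn{\tilde{\Vek y} - \Vek y}_\infty$.

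It remains to transfer the bound from $\Vek y$ to $\Vek x$. Because $\Vek y = \Mat S^*\Vek x$, the reconstructions satisfy $\Vek x = (\Mat S^*)^{-1}\Vek y$ and $\tilde{\Vek x} = (\Mat S^*)^{-1}\tilde{\Vek y}$, so that
\begin{equation*}
  \pNormn{\tilde{\Vek x} - \Vek x}_\infty
  \le \pNormn{(\Mat S^*)^{-1}}_\infty \, \pNormn{\tilde{\Vek y} - \Vek y}_\infty
  = \pNormn{\Mat S^{-1}}_1 \, \pNormn{\tilde{\Vek y} - \Vek y}_\infty,
\end{equation*}
where I use the identity $\pNormn{\Mat B^*}_\infty = \pNormn{\Mat B}_1$ for induced operator norms (conjugate transposition interchanges the row\-sum and column\-sum norms). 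Inserting the first estimate yields the second. The only genuinely delicate point is the bookkeeping in the phase term: both the prefactor $\absn{\tilde y_j}$ and the bound of \thref{lem:co-rec-phase} carry powers of $\absn{\psi_j}$ and $\absn{y_j}$, so one must decide which of these to cancel against the $\absn{\psi_j}^{-1}$ in \eqref{eq:coeff-recover} and which to relax to an extremal norm in order to land precisely on the coefficient $\pNormn{\Vek y}_\infty \pNormn{\Vek\psi}_\infty / (\pNormn{\Vek y}_{-\infty}^2 \pNormn{\Vek\psi}_{-\infty}^2)$; the chain of estimates carries some slack, so the constants have to be tracked carefully to match the statement.
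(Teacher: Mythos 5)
Your proposal is correct and follows essentially the same route as the paper: the paper's own proof simply invokes the technique of \thref{prop:total-error} (magnitude/phase decoupling via \thref{lem:co-rec-modul} and \thref{lem:co-rec-phase}, with the caveat that $\absn{y_k}$ and $\absn{\psi_k}$ need not equal the maximum norms since the phase is propagated from the coefficient $\bar y_k \psi_k$ largest in magnitude, so no cancellation occurs) and then transfers to $\Vek x$ via $\Vek x = (\Mat S^*)^{-1}\Vek y$ and $\pNormn{(\Mat S^{-1})^*}_\infty = \pNormn{\Mat S^{-1}}_1$, exactly as you do. The bookkeeping subtlety you flag at the end is precisely the point the paper's proof singles out, so nothing is missing.
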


\begin{proof}
  The statement follows using the same technique as for
  \thref{prop:total-error}.  Notice however that in the last estimate
  $\absn{y_k}$ and $\absn{\psi_k}$ would not have to correspond to
  $\pNormn{\Vek y}_\infty$ and $\pNormn{\Vek \psi}_\infty$
  respectively since the phase is propagated from the coefficient
  $\tilde c_k \approx \bar y_k \psi_k$ largest in magnitude.
  Therefore the maximum norms do not cancel out.  For the second part,
  exploit $\Vek x = (\Mat S^*)^{-1}\Vek y$ and 
  $\| (\Mat S^{-1})^*\|_{\infty} = \| \Mat S^{-1} \|_1$.  \qed
\end{proof}

\paragraph{Multiple sampling vectors}

Finally, we would like to discuss the sensitivity of the phase
propagation in the setting of \thref{thm:sig-sys:sampl-set}, where we
exploit spatiotemporal measurements with respect to several sampling
vectors $\Vek\phi_i$.  Here we first recover the partial spectra
$\tilde{\Lambda}_i =\{ \tilde{\lambda}_k : k \in \supp \psi_i\}$ up to
global phase and winding direction, then identify the order within the
partial spectra, and afterwards align these to find the complete
spectrum of $\Mat A$ with one unified global phase and winding
direction.  In this process an extra error will appear in the phase of
eigenvalues because of the phase propagation between the partial
spectra. Fortunately, the amplitude of the eigenvalues is not
affected.

To demonstrate the issue in more detail, let us -- for the
moment -- consider two partial spectra $\tilde\Lambda_0$ and
$\tilde\Lambda_1$ and assume
\begin{equation*}
  \absn{\arg(\tilde{\lambda}_{i,k}) - \arg( \lambda_{k}) \Mod 2 \uppi}
  \le \rho
\end{equation*}
if $\lambda_{k}$ is covered by $\tilde\Lambda_i$.  For simplicity, we
assume that the winding directions are already aligned.  If we now
propagate the phase from $\tilde\Lambda_0$ over $\tilde\lambda_{0,k}$
and $\tilde\lambda_{1,k}$ to $\tilde\Lambda_1$, then the phases in
$\tilde\Lambda_1$ have to be shifted by
$\arg(\tilde{\lambda}_{0,k})-\arg(\tilde{\lambda}_{1,k})$.  Since the
phase of $\tilde\lambda_{1,k}$ is already defective, the error within
$\tilde{\Lambda}_1$ may accumulate at most to $2\rho$.  If we want to
align the global phase of the entire spectrum, we may take the element
with the largest magnitude in $\tilde\Lambda_0$, look for the shortest
path over the partial spectra $\tilde\Lambda_i$ to $\tilde \lambda_j$,
and propagate the phase along this path.  The error of
$\arg(\tilde\lambda_j)$ may then accumulate at most to $[1+2(M-1)]\rho$, where
$M$ is the number of the employed spectra $\Lambda_i$.  A schematic
example of this procedure is shown in \autoref{fig:patch-phase-pro}.
For the phase of the transformed signal $\Vek y$, we may apply the
same procedure.

\begin{figure}
  \centering
  \includegraphics{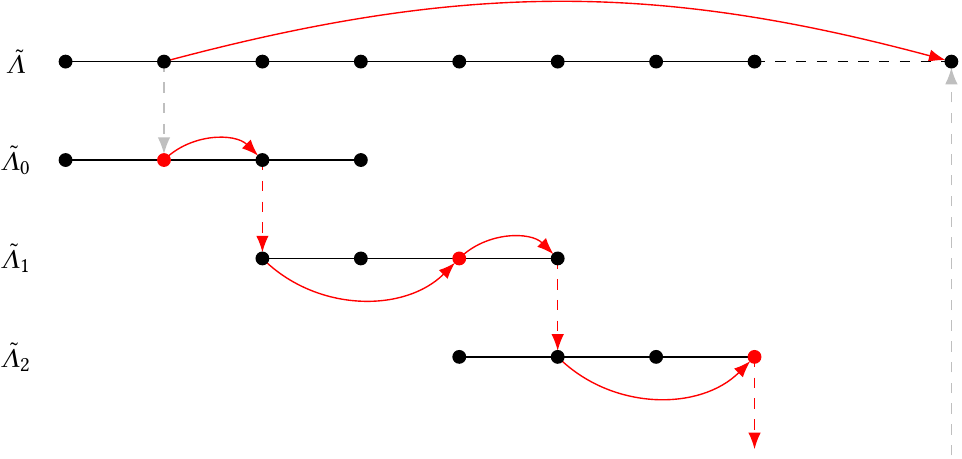}
  \caption{Schematic example for the propagation of the phase from
    some starting element over some path to another element.  The
    elements of $\tilde\Lambda_i$ with the largest magnitude are
    marked in red.  In each partial spectra, the phase error get worse
    by $2\rho$ at the most.}
  \label{fig:patch-phase-pro}
\end{figure}

\section{Numerical examples}
\label{sec:numerical-examples}

The constructive proofs of the uniqueness guarantees for phase
retrieval and system identification can immediately be implemented to
obtain numerical algorithms.  Because of the sensitivity of Prony's
method as corner stone of the proofs, these methods will however be
vulnerable to noise.  Nevertheless, we provide some small numerical
examples to accompany the theoretical results and to show that
simultaneous identification of system and signal is possible in
principle.  All numerical experiments have been implemented
in Julia\footnote{The Julia Programming Language -- Version 1.4.2
  (\url{https://docs.julialang.org})}.

\begin{example}[Prony's method]
  \label{ex:prony}
  First, we apply the approximated Prony method in
  \thref{alg:app-prony} to the complex setting.  For this, we
  generate exponential sums \eqref{eq:exp-sum} by choosing the
  coefficients and bases from a ring in the complex plane.  More
  precisely, the absolute values are drawn with respect to the uniform
  distributions $\abs{\eta_k} \sim \mathcal U ([\nicefrac18, 1])$ and
  $\abs{\beta_k} \sim \mathcal U ([\nicefrac12, 1])$ and the phases
  form $\mathcal U((-\uppi, \uppi])$ independently.  The mean maximal
  reconstruction errors for different numbers of addends $K$ and
  numbers of samples $L$.  The results over 5\,000 reconstructions are
  recorded in \autoref{tab:err-bases:noise-free} and
  \ref{tab:err-coeff:noise-free}.  For a small number of addends, the
  parameter are identified fairly well.  Increasing the number of
  addends however leads to a significant loss of accuracy.  To some
  degree, this may be compensated by employing more samples.  We
  repeat this experiment with small additive noise
  $\abs{e_k} \sim U([0, 10^{-10}])$ and
  $\arg(e_k) \sim \mathcal U((-\uppi, \uppi])$, see
  \autoref{tab:err-bases:noisy} and \ref{tab:err-coeff:noisy}.   \qed
\end{example}

\begin{table}[p]
  \centering\scriptsize
  \begin{tabular}{|c|cccccc|}
    \hline
    & \multicolumn{6}{c|}{Number of samples $L$}
    \\
    $K$
    & $2K +1$
    & $3K +1$
    & $4K +1$
    & $5K +1$
    & $8K +1$
    & $10K +1$
    \\ \hline
    5
    & $7.380 \cdot 10^{-12}$
    & $1.317 \cdot 10^{-12}$
    & $1.446 \cdot 10^{-12}$
    & $9.931 \cdot 10^{-13}$
    & $2.162 \cdot 10^{-13}$
    & $4.886 \cdot 10^{-13}$
    \\
    10
    & $1.212 \cdot 10^{-7}$
    & $1.822 \cdot 10^{-7}$
    & $4.340 \cdot 10^{-8}$
    & $1.526 \cdot 10^{-8}$
    & $4.081 \cdot 10^{-9}$
    & $5.598 \cdot 10^{-9}$
    \\
    15
    & $1.286 \cdot 10^{-3}$
    & $2.475 \cdot 10^{-4}$
    & $1.646 \cdot 10^{-6}$
    & $3.558 \cdot 10^{-5}$
    & $2.163 \cdot 10^{-6}$
    & $2.460 \cdot 10^{-6}$
    \\
    20
    & $1.503 \cdot 10^{-2}$
    & $5.406 \cdot 10^{-4}$
    & $7.727 \cdot 10^{-4}$
    & $3.951 \cdot 10^{-4}$
    & $2.998 \cdot 10^{-4}$
    & $2.325 \cdot 10^{-4}$
    \\ \hline
  \end{tabular}
  \caption{The mean of the reconstruction error $\pNormn{\Vek \beta -
      \tilde{\Vek \beta}}_\infty$ over 5\,000 experiments for
    different numbers of addends $K$ and samples $L$ in the noise-free
  setting, see Example~\ref{ex:prony}.}
  \label{tab:err-bases:noise-free}
\end{table}

\begin{table}[p]
  \centering\scriptsize
  \begin{tabular}{|c|cccccc|}
    \hline
    & \multicolumn{6}{c|}{Number of samples $L$}
    \\
    $K$
    & $2K +1$
    & $3K +1$
    & $4K +1$
    & $5K +1$
    & $8K +1$
    & $10K +1$
    \\ \hline
    5
    & $1.298 \cdot 10^{-10}$
    & $5.154 \cdot 10^{-11}$
    & $4.864 \cdot 10^{-11}$
    & $3.644 \cdot 10^{-11}$
    & $6.745 \cdot 10^{-12}$
    & $1.988 \cdot 10^{-11}$
    \\
    10
    & $3.517 \cdot 10^{-6}$
    & $6.538 \cdot 10^{-6}$
    & $6.285 \cdot 10^{-6}$
    & $5.281 \cdot 10^{-7}$
    & $1.483 \cdot 10^{-7}$
    & $2.763 \cdot 10^{-7}$
    \\
    15
    &  $2.194 \cdot 10^{-3}$
    & $1.403 \cdot 10^{-4}$
    & $1.193 \cdot 10^{-4}$
    & $2.193 \cdot 10^{-4}$
    & $6.814 \cdot 10^{-5}$
    & $6.406 \cdot 10^{-5}$
    \\
    20
    &  $1.860 \cdot 10^{-2}$
    & $2.040 \cdot 10^{-3}$
    & $2.445 \cdot 10^{-3}$
    & $1.503 \cdot 10^{-3}$
    & $2.021 \cdot 10^{-3}$
    & $1.405 \cdot 10^{-3}$
    \\ \hline
  \end{tabular}
  \caption{The mean of the reconstruction error $\pNormn{\Vek \eta -
      \tilde{\Vek \eta}}_\infty$ over 5\,000 experiments for
    different numbers of addends $K$ and samples $L$ in the noise-free
  setting, see Example~\ref{ex:prony}.}
  \label{tab:err-coeff:noise-free}
\end{table}

\begin{table}[p]
  \centering\scriptsize
  \begin{tabular}{|c|cccccc|}
    \hline
    & \multicolumn{6}{c|}{Number of samples $L$}
    \\
    $K$
    & $2K +1$
    & $3K +1$
    & $4K +1$
    & $5K +1$
    & $8K +1$
    & $10K +1$
    \\ \hline
    5
    & $1.481 \cdot 10^{-5}$
    & $7.640 \cdot 10^{-6}$
    & $5.188 \cdot 10^{-7}$
    & $1.942 \cdot 10^{-7}$
    & $3.555 \cdot 10^{-7}$
    & $3.335 \cdot 10^{-7}$
    \\
    10
    & $1.580 \cdot 10^{-2}$
    & $4.646 \cdot 10^{-3}$
    & $3.571 \cdot 10^{-3}$
    & $3.210 \cdot 10^{-3}$
    & $3.442 \cdot 10^{-3}$
    & $3.413 \cdot 10^{-3}$
    \\
    15
    & $9.528 \cdot 10^{-2}$
    & $2.016 \cdot 10^{-2}$
    & $1.719 \cdot 10^{-2}$
    & $1.570 \cdot 10^{-2}$
    & $1.685 \cdot 10^{-2}$
    & $1.290 \cdot 10^{-2}$
    \\
    20
    & $2.741 \cdot 10^{-1}$
    & $9.357 \cdot 10^{-2}$
    & $8.451 \cdot 10^{-2}$
    & $7.909 \cdot 10^{-2}$
    & $8.243 \cdot 10^{-2}$
    & $8.477 \cdot 10^{-2}$
    \\ \hline
  \end{tabular}
  \caption{The mean of the reconstruction error
    $\pNormn{\Vek \beta - \tilde{\Vek \beta}}_\infty$ over 5\,000
    experiments for different numbers of addends $K$ and samples $L$
    in the noisy setting $\abs{e_k} \sim U([0, 10^{-10}])$ and
    $\arg(e_k) \sim \mathcal U((-\uppi, \uppi])$, see
    Example~\ref{ex:prony}.}
  \label{tab:err-bases:noisy}
\end{table}

\begin{table}[p]
  \centering\scriptsize
  \begin{tabular}{|c|cccccc|}
    \hline
    & \multicolumn{6}{c|}{Number of samples $L$}
    \\
    $K$
    & $2K +1$
    & $3K +1$
    & $4K +1$
    & $5K +1$
    & $8K +1$
    & $10K +1$
    \\ \hline
    5
    & $2.680 \cdot 10^{-4}$
    & $2.120 \cdot 10^{-4}$
    & $1.215 \cdot 10^{-5}$
    & $3.500 \cdot 10^{-6}$
    & $1.419 \cdot 10^{-5}$
    & $6.576 \cdot 10^{-6}$
    \\
    10
    & $1.580 \cdot 10^{-2}$
    & $4.646 \cdot 10^{-3}$
    & $3.571 \cdot 10^{-3}$
    & $3.210 \cdot 10^{-3}$
    & $3.442 \cdot 10^{-3}$
    & $3.413 \cdot 10^{-3}$
    \\
    15
    & $9.304 \cdot 10^{-2}$
    & $3.209 \cdot 10^{-2}$
    & $3.271 \cdot 10^{-2}$
    & $2.968 \cdot 10^{-2}$
    & $3.093 \cdot 10^{-2}$
    & $2.804 \cdot 10^{-2}$
    \\
    20
    & $2.256 \cdot 10^{-1}$
    & $1.224 \cdot 10^{-1}$
    & $1.178 \cdot 10^{-1}$
    & $1.135 \cdot 10^{-1}$
    & $1.184 \cdot 10^{-1}$
    & $1.230 \cdot 10^{-1}$
    \\ \hline
  \end{tabular}
  \caption{The mean of the reconstruction error
    $\pNormn{\Vek \eta - \tilde{\Vek \eta}}_\infty$ over 5\,000
    experiments for different numbers of addends $K$ and samples $L$
    in the noisy setting $\abs{e_k} \sim U([0, 10^{-10}])$ and
    $\arg(e_k) \sim \mathcal U((-\uppi, \uppi])$, see
    Example~\ref{ex:prony}.}
  \label{tab:err-coeff:noisy}
\end{table}

\begin{table}
  \centering\scriptsize
  \begin{tabular}{|c|cccccc|}
    \hline
    & \multicolumn{6}{c|}{Index $k$ in time domain}
    \\
    & 0
    & 1
    & 2
    & 3
    & 4
    & 5
    \\ \hline
    $x$
    & -0.806\,494\,570\,188
    & 0.697\,047\,937\,358
    & 0.475\,340\,169\,748
    & -0.868\,496\,176\,947
    & -0.373\,776\,219\,367
    & 0.573\,125\,494\,692
    \\
    $\phi_1$
    & 0.299\,100\,737\,288
    & -0.067\,652\,854\,127
    & 0.223\,548\,074\,051
    & -0.419\,039\,372\,471
    & 0.398\,336\,559\,020
    & 0.439\,827\,094\,742
    \\
    $\phi_2$
    & -0.222\,947\,251\,005
    & 0.185\,111\,331\,800
    & 0.508\,076\,580\,285
    & -0.024\,006\,689\,074
    & 0.491\,191\,477\,978
    & -0.360\,304\,943\,116
    \\ \hline
  \end{tabular}
  \caption{The randomly generated unknown signal $\Vek x$ and the
    known measurement vectors $\Vek \phi_1$, $\Vek \phi_2$ in
    Example~\ref{ex:sym-phase-ret} satisfying the assumptions of
    Theorem~\ref{thm:sys-sig:real-case}.}
  \label{tab:sym-phase-ret:sig}
\end{table}

\begin{example}[Simultaneous signal \& system identification]
  \label{ex:sym-phase-ret}
  In this numerical example, we consider the recovery of real-valued
  signals and convolution kernels as discussed in
  \autoref{sec:simult-phase-system}.  The true, unknown kernel
  $\Vek a \in \BR^6$ is here chosen as
  \begin{equation*}
    \hat{\Vek a}
    \coloneqq
    \big( \cos(2 k) \bigr)_{k=-3}^2,
  \end{equation*}
  where the indices are considered modulo 6.  Besides the
  strictly, symmetrically decreasing kernel, the unknown signal
  $\Vek x \in \BR^6$ and the known measurement vectors
  $\Vek \phi_1, \Vek \phi_2 \in \BR^6$ have been randomly generated
  such that the requirements for the reconstruction are fulfilled,
  \ie\ $\Vek \phi_1$ and $\Vek \phi_2$ are pointwise independent in
  the frequency domain, and the assumption
  $\Re[\bar{\hat x}_k \hat \phi_{i,k}] \ne 0$ is satisfied for
  $k = 0, \dots, 5$, $i = 1,2$.  For reproducibility, the employed
  signals are shown in \autoref{tab:sym-phase-ret:sig}.  Choosing
  $L \coloneqq 4 d^2 + 1 = 145$ to encounter the numerical sensitivity
  of Prony's method, we now apply the procedure in the constructive
  proof of \thref{thm:sys-sig:real-case}.  The reconstructions
  $\tilde{\Vek a}$ and $\tilde{\Vek x}$ of the true signals $\Vek a$
  and $\Vek x$ are shown in \autoref{fig:sym-phase-ret:sig}.  Aligning
  the overall sign of $\Vek x$ and $\tilde{\Vek x}$, we are able to
  recover the unknown signals up to an error of
  $\pNormn{\hat{\Vek a} - \hat{\tilde{\Vek a}}}_\infty = 8.650 \cdot
  10^{-5}$ and
  $\pNormn{\Vek x - \tilde{\Vek x}}_\infty = 1.141 \cdot 10^{-3}$.
  The theoretical procedure behind \thref{thm:sys-sig:real-case} thus
  allows the simultaneous recovery of signal and kernel numerically at
  least for small instances.  \qed
\end{example}

\begin{figure}
  \centering
  \subfloat[Convolution kernel $\hat{\Vek a}$.]{
    \includegraphics{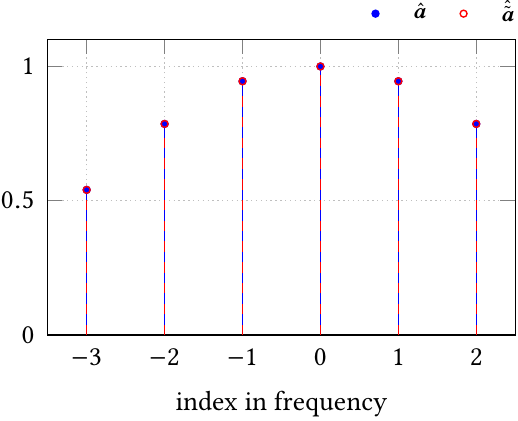}
  }
  \qquad
  \subfloat[Unknown signal $\Vek x$.]{
    \includegraphics{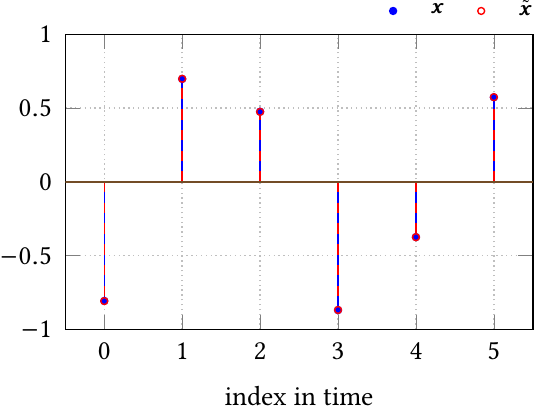}
  }
  \caption{The true and reconstructed signal and kernel in
    Example~\ref{ex:sym-phase-ret} by applying the procedure provided in
    Theorem~\ref{thm:sys-sig:real-case}.}
  \label{fig:sym-phase-ret:sig}
\end{figure}

\begin{example}[Multiple sampling vectors]
  \label{ex:mult-samp-vec}
  Finally, we consider the identification of\linebreak complex-valued
  signals and convolution kernels, \ie\
  $\Mat A \coloneqq \Circ \Vek a$, using multiple sampling vectors.
  For the experiment, the true but unknown signal $\Vek x \in \BC^{50}$ and
  kernel $\Vek a \in \BC^{50}$ have been randomly generated such that
  $\Vek x$ has a non-vanishing Fourier transform and $\Vek a$ is
  absolutely collision-free, see \autoref{fig:mult-samp-vec}.
  Further, we generate 47 sampling vectors $\Vek \phi_i \in \BC^{50}$
  such that $\supp \hat{\Vek \phi_i} = \{ i, \dots, i+3\}$.  Since the
  support of two consecutive sampling vectors is shifted by one, the
  generated sampling vectors allow index separation \eqref{eq:ind-sep}
  and phase propagation \eqref{eq:phase-prop}.  Additionally, we
  ensure that the winding direction determination property
  \eqref{eq:wind-dir-det} is satisfied for
  $i_1 = 0, i_1 = 1, k_1 = 1, k_2 = 2$.  Further, we employ for each
  sampling vector 65 samples, which is around twice the minimal
  required number to apply Prony's method.  Next, we apply the
  construction behind the proof of \thref{thm:sig-sys:sampl-set} line
  by line, where the procedure in the proof of
  \thref{thm:phase-eig-values} is used to identify the partial
  spectrum of $\Vek a$ with respect to $\Vek \phi_i$.  The recovered signal
  $\tilde{\Vek x}$ and kernel $\tilde{\Vek a}$ are shown in
  \autoref{fig:mult-samp-vec}.  Aligning the phase of the true and
  recovered vectors at the first component, we here observe the
  reconstruction errors
  $\pNormn{\hat{\Vek a} - \hat{\tilde{\Vek a}}}_\infty = 1.897 \cdot
  10^{-3}$ and
  $\pNormn{\hat{\Vek x} - \hat{\tilde{\Vek x}}}_\infty = 1.563 \cdot
  10^{-4}$.  As shown in this example, the techniques behind the
  theoretical proofs may be applied to recover signal and kernel from
  noise-free samples.  \qed
\end{example}

\begin{figure}[t]
  \centering
  \subfloat[Magnitude of the kernel in frequency.]{
    \includegraphics{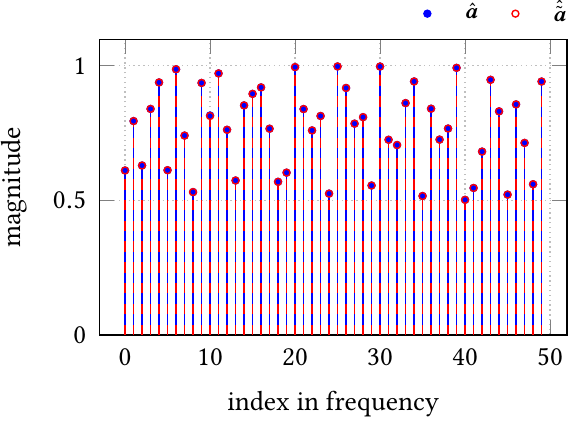}
  }
  \qquad
  \subfloat[Phase of the kernel in frequency.]{
    \includegraphics{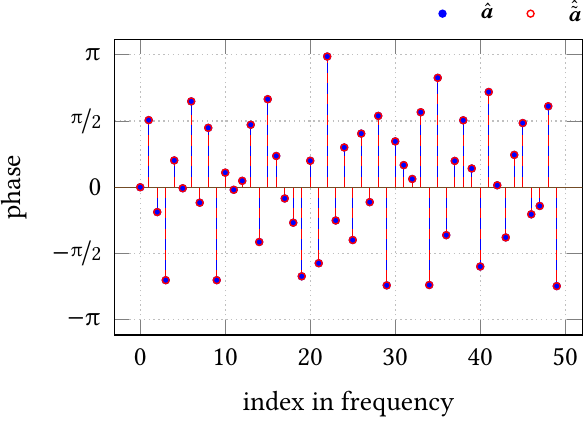}
  }
  \newline
  \subfloat[Magnitude of the signal in frequency.]{
    \includegraphics{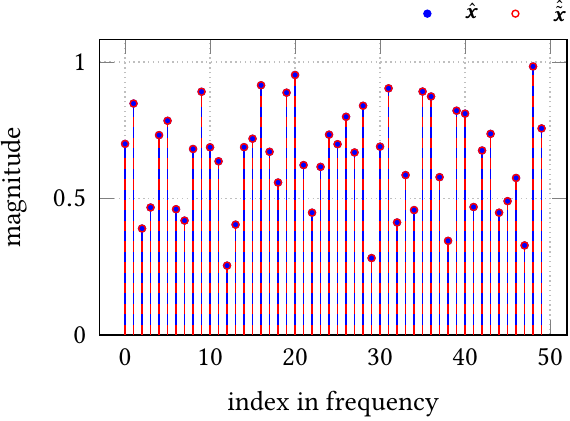}
  }
  \qquad
  \subfloat[Phase of the signal in frequency.]{
    \includegraphics{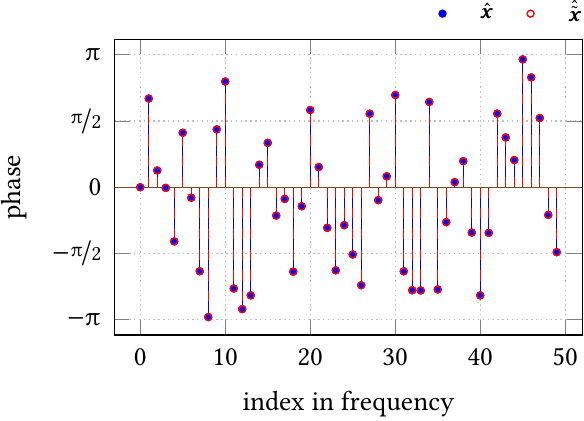}
  }
  \caption{The true and reconstructed signal and kernel in
    Example~\ref{ex:mult-samp-vec} by applying the procedure behind
    Theorem~\ref{thm:sig-sys:sampl-set}.}
  \label{fig:mult-samp-vec}
\end{figure}

\section{Conclusion}
\label{sec:conclusion}

Phase retrieval in dynamical sampling is a novel research direction
occurring a few years ago.  As for most phase retrieval problems, the
main issue is the ill-posedness especially emerging in the
non-uniqueness of the solution.  Besides the phase retrieval of the
unknown signal, we additionally identify the unknown involved operator
from a certain operator class.  We have shown that both -- phase
retrieval and system identification -- is in principle simultaneously
possible if the spectrum of the operator is (absolutely)
collision-free.  The employed conditions to ensure the uniqueness of
the combined phase and system identification hold for almost all
signals, spectra, and measurement vectors.  Our work horse has been
the approximate Prony method for complex exponential sums.  As a
consequence, all proofs are constructive and give explicit analytic
reconstruction methods.  Unfortunately, Prony's method is notorious
for its instability.  We have studied the sensitivity in more details
yielding error bounds that are interesting by themselves outside the
context of dynamical sampling.  The recovery error of phase and system
here centrally depends on the well-separation of the pairwise products
of the spectrum and how far the involved entities are away from zero.
Especially for high-dimensional instances the well-separation gets
worse and worse since the pairwise products start to cluster; so the
analytic reconstructions can only be applied to small instances or a
series of specially constructed sampling vectors numerically.  The
main contributions of this paper are the theoretical uniqueness
guarantees, where the question of a practical recovery methods remains
open for further research.  In particular for phase retrieval, it
would be interesting to adapt Prony's method to the occurring
quadratic structure or to replace it by a more suitable method.

\bibliographystyle{abbrv}
{\footnotesize \bibliography{literature}}

\end{document}